\newcommand{\Dim}{{\scriptsize \textsf{D}}}
\newcommand{\Zero}{\hat{0}}
\newcommand{\One}{\hat{1}}
\newcommand{\Int}{\mathrm{int}}
\title{
  Fluid Modeling and Boolean Algebra 
  for Arbitrarily Complex Topology
 % Physically Meaningful Regions %   of 
  in Two Dimensions}
\author{Qinghai Zhang\thanks{School of Mathematical Sciences,
  Zhejiang University,
  38 Zheda Road,
  Hangzhou, Zhejiang Province, 310027 China
  ({\tt qinghai@zju.edu.cn}).}
  \and
  Zhixuan Li\thanks{
  School of Mathematical Sciences,
  Zhejiang University,
  38 Zheda Road,
  Hangzhou, Zhejiang Province, 310027 China
  ({\tt zihinlai@163.com}).}
}
\begin{document}

\maketitle

% \begin{center}
% \textsf{Dedicated in humble homage to my mentor, Madam Yin, Ping.}
% \end{center}

\begin{abstract}
  We propose a mathematical model
 for fluids in multiphase flows
 in order to establish a solid theoretical foundation
 for the study of their complex topology,
 large geometric deformations,
 and topological changes such as merging. 
Our modeling space consists of
 regular open semianalytic sets with bounded boundaries,
 and is further equipped
 with constructive and algebraic definitions
 of Boolean operations.
Major distinguishing features of our model include 
 (a) topological information of fluids
 such as Betti numbers can be easily
 extracted in constant time, 
 (b) topological changes of fluids
 are captured by non-manifold points
 on fluid boundaries,
 (c) Boolean operations on fluids 
 correctly handle all degenerate cases
 and apply to arbitrarily complex topologies,
 yet they are simple and efficient
 in that they only involve
 determining the relative position of a point 
 to a Jordan curve
 and intersecting a number of curve segments.
Although the main targeting field
 is multiphase flows,
 our theory and algorithms
 may also be useful for related fields
 such as solid modeling, computational geometry,
 computer graphics, 
 and geographic information system.
% Our algorithms are implemented
%  and tested in a \texttt{C++} package
%  that supplements this paper.

% Due to their regularity,
%  Yin sets can be uniquely expressed
%  by posets of oriented Jordan curves,
%  the collection of which is referred to as the Jordan space.
% This uniqueness leads to an isomorphism
%  between the Yin space and the Jordan space,
%  which further reduces
%  the two-dimensional Boolean algebra on the Yin space
%  to one-dimensional problems in the Jordan space.
% The proposed Boolean operations are simple
%  in that these one-dimensional problems
%  are
%  of point-in-polygon 
%  finding intersections of  curves
 
% They are simple
%  in that it only involves curve intersection
%  and point in 
%  able to handle
%  arbitrarily complex topology of two-dimensional regions.
% As one of its distinguishing features, 
%  the proposed algorithm is able to handle
%  arbitrarily complex topology of two-dimensional regions.

% After defining a meet operation, a join operation,
%  and a complementation operation on the Jordan space,
%  we show that the Yin space and the Jordan space
%  are isomorphic Boolean algebras.
% The isomorphism is a mapping
%  that sends sets of one-dimensional Jordan curves
%  to two-dimensional regular open semianalytic sets.

%%% Local Variables:
%%% mode: latex
%%% TeX-master: "../YinSets2D"
%%% End:

%  LocalWords:  multiphase semianalytic topologies

\end{abstract}

\begin{keywords}
  Multiphase flows,
  fluid modeling,
  topological changes,
  Boolean algebra,
  Betti numbers,
  oriented Jordan curves,
  polygon clipping.
%  regular open semianalytic sets;
\end{keywords}

\begin{AMS}
  76T99, 65D18, 06E99
\end{AMS}

\section{Introduction}
\label{sec:intro}

Physically meaningful regions
 in the sense of homogeneous continua
 are ubiquitous,
 and their modeling is of fundamental significance
 in innumerable applications of science and engineering.
Traditionally,
 the modeling of two- and three-dimensional
 physical regions
 is the main subject of a mature research field
 called solid modeling
 \cite{requicha83:_solid,shapiro02:_solid}.
In comparison,
 modeling of fluids have always been avoided
 in the field of multiphase flows.
However, rapid advancements in the science of multiphase flows
 have been calling for such a model 
 so that complex phenomena
 such as those involving topological changes of fluids
 can be studied rigorously.

%The main objectives of this paper
In this paper, we aim to answer this need
 by introducing the notion of \emph{fluid modeling}
 in multiphase flows,
 analogous to solid modeling
 in computer-aided design (CAD).
We propose a topological space 
 for fluid modeling
 and further equip this space
 with natural algebraic structures
 in order to extract essential topological information
 and to perform simple and efficient
 Boolean operations.

In Sections \ref{sec:solid-modeling},
 \ref{sec:fluid-model-interf},
 and \ref{sec:boolean-operations},
 we motivate different aspects of fluid modeling
 and review previous efforts
 and relevant results.
We then list in Section \ref{sec:motiv-contr-this}
 a number of questions 
 as the more detailed targets of this work.

\subsection{Solid modeling}
\label{sec:solid-modeling}

What distinguishes solid modeling from
 similar disciplines such as computer graphics
 is its emphasis on \emph{physical fidelity},
 as evident in
 its underlying mathematical and computational principles.
This emphasis is natural:
% since during the past fifty years
 driven by the design, analysis, and manufacture
 of engineering systems,
 solid modeling must support
 the representation, visualization,
 exchange, interrogation, and creation
 of physical objects in CAD.

One common approach of solid modeling
 relies on point-set topology.
The classical modeling space
 proposed by Requicha and colleagues
 \cite{requicha77:_mathem_model_rigid_solid_objec,requicha78:_mathem_found_const_solid_geomet,requicha80:_repres_for_rigid_solid}
 consists of \emph{r-sets},
 which are bounded, closed, regular semianalytic sets in Euclidean spaces.
The regularity condition captures
 in solid continua the absence of
 low-dimensional features such as isolated gaps and points,
 and the semianalytic condition
 postulates that the boundary of a solid be locally well behaved;
 see Section \ref{sec:regularSets} for more details.
%This model of r-sets is still shaping the field nowadays.

The other common approach in solid modeling
 is \emph{combinatorial}, in the sense of
 \emph{cell complexes} in algebraic topology
 \cite{munkres84:_elemen} \cite{saveliev16:_topol_illus}.
Complex objects are viewed in terms of
 primitive building blocks called \emph{cells},
 thus it is not the constituting cells 
 but their combinational informations
 that describe the physical object.
Take simplicial complexes for example,
 a \emph{$k$-cell} is a $k$-simplex
 in the Euclidean space $\mathbb{R}^k$,
 and many cells of different dimensions 
 are glued together
 to form an $n$-dimensional simplicial complex
 by requiring 
 that any adjacent pair of $k$-cells
 be attached to each other along a $(k-1)$-cell
 for each $k=1,\ldots, n$.
The adjacency of $k$-cells is
 encoded in the \emph{$k$th boundary operator} $\partial_k$,
 which maps each $k$-cell
 to an element in the \emph{$(k-1)$-chain} $C_{k-1}$,
 a group of formal sums of $(k-1)$-cells.
If we concatenate the chain groups
 with the boundary operators,
 we obtain a \emph{chain complex}, 
 \begin{equation}
   \label{eq:chainComplex}
   \begin{tikzcd}[column sep=2em]
 %    0  \ar{r}{\partial_{N+1}} &
     C_n \ar{r}{\partial_{n}}
     & \cdots \ar{r}{\partial_{k+1}}
     & C_k \ar{r}{\partial_{k}}
     & \cdots \ar{r}{\partial_{1}}
     & C_0
%     \ar{r}{\partial_{0}} & 0
   \end{tikzcd}, 
 \end{equation}
 where each boundary operator is a group homomorphism.
This chain complex is all we need for
 mathematical modeling and computer representation
 of any $n$-dimensional solid!
% Consequently,
%  this approach is very amenable to algorithm design.
As a prominent advantage, 
 key topological quantities,
 such as %the Betti numbers that indicate
 the number of connected components
 and the number of holes,
 can be systematically computed from the chain complex.
The cost of this computation, however,
 can be substantial \cite{kaczynski04:_comput_homol}.

Thanks to the fact that
 any r-set can be represented by a simplicial complex
 as accurately as one wishes,
the point-set approach and the combinatorial approach
 are seamlessly consistent %with each other
 \cite{requicha80:_repres_for_rigid_solid}.
Thus we can use these two models interchangeably,
 at least theoretically.
%On the other hand,
This consistency also forces
 an $n$-dimensional r-set %in $\mathbb{R}^n$
 to be closed; 
 otherwise a boundary operator
 in (\ref{eq:chainComplex})
 may have a range outside of the chain complex.
% the topological space of the simplicial complex.

\subsection{Fluid modeling and interface tracking (IT) in multiphase flows}
\label{sec:fluid-model-interf}

In dramatic comparison to the aforementioned 
 research on solid modeling,
 efforts on geometric modeling of fluids are rare:
% In numerically simulating multiphase flows
%  with sharp interfaces, 
%  the common idea of boundary representation in solid modeling
%  is also very common: 
%  fluid phases are modeled by tracking their interface.
 mathematical models and computer algorithms
 have been deliberately designed such that
 \emph{geometric} modeling of fluids is avoided
 in numerically simulating multiphase flows. 
In the volume-of-fluid (VOF) method \cite{Hirt.Nichols_1981_volume}, 
 a deforming fluid phase $\mathsf{M}$
 is represented by a \emph{color function} $f(\mathbf{x},t)$,
 of which the value is 1 if there is $\mathsf{M}$ at $({\bf x},t)$
 and 0 otherwise;
 then the fluid phase at time $t$ is represented as a moving point set
 $\mathcal{M}(t):=\{\mathbf{x}: f(\mathbf{x},t)=1\}$.
% \begin{equation}
%  \label{eq:colorFunction}
%   f(\mathbf{x},t) := 
%   \left \{
%   \begin{array}{cc}
%     1 & \mathrm{if\ there\ is\ \mathsf{M}\ at\ } $({\bf x},t)$,
%     \\
%     0 & \mathrm{otherwise},
%   \end{array}
%   \right.
% \end{equation}
% and represent the fluid phase at time $t$ as a moving point set
% \begin{equation}
%   \label{eq:materialArea}
%   \mathcal{M}(t):=\{\mathbf{x}: f(\mathbf{x},t)=1\}.
% \end{equation}
Either the scalar conservation law
 \begin{equation}
   \label{eq:SCL}
   \frac{\partial f}{\partial t}+ \nabla\cdot(f \mathbf{u}) = 0
 \end{equation}
 or the advection equation
 \begin{equation}
   \label{eq:advection}
   \frac{\partial f}{\partial t}+ \mathbf{u}\cdot\nabla f = 0 
 \end{equation}
 is solved to recover the boundary of ${\cal M}$
 at subsequent time instants.
In the level-set method \cite{osher88:_front_propag_curvat_speed},
 the boundary of a fluid phase is represented
 as the zero isocontour of a signed distance function $\phi$,
 and once again the region of the fluid phase
 is recovered by numerically solving
 either (\ref{eq:SCL}) or (\ref{eq:advection}) on $\phi$.
In the front tracking method
 \cite{tryggvason01:_front_track_method_comput_multip_flow},
 the boundary of a fluid phase
 is represented by connected Lagrangian markers;
 tracking the fluid phase is then reduced
 to tracking these markers
 via numerically solving ordinary differential equations.
%As a key observation on the above three methods,
In all of these IT methods,
 geometric problems in deforming fluids with sharp interfaces 
 are converted to
 numerically solving differential equations.
% In particular,
%  none of them models the topology of the fluid phase.
With topological information discarded,
 this conversion largely reduces
 the complexity of IT
 both theoretically and computationally; 
 this is a main reason
 for successes of the aforementioned IT methods.
During the past forty years,
 these IT methods have been extremely valuable
 in studying multiphase flows.

As the science of multiphase flows moves towards
 more and more complex phenomena,
 higher and higher expectations are imposed on IT.
%We only make three points here.
First, 
 the wider and wider spectrum
 of relevant time scales and length scales
 in mainstream problems
 demands that IT methods
 be more and more accurate and efficient.
Second,
 the tight coupling of interface to ambient fluids
% in many significant problems
 necessitates accurate estimation of derived geometric quantities
 such as curvature
 and unit normal vectors.
Third,
 topological changes of a fluid phase
 such as merging
 exhibit distinct behaviors
 for different regimes of the Weber number
 and other impact parameters
 \cite{qian97:_regim},
 hence it is not enough to handle topological changes
 solely from the interface locus and the velocity field.
% as it is done in current IT methods.
For these problems, 
 an IT method should also take as its input
 a policy that describes how the interface
 shall evolve at the branching time and place of topological changes.

Despite their tremendous successes,
 current IT methods have a number of limitations
 in answering the aforementioned challenges of multiphase flows.
First,
 most methods are at best second-order accurate
 \cite{zhang13:VOFadvection}.
Second,
 the IT errors put an upper
 limit on the accuracy of estimating
 curvature and unit vectors.
It is shown in \cite{zhang17:_hfes} that,
 for a second-order method,
 its error of curvature estimation 
 is proportional to $\sqrt{\epsilon_{p}}$
 where $\epsilon_{p}$ denotes a norm of IT errors.
In other words, 
 the number of accurate digits one gets
 in curvature estimation is at best
 half of that in the IT results.
Third,
 the avoidance of geometric modeling of fluids
 renders it highly difficult
 to treat topological changes rigorously.
When merging and separation happen, 
 front-tracking methods have to resort
 to ``surgical'' operations
 that are short of theoretical justification.
VOF methods and level-set methods
 have no special procedures for topological changes;
 this is often advertised as an advantage.
However,
 by using this ``automatic'' treatment, 
 an application scientist has \emph{no control}
 over the evolution of an interface
 that undergoes topological changes:
 the evolution is determined
 not by the physics,
 but by particularities of numerical algorithms \cite{zhang13:VOFadvection,zhang18:_cubic_mars_method_fourt_sixth}.
Clearly, this disadvantage is a consequence of avoiding
 the geometric and topological modeling of fluids.
 
% Furthermore,
%  for VOF and level-set methods
%  it is very difficult 
%  to insert the relevant physics into the numerical procedures
%  because the formulations
%  from the velocity field
%  and particularities of the numerical method.

In this work,
 we aim to \emph{establish a theoretical foundation for fluid modeling}.
 % to answer the pressing challenges
 % in the study of multiphase flows.
To prevent reinventing the wheel,
 we have tried to utilize the wealth of solid modeling,
 but found that none of the two main approaches
 in Section \ref{sec:solid-modeling}
 is adequate for fluid modeling.
Topology computing
 based on cell complexes involves much machinery,
 yet its efficiency may not be acceptable;
 nor are the r-sets suitable for fluid modeling.
% for several reasons.
First, the requirement of r-sets being closed
 is not amenable to numerical analysis of IT
 methods \cite{zhang13:VOFadvection}.
Second,
 topological changes create on the fluid boundary
 a special type of non-manifold points, 
 such as the $q$ points in Figure \ref{fig:YinSets},
 which cause non-uniqueness of boundary representations,
 c.f. Figure \ref{fig:boundaryDecompositions}.
This non-uniqueness degrades isomorphisms
 to homomorphisms
 in the association of algebraic structures
 with elements in the topological modeling space.
Third,
 r-sets can be modified to
% as a modeling space of physically meaningful regions
 yield a new Boolean algebra
 whose implementation is much simpler and more efficient.
 % can be much more efficient
 % if we slightly modify the r-sets.

% In this work,
%  the major concern is a rigorous, accurate, and efficient method 
%  for handling topological changes of fluids
%  in two stages:
%  accurately detecting the space-time location of a merge/separation
%  and evolving the interface.
% While the second stage depends mostly
%  on the physics of the fluids,
%  the first stage requires
%  a sharp-interface geometric model of the fluids.
% In addition,
%  at the time instant when topological changes just occur,
%  special points appears on the interface
%  and makes the interface not a manifold;
%  see the $q$ points in Figure \ref{fig:YinSets}. 
% This special yet important case
%  creates a problem for the modeling space of r-sets, 
%  because the closeness of the r-sets
%  prevents uniqueness of boundary representations;
%  see Figure \ref{fig:boundaryDecompositions}.
% Hence we need to modify the r-sets.

\subsection{Boolean operations}
\label{sec:boolean-operations}

The operations to be performed on the modeling space
 are an indispensable part of the modeling process;
 after all, a major purpose of modeling
 is to answer questions on the objects being modeled. 
Hence theoretically
 a modeling space is shaped
 by primary cases of queries.
Computationally,
 these operations %on the topological space
 should be defined algebraically and constructively
 so that they furnish realizable algorithms
 that finish in reasonable time.

We are interested in Boolean operations
 on physically meaningful regions
 with arbitrarily complex topologies.
This interest follows naturally
 from the motivations in Section \ref{sec:fluid-model-interf}.
% of fluid modeling in multiphase flows.
First,
 we have shown that algorithms
 for clipping splinegons with a linear polygon
 can improve the IT accuracy by
 many orders of magnitudes
 \cite{zhang18:_cubic_mars_method_fourt_sixth}.
Second, for coupling an IT method to
 an Eulerian main flow solver,
 regions occupied by the fluid inside
 fixed control volumes
 are needed to define averaged values
 and to construct stencils
 for approximating spatial operators
 with linear combinations of these averaged values.
Third,
 in handling topological changes,
 the emerging time and sites
 of non-manifold points
 need to be detected on the fluid boundary
 before we are able to decide how to evolve it.
% so that we can treat them.
This detecting problem %via Newton iteration
 requires calculating intersections
 of multiple regions inside a single control volume.
 
Boolean operations on polygons
 are an active and intense research topic
 in many related fields
 such as computational geometry,
 computer graphics, CAD, 
 and geographic information system (GIS).
In particular,
 physically meaningful regions in GIS
 such as parks, roads, and lakes are represented by polygons
 and their Boolean operations are essential for extracting information
 and answering queries.
Consequently,
 there exist numerous papers on this topic;
 see, e.g., 
 \cite{sutherland74:_rentr,liang83,vatti92,greiner98:_effic,orourke98:_comput_geomet_in_c,liu07,berg08:_comput_geomet,simonson10:_indus,martinez13:_boolean} 
 and references therein.
However,
 many current algorithms are subject to strong restrictions
 on operand polygons such as convexity,
 simple-connectedness,
 and no self-intersections.
In addition,
 most algorithms fail for degenerate cases
 such as a vertex of a polygon being
 on the edge of the other polygon;
 these degenerate scenarios,
 nonetheless, are at the core
 of characterizing and treating topological changes.
Therefore, current Boolean algorithms
 are not suitable for fluid modeling.

As another main reason for their lack of applicability
 in multiphase flows,
 very few of current Boolean algorithms
 have a solid mathematical foundation,
 and those that do have other notable drawbacks.
For example,
 Boolean algorithms based on cell complexes
 \cite{peng05:_boolean,rivero00:_boolean}
 seem to be inefficient for complex topologies.
Those based on Nef polyhedra
 \cite{nef78:_beitr_zur_theor_polyed,bieri95:_nef_polyh,hachenberger07:_boolean_operat_selec_nef_compl}
 have an elegant theoretical foundation
 and is applicable to arbitrarily complex topologies,
 but they appear as an overkill for fluid modeling
 in that many elements in the modeling space of Nef polyhedra
 do not have counterparts in multiphase flows.
In addition,
 the corresponding algorithms and data structures
 are complicated and difficult to implement.
For both types of algorithms, 
 computing the topological information
 such as Betti numbers
 would be very time-consuming.

\subsection{Motivations and contributions of this work}
\label{sec:motiv-contr-this}

Methods that couple elementary concepts or tools
 from multiple disciplines
 often perform surprisingly well.
For fluid-structure interactions,
 a recent approach called isogeometric analysis
 \cite{cottrell09:_isogeom_analy,bazilevs13:_comput_fluid}
 has been increasingly popular,
 and much of its success is due to 
 the integration of finite element methods
 with highly accurate (and sometimes exact) 
 solid modeling in CAD. 
In our previous work,
 we have adopted a similar guiding principle
 to integrate IT with
 a topological space for fluid modeling
 \cite{zhang16:_mars}.
The resulting generic framework, called MARS,
 furnishes new tools for analyzing current IT methods
 \cite{zhang13:VOFadvection}
 and leads to a new IT method
 and a new curvature-estimation algorithm
 that are more accurate than current methods
 by many orders of magnitudes
 \cite{zhang17:_hfes,zhang18:_cubic_mars_method_fourt_sixth}.
% Further coupling of MARS
%  with a suitable finite-volume method
%  such as that in \cite{zhang16:_GePUP}
%  could lead to a 
%  fourth- and high-order numerical method
%  for simulating multiphase flows
%  with complex toplogies.

In recognition of the potentially large
 benefits of integrating fluid modeling
 with multiphase flows and 
 in view of the discussions in previous subsections,
 we list a number of questions as the driving forces
 behind this work.
 \begin{enumerate}[(Q-1)]
 \item Can we propose a generic topological space
   that appropriately models physically meaningful regions
   across multiple research fields
   such as solid modeling, GIS, and multiphase flows? 
 \item Can we find a simple representation scheme
   for elements in the modeling space
   to facilitate geometric and topological queries?
 \item Can we design simple and efficient
   Boolean operations that correctly handle
   all degenerate cases?
 \item In particular,
   can we provide theoretical underpinning
   and algorithmic support for handling topological changes
   of moving regions?
 \item Meanwhile, can we extract topological information
   such as Betti numbers with optimal complexity?
 \end{enumerate}

In this paper, 
 we provide positive answers to all of the above questions.
\mbox{(Q-1)} is answered in Section \ref{sec:repr-yin-sets},
 where physically meaningful regions
 are modeled by a topological space, called the Yin space,
 which consists of regular open semianalytic sets
 with bounded boundaries.
These conditions capture fluid features
 that are commonly relevant
 in multiphase flows, solid modeling, and GIS.
Furthermore, Yin sets are
 defined in terms of computable \emph{mathematical} properties
 and are thus independent of any particular representation
 or individual application. 
As such, 
 this modeling space serves as a bridge
 between multiphase flows and the other fields
 that emphasize geometry and topology.
As our answer to (Q-2),
 each Yin set
 can be uniquely expressed 
 as the result of finite Boolean operations
 on interiors of oriented Jordan curves.
This uniqueness leads to
 an isomorphism from the Yin space
 into the Jordan space,
 a collection of certain posets of oriented Jordan curves,
 and this isomorphism reduces
 Boolean algebra on the two-dimensional Yin space
 to one-dimensional routines
 in the Jordan space,
 namely locating a point relative to a simple polygon
 and finding intersections of curve segments.
This is our answer to \mbox{(Q-3)};
 see Section \ref{sec:boolean-algebra-yin}.

In addressing (Q-4),
%in Sections \ref{sec:repr-yin-sets} and \ref{sec:boolean-algebra-yin}, 
 we pay special attentions to
 issues related to non-manifold points on the fluid boundary,
 such as characterizing topological changes 
 with improper intersections of curves
 and dividing closed curves at these improper intersections
 to ensure correctness of Boolean operations.
However, we emphasize that,
 in both our theory and our algorithms, 
 non-manifold points of topological changes
 are treated not as an anomaly,
 but as a natural consequence of
 capturing the physical meaningfulness of fluids
 with the mathematical conditions
 that constitute the notion of Yin sets.
This is a major advantage of the Yin sets over the r-sets.

As another prominent feature of our theory,
 the number of connected components in any bounded Yin set
 is simply the number of positively oriented Jordan curves
 in its boundary representation,
 and the number of holes in a component
 is the number of negatively oriented Jordan curves
 in the boundary representation of that component.
Since these numbers are returned in $O(1)$ time,
 our answer to (Q-5) is of optimal complexity.

The rest of this paper is organized as follows.
In Section \ref{sec:prel-notat},
 we introduce prerequisites and notation.
In Section 3, we propose Yin sets
 as our fluid modeling space
 and study its topological properties.
In Section 4,
 we design Boolean operations on the Yin space
 in a way so that corresponding algorithms can be implemented
 by straightforward orchestration
 of the definitions.
Utilizing the Bentley-Ottmann paradigm of plane sweeping
 \cite{bentley79:_algor}
 in calculating intersections of curve segments, 
 our current implementation of the Boolean operations
 is close to optimal complexity.
A number of fun tests are given in Figure \ref{fig:panda-mickey} 
 to illustrate the Boolean algebra.
Finally, we draw the conclusion
 and discuss several research prospects
 in Section \ref{sec:conclusion}.

%%% Local Variables: 
%%% mode: latex
%%% TeX-master: "../YinSets2D"
%%% End: 

%  LocalWords:  semianalytic computable combinatorial simplices Betti
%  LocalWords:  simplicial homomorphism multiphase topologies poset
%  LocalWords:  isomorphisms homomorphisms Eulerian isomorphism

\section{Preliminaries}
\label{sec:prel-notat}

% \section{A Mathematical Modeling Space
%  Appropriate for IT Analysis}
% \label{sec:preliminaries}

In this section
 we collect relevant definitions and theorems 
 that form the \emph{algebraic} foundation
 of this work.
Some notations introduced here
 will be repeatedly used in subsequent sections.

\subsection{Partially ordered sets}
\label{sec:posets}

The Cartesian product of a nonempty set ${\cal A}$
 with itself $n$ times is denoted
 by ${\cal A}^n$;
 in particular, ${\cal A}^0=\{\emptyset\}$.
An \emph{$n$-ary relation on} ${\cal A}$ is
 a subset of ${\cal A}^n$;
 if $n=2$ it is called a \emph{binary relation}.
A given binary relation ``$\sim$'' on a set ${\cal A}$
 is said to be an \emph{equivalence relation} 
 if and only if
 it is \emph{reflexive} ($a\sim a$),
 \emph{symmetric} ($a\sim b\ \Rightarrow\ b\sim a$),
 and \emph{transitive} ($a\sim b, b\sim c \ \Rightarrow\ a\sim c$)
 for all $a, b, c \in {\cal A}$.
A binary relation ``$\le$'' defined on a set ${\cal A}$
 is a \emph{partial order} on ${\cal A}$
 if and only if
 it is reflexive ($a\le a$),
 \emph{antisymmetric} ($a\le b, b\le a\ \Rightarrow\ b=a$),
 and transitive ($a\le b, b\le c \ \Rightarrow\ a\le c$)
 for all $a, b, c \in {\cal A}$.

%  \begin{definition}[Partial order and poset]
%    A binary relation ``$\le$'' defined on a set ${\cal A}$
%     is a \emph{partial order} on ${\cal A}$
%     if the following conditions hold
%     for all $a,b,c\in{\cal A}$,
% %    \begin{enumerate}[(PO-1)]
%     \begin{itemize}
%     \item reflexivity: $a\le a$;
%     \item antisymmetry: $a\le b$ and $b\le a$ imply $a=b$;
%     \item transitivity: $a\le b$ and $b\le c$ imply $a\le c$.
%     \end{itemize}
%  A nonempty set with a partial order on it
%   is called a \emph{partially ordered set},
%   or %more briefly 
%   a \emph{poset}.
%  \end{definition}

A nonempty set ${\cal A}$ with a partial order $\le$ on it
  is called a \emph{partially ordered set},
  or more briefly a \emph{poset}.
Two elements $a,b\in{\cal A}$
 are \emph{comparable} if either $a\le b$ or $b\le a$;
 otherwise $a$ and $b$ are \emph{incomparable}.
If all $a,b\in{\cal A}$ are comparable by $\le$,
 then ``$\le$'' is a \emph{total order} on ${\cal A}$
 and ${\cal A}$ is a \emph{chain} or linearly-ordered set.
For examples,
 $\mathbb{R}$ with the usual order of real numbers
 is a chain;
 %$\mathbb{P}({\cal A})$, 
 the \emph{power set} of ${\cal A}$,
    i.e. the set of all subsets of ${\cal A}$,
   with the subset relation ``$\subseteq$''
 is a poset but not a chain.
The notation $a\ge b$ means $b\le a$,
 and $a<b$ means both $a\le b$ and $a\ne b$.

 \begin{definition}[Covering relation]
   \label{def:covering}
   Let ${\cal A}$ denote a poset and $a,b\in{\cal A}$.
   We say $b$ \emph{covers} $a$
    and write $a\prec b$ or $b\succ a$
    if and only if $a<b$
    and no element $c\in{\cal A}$ satisfy $a<c<b$.
 \end{definition}

 % \begin{definition}[Isomorphism of posets]
 %   \label{def:IsomorphismOfPosets}
 %   Two posets ${\cal P}$ and ${\cal Q}$
 %    are \emph{isomorphic}, written ${\cal P}\cong {\cal Q}$,
 %    if there exists
 %    an \emph{order-preserving} bijection
 %    $\phi: {\cal P}\rightarrow {\cal Q}$
 %    whose inverse is also order-preserving,
 %    i.e. $a\le b$ in ${\cal P}$
 %    if and only if $\phi(a)\le\phi(b)$ in ${\cal Q}$.
 % \end{definition}

 Most concepts on the ordering of $\mathbb{R}$
  make sense for posets.
 Let ${\cal A}$ be a subset of a poset ${\cal P}$.
 An element $p\in {\cal P}$ is an \emph{upper bound} of ${\cal A}$
  if $a\le p$ for all $a\in {\cal A}$.
 $p\in {\cal P}$ is the \emph{least upper bound} of ${\cal A}$,
  or \emph{supremum} of ${\cal A}$ ($\sup{\cal A}$)
  if $p$ is an upper bound of ${\cal A}$,
  and $p\le b$ for any upper bound $b$ of ${\cal A}$. 
Similarly we can define the concepts of a \emph{lower bound}
 and the \emph{greatest lower bound} of ${\cal A}$
 or the \emph{infimum} of ${\cal A}$ ($\inf{\cal A}$).
% The \emph{closed interval} $[a,b]$ is defined
%  as $[a,b]=\{c\ |\ a\le c\le b\}$
%  and the \emph{open interval} $(a,b)=\{c\ |\ a< c< b\}$.

 \begin{definition}[Lattice as a poset]
   \label{def:LatticeAsPoset}
   A \emph{lattice} is a poset ${\cal L}$
    satisfying that, for all $a,b\in{\cal L}$,
    both sup$\{a,b\}$ and inf$\{a,b\}$ exist in ${\cal L}$.
 \end{definition}

%Definition \ref{def:LatticeAsPoset}
% The above definition has a geometric flavor,
%  in the next subsection we turn to an algebraic viewpoint.

\subsection{Distributive Lattices}
\label{sec:distributeLattices}

An \emph{$n$-ary operation on} ${\cal A}$
 is a function $f: {\cal A}^n\rightarrow {\cal A}$
 where $n$ is the \emph{arity} of $f$.
A \emph{finitary} operation $f$ is an $n$-ary operation
 for some nonnegative integer $n\in \mathbb{N}$.
$f$ is nullary (or a constant) if its arity is zero,
 i.e. it is completely determined by
% the image $f(\emptyset)$ of
 the only element $\emptyset\in {\cal A}^0$,
 hence
 a nullary operation $f$ on ${\cal A}$
 can be identified with the element $f(\emptyset)$;
 for convenience
 it is regarded as an element of ${\cal A}$.
An operation on ${\cal A}$
 is \emph{unary} or \emph{binary}
 if its arity is 1 or 2, respectively.

 \begin{definition}[Universal algebra]
   A \emph{algebra}
    is an ordered pair $\mathbf{A}:=({\cal A}, {\cal F})$
    where ${\cal A}$ is a nonempty set
    and ${\cal F}$ a family of finitary operations on ${\cal A}$.
   The set ${\cal A}$ is
    the \emph{universe} or the \emph{underlying set} of $\mathbf{A}$
    and ${\cal F}$ the \emph{fundamental operations} of $\mathbf{A}$.
 \end{definition}

An algebra is \emph{finite}
 if the cardinality of its universe is bounded.
When ${\cal F}$ is finite, say ${\cal F}=\{f_1, f_2, \cdots, f_k\}$,
 we also write $\mathbf{A}=({\cal A}, f_1, f_2, \cdots, f_k)$
 with the operations sorted by their arities in descending order.
As a common example,
 a \emph{group} is an algebra of the form
 $\mathbf{G}=({\cal G}, \cdot, ^{-1}, 1)$
 where $\cdot, ^{-1}, 1$ 
 are a binary, a unary, and a nullary operations on ${\cal G}$,
 respectively.
 % a \emph{ring} an algebra of the form
 % $\mathbf{R}=({\cal R}, +, \cdot, -, 0)$
 % where $+, \cdot, -, 0$ are two binary, a unary,
 % and a nullary operations on ${\cal R}$, respectively.

 \begin{definition}[Lattice as an algebra]
   \label{def:LatticeAsAlgebraicStruct}
   A \emph{lattice} is an algebra $\mathbf{L}:=({\cal L}, {\cal F})$
    where ${\cal F}$ contains two binary operations $\vee$ and $\wedge$
    (read ``join'' and ``meet'' respectively)
    on ${\cal L}$ that satisfy the following axiomatic identities
    for all $x,y,z\in {\cal L}$,
    \begin{enumerate}[({LA}-1)]
    \item commutative laws: $x\vee y = y\vee x$,
      $x\wedge y = y\wedge x$;
    \item associative laws: $x\vee (y\vee z) = (x\vee y)\vee z$,
      $x\wedge (y\wedge z) = (x\wedge y)\wedge z$;
    \item absorption laws: $x= x \vee (x \wedge y)$,
      $x= x \wedge (x \vee y)$.
    \end{enumerate}

 \end{definition}

Sometimes the following idempotent laws
  are also included in the definition of a lattice
  although they can be derived
  from the above three axioms,
\begin{equation}
  \label{eq:idempotentLaws}
  x\vee x =x, \qquad x\wedge x =x.
\end{equation}

A lattice defined as a poset
 can be converted to an algebra
 by constructing the binary operations
 as $a\vee b=\sup\{a,b\}$ and $a\wedge b=\inf\{a,b\}$;
 the converse case can also be achieved
 by defining the partial order
 as $a\le b$ $\Leftrightarrow$ $a=a\wedge b$.
Hence
 \textsc{Definition} \ref{def:LatticeAsPoset}
 and \textsc{Definition} \ref{def:LatticeAsAlgebraicStruct}
 are equivalent.
% \cite[p. 6]{burris81:_cours_univer_algeb}.

 \begin{definition}
   \label{def:boundedLattice}
   A \emph{bounded lattice}
    is an algebra $({\cal L}, {\cal F})$
    where ${\cal F}$ contains
    binary operations $\vee, \wedge$
    and nullary operations $\Zero, \One$
    so that $({\cal L}, \vee, \wedge)$ is a lattice
    and, $\forall x \in {\cal L}$,
    \begin{equation}
      \label{eq:boundedLattice}
      x\wedge \Zero=\Zero,\qquad x\vee \One=\One.
    \end{equation}
 \end{definition}

The boundedness in the above definition
 is best understood from the poset viewpoint:
 $\Zero\le x$
 and $x\le \One$ for all $x\in {\cal L}$.

 \begin{definition}
   \label{def:distributiveLattice}
   A \emph{distributive lattice} is a lattice
    which satisfies either
    of the distributive laws
    \begin{equation}
      \label{eq:distributiveLaws}
      x\wedge (y\vee z) = (x\wedge y)\vee (x\wedge z),
      \qquad
      x\vee (y\wedge z) = (x\vee y)\wedge (x\vee z).
    \end{equation}
 \end{definition}
Either identity in (\ref{eq:distributiveLaws})
 can be deduced from the other 
 and \textsc{Definition} \ref{def:LatticeAsAlgebraicStruct}
 \cite[p. 10]{burris81:_cours_univer_algeb}.

A notion central to every branch of mathematics
 is isomorphism.
In particular,
 two lattices are isomorphic if they have
 the same structure.

 \begin{definition}[Lattice isomorphism]
   A \emph{homomorphism} of the lattice
    $({\cal L}_1, \vee, \wedge)$ into 
    the lattice $({\cal L}_2, \cup, \cap)$ 
    is a map $\phi:{\cal L}_1\rightarrow {\cal L}_2$
    satisfying
    \begin{equation}
      \label{eq:homomorphismOfLattices}
      \forall x,y \in {\cal L}_1,\ \
      \phi(x\vee y) = \phi(x) \cup \phi(y),
      \ \phi(x\wedge y) = \phi(x) \cap \phi(y).
    \end{equation}
    An \emph{isomorphism} is a bijective homomorphism.
 \end{definition}

More details on distributive lattices
 can be found in \cite{burris81:_cours_univer_algeb}
 from the perspective of universal algebra
 and in \cite[ch. 3]{stanley12:_enumer_combin} 
 from the viewpoint of posets.
See \cite{graetzer09:_lattic_theor} for a more accessible one.

\subsection{Boolean algebra}
\label{sec:BooleanAlgebra}

The simplest definition %of Boolean algebra
 may be due to Huntington 
 \cite{huntington04:_sets_indep_postul_algeb_logic}.
 % showed that one can deduce all conditions imposed
 % in the above two definitions
 % from axiomatic laws in the following definition.

 \begin{definition}%[Boolean algebra]
   \label{def:BooleanAlgebra}
   A \emph{Boolean algebra} is an algebra of the form
   \begin{equation}
     \label{eq:BooleanAlgebra}
     \mathbf{B}:=({\cal B},\ \vee,\ \wedge,\ \, ',\ \Zero,\ \One),
   \end{equation}
%    \mbox{$\mathbf{B}:=({\cal B}, \vee, \wedge,\, ', \Zero, \One)$}
    where the binary operations $\vee,\ \wedge$,
    the unary operation $\, '$ called complementation,
    and the nullary operations $\Zero,\ \One$
    satisfy
    \begin{enumerate}[({BA}-1)]
    \item the identity laws: $x\wedge \One = x$,\  $x\vee \Zero=x$,
    \item the complement laws: $x\wedge x' = \Zero$,\  $x\vee x'=\One$,
    \item the commutative laws (LA-1),
    \item the distributive laws (\ref{eq:distributiveLaws}).
    \end{enumerate}
 \end{definition}

Other definitions contain redundant axiomatic laws 
 that can be deduced from the above four conditions.
For example,
 Givant and Halmos \cite[p. 10]{givant09:_introd_boolean_algeb}
 defined a Boolean algebra as an algebra
 with its fundamental operations
 satisfying (LA-1), (LA-2),
 (\ref{eq:idempotentLaws}), (\ref{eq:boundedLattice}),
 (\ref{eq:distributiveLaws}),
 (BA-1), (BA-2), $\Zero'=\One$, $\One'=\Zero$, $(x')'=x$,
 and the DeMorgan's laws,
 \begin{equation}
   \label{eq:DeMorgansLaws}
   (x\wedge y)' = x'\vee y',\qquad
   (x\vee y)' = x'\wedge y'.
 \end{equation}

In this work we adopt the viewpoint
 of Burris and Sankappanavar \cite[p. 116]{burris81:_cours_univer_algeb}.
 \begin{definition}
   A \emph{Boolean algebra} is a bounded distributive lattice
   with an additional complementation operation
   that satisfies the complement laws (BA-2).
 \end{definition}

\subsection{Veblen's theorem}

A \emph{graph} is an ordered pair
 $G=(V, E)$ where $V$ is the set of \emph{vertices}
 and $E$ the set of \emph{edges},
 each edge being an \emph{unordered} pair of
 distinct vertices.
$G'$ is a \emph{subgraph} of $G$,
 written as $G'\subseteq G$,
 if $V(G')\subseteq V(G)$ and $E(G')\subseteq E(G)$.
If $uv\in E(G)$,
 then $u$ and $v$ are \emph{adjacent} in $G$,
 and the edge $uv$ is said to be 
 \emph{incident} to $u$ and $v$.
The \emph{degree} of a vertex $v$ is
 the number of edges incident to $v$.

% The \emph{order} of $G$,
%  denoted by $|G|=|V(G)|$,
%  is the number of vertices in $G$.
% The \emph{size} of $G$,
%  denoted by $e(G)=|E(G)|$,
%  is the number of edges in $G$.
% The set of vertices adjacent to a vertex $v\in V(G)$
%  is the \emph{neighborhood}
%  of $v$
%  and denoted by $\Gamma(v)$.
% The \emph{degree} of $v$ is $d(v)=|\Gamma(v)|$.
% $v$ is an \emph{isolated vertex} if $d(v)=0$.

\begin{definition}%[Paths and cycles]
  \label{def:cycles}
  A \emph{path} is 
   a graph $P$ of the form
  \mbox{$V(P)=\{v_0, v_1, \ldots, v_{\ell}\}$} and
  $E(P)=\{v_0v_1, v_1v_2, \ldots, v_{\ell-1}v_{\ell}\}$.
  A \emph{cycle} is a graph of the form $C:= P + v_0v_{\ell}$
   where $P$ is a path and $\ell\ge 2$.
\end{definition}

A graph $G$ is \emph{connected}
 if for every pair of distinct vertices in $V(G)$
 there is a subgraph of $G$
 as the path from one vertex to the other.
A \emph{component} of the graph is a \emph{maximal connected subgraph}.

% A graph with no cycles is \emph{acyclic}, or a \emph{forest}.
% A \emph{tree} is a connected forest;
%  conversely, a forest is a graph
%  of which every component is a tree.
% Sometimes it is convenient to consider
%  one vertex of a tree as special;
%  such a vertex is then called the \emph{root} of this tree.
% A tree with a fixed root is a \emph{rooted tree}.

\begin{theorem}[Veblen
  \cite{veblen13:_applic_modul_equat_analy_situs}]
  \label{thm:Veblen}
  The edge set of a graph can be partitioned into
  edge-disjoint cycles
  if and only if the degree of every vertex is even.
\end{theorem}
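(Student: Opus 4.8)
The plan is to prove the two implications separately, since they have quite different flavors. The forward direction (necessity of even degrees) is immediate, whereas the reverse direction (sufficiency) is the substantive one and will proceed by induction on the number of edges $|E(G)|$.

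For necessity, suppose $E(G)$ is partitioned into edge-disjoint cycles $C_1, \ldots, C_m$. In any cycle, as specified in Definition \ref{def:cycles}, each vertex that appears is incident to exactly two of that cycle's edges. Hence for a fixed vertex $v$, its degree in $G$ equals $\sum_{i} \deg_{C_i}(v)$, where each summand is either $0$ (when $v \notin V(C_i)$) or $2$. A sum of even numbers is even, so every vertex of $G$ has even degree.

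For sufficiency, I would argue by induction on $|E(G)|$. The base case $|E(G)| = 0$ is handled by the empty family of cycles, which is a valid partition. For the inductive step, assume every vertex of $G$ has even degree and $G$ has at least one edge. The crux is to extract a single cycle $C \subseteq G$. Once it is in hand, I pass to $G' := (V(G),\, E(G) \setminus E(C))$: deleting the edges of a cycle lowers the degree of each vertex lying on $C$ by exactly $2$ and leaves all other degrees unchanged, so every vertex of $G'$ still has even degree while $|E(G')| < |E(G)|$. The induction hypothesis then partitions $E(G')$ into edge-disjoint cycles, and adjoining $C$ yields the desired partition of $E(G)$.

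The main obstacle is establishing the existence of the cycle $C$, which is the only place the even-degree hypothesis is genuinely used. Here I would first discard all vertices of degree $0$, so that every remaining vertex has degree $\geq 2$ (even and positive). Then I would choose a path $P = v_0 v_1 \cdots v_k$ in $G$ of maximum length; such a longest path exists because $G$ is finite and a path (Definition \ref{def:cycles}) has no repeated vertices, whence bounded length. Since $\deg(v_0) \geq 2$ and edges are unordered pairs of \emph{distinct} vertices (so there are no multiple edges), $v_0$ has a neighbor $w \neq v_1$. Maximality of $P$ forces $w$ to lie on $P$—otherwise $w\, v_0\, v_1 \cdots v_k$ would be a strictly longer path—say $w = v_j$, where $j \geq 2$ because $w \neq v_0, v_1$. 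The edge $v_0 v_j$ together with the subpath $v_0 v_1 \cdots v_j$ then closes into a cycle, which has $\ell = j \geq 2$ as required, completing the argument.
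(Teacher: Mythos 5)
Your proof is correct and follows essentially the same route as the paper's: the necessity argument by summing the two edges each cycle contributes at a vertex, and the sufficiency argument by taking a maximal-length path $P = v_0 v_1 \cdots v_k$, using the even-degree hypothesis to find a second neighbor $w$ of $v_0$ that maximality forces onto $P$ at some $v_j$ with $j \ge 2$, closing a cycle, and then deleting it. The only cosmetic difference is that you package the repetition as induction on $|E(G)|$ (with the empty partition as base case), whereas the paper phrases it as iteratively removing cycles from the remaining subgraph; these are the same argument.
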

\begin{proof}
  If a graph is the union of a number of edge-disjoint cycles,
   then clearly a vertex contained in $n$ cycles has degree $2n$.
  Hence the necessity holds.
  
  Suppose that the degree of every vertex is a positive even integer.
  How do we find a single cycle in $G$?
  Let $P=x_0 x_1 \cdots x_{\ell}$ be a path of maximal length $\ell$
   in $G$.
  Since $d(x_0)\ge 2$, $x_0$ must have another neighbor $y$
   in addition to $x_1$.
  Furthermore, we must have $y=x_i$ for some $i\in[2,\ell]$;
   otherwise it would contradict the starting condition
   that $P$ is of maximal length.
  Therefore we have found a cycle $x_0 x_1 \cdots x_i$.

  Having found one cycle, we remove it from $G$.
  If the remaining subgraph $G_1$ of $G$ is not empty,
   then the degree of every vertex in $G_1$
   remains positive and even.
  Repeating the cycle-finding procedures completes the proof;
   see \cite[p. 5]{bollobas08:_moder_graph_theor}.
\end{proof}

A \emph{multigraph} is an augmented graph
 % that an edge may join a vertex to itself
 % and multiple edges may join the same pair of vertices.
 that allows \emph{loop}s
 and \emph{multiple edges};
 the former is defined as a special edge joining a vertex to itself
 and the latter several edges joining the same vertices.
A loop contributes 2 to the degree of a vertex
 while each edge in a multiple edge contribute 1.
As for cycles,
 the condition of $\ell\ge 2$ 
 in \textsc{Definition} \ref{def:cycles}
 is changed to $\ell\ge 0$ for a multigraph:
 $\ell=0$ indicates a loop
 and $\ell=1$ two edges joining the same vertices.
It is straightforward to extend Theorem \ref{thm:Veblen}
 to multigraphs.

 \begin{theorem}
   \label{thm:VeblenMultigraph}
   The edge set of a multigraph can be partitioned
    into edge-disjoint cycles
    if and only if the degree of each vertex is positive and even.
 \end{theorem}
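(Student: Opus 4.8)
The plan is to mirror the two-part proof of Theorem \ref{thm:Veblen} exactly, treating the only genuinely new features of a multigraph—loops and multiple edges—as the two new short cycles sanctioned by the relaxed condition $\ell\ge 0$ in \textsc{Definition} \ref{def:cycles}. Recall that a loop is a cycle with $\ell=0$ and a pair of parallel edges is a cycle with $\ell=1$. Throughout I would restrict attention to vertices of positive degree, since an isolated vertex carries no edges and is irrelevant to any partition of the edge set; this is exactly what the word ``positive'' in the hypothesis buys us.

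For necessity, I would repeat the degree-counting argument verbatim. If the edge set is partitioned into edge-disjoint cycles, then each positive-degree vertex $v$ lies on some number $n\ge 1$ of these cycles, and I would verify case by case that every cycle incident to $v$ contributes exactly $2$ to $d(v)$: a loop at $v$ contributes $2$ by fiat, the two parallel edges of an $\ell=1$ cycle through $v$ contribute $1$ each, and a longer cycle passes through $v$ along two distinct incident edges since its vertices are distinct. Summing over the $n$ cycles gives $d(v)=2n$, which is positive and even.

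For sufficiency, the engine is again the extraction of a single cycle from a multigraph in which every (positive-degree) vertex has even degree, followed by removal and iteration. I would split the extraction into three cases. First, if some vertex carries a loop, that loop is already a cycle. Second, if the multigraph is loopless but some pair of vertices is joined by at least two edges, any two such edges form an $\ell=1$ cycle. Third, if the multigraph is simple, I would invoke the maximal-path argument of Theorem \ref{thm:Veblen} unchanged: a longest path $x_0 x_1 \cdots x_\ell$ has $d(x_0)\ge 2$, so $x_0$ has a neighbor $y\ne x_1$, maximality forces $y=x_i$ for some $i\in[2,\ell]$, and $x_0 x_1 \cdots x_i$ closes into a cycle. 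Having produced one cycle, I would delete its edges; every vertex of the residual multigraph then has its degree reduced by $0$ or $2$, so the remaining positive-degree vertices are still of even degree, and the procedure repeats until no edges remain.

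I expect the only point requiring care—rather than a genuine obstacle—to be seeing why the three cases are really necessary and why they suffice. The separation matters because the literal maximal-path argument of Theorem \ref{thm:Veblen} breaks for loops (the ``other neighbor'' $y$ would be $x_0$ itself, with index outside $[2,\ell]$) and for a double edge at $x_0 x_1$ (where the second edge still lands on $x_1$, so $y\ne x_1$ fails); isolating these as the $\ell=0$ and $\ell=1$ cycles repairs both defects, after which the leftover simple graph falls squarely under the original theorem. Once this is observed, the three-case extraction covers every multigraph and the induction terminates because each step deletes at least one edge, completing the proof along the lines of \cite[p. 5]{bollobas08:_moder_graph_theor}.
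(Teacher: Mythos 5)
Your proposal is correct and takes essentially the approach the paper intends: the paper gives no separate proof of Theorem \ref{thm:VeblenMultigraph}, asserting only that it is a straightforward extension of Theorem \ref{thm:Veblen}, and your argument is exactly that extension carried out in detail --- necessity by the same degree count (using the paper's conventions that a loop contributes $2$ and each parallel edge contributes $1$), sufficiency by the same extract-and-remove iteration, with loops and parallel pairs absorbed as the $\ell=0$ and $\ell=1$ cycles permitted by the relaxed condition in \textsc{Definition} \ref{def:cycles}. Your identification of precisely where the literal maximal-path argument breaks for loops and double edges, and your restriction to positive-degree vertices, are accurate fillings-in of the details the paper leaves implicit.
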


A \emph{directed} graph/multigraph
 is a graph/multigraph where the edges
 are \emph{ordered} pairs of vertices.
An edge $uv$ is then said to \emph{start} at $u$
 and \emph{end} at $v$.
Furthermore,
 the degree of a vertex $v$ %of a directed graph/multigraph
 is split into
 the \emph{outdegree} $d^+(v)$ and the \emph{indegree} $d^-(v)$,
 with the former as the number of edges starting at $v$
 and the latter that of edges ending at $v$.
Veblen's Theorem generalizes to directed multigraphs
 in a straightforward manner.

 \begin{theorem}
   \label{thm:VeblenMultigraphDirected}
   The edge set of a directed multigraph can be partitioned
    into directed cycles
    if and only if each vertex has the same outdegree
    and indegree.
 \end{theorem}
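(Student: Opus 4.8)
The plan is to prove the two implications separately, following the same template as the undirected results (Theorems \ref{thm:Veblen} and \ref{thm:VeblenMultigraph}) but with the parity condition replaced throughout by the balance condition $d^+(v)=d^-(v)$. Necessity is a direct counting argument: if the edge set is partitioned into directed cycles $C_1,\ldots,C_m$, then each $C_j$, every time it passes through a vertex $v$, enters $v$ exactly as often as it leaves $v$, so $C_j$ contributes equally to $d^+(v)$ and to $d^-(v)$ (a directed loop at $v$ contributing $1$ to each). Summing these equal contributions over the cycles in the partition gives $d^+(v)=d^-(v)$ for every $v$.

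For sufficiency I would assume $d^+(v)=d^-(v)$ for all $v$ and induct on the number of edges, the heart of the matter being the extraction of a \emph{single} directed cycle. First I would start a directed walk at any vertex $v_0$ with $d^+(v_0)>0$ and repeatedly follow an unexplored out-edge. The key observation is that the walk never stalls: whenever it arrives at a vertex $w$ along an in-edge we have $d^-(w)\ge 1$, so the balance $d^+(w)=d^-(w)\ge 1$ supplies an out-edge to continue. Since the vertex set is finite, some vertex must recur; taking the segment of the walk between the first recurrence and the vertex that triggers it yields a directed cycle $C$ (degenerating to a directed loop when the recurrence is immediate, which is exactly the multigraph case $\ell=0$ in \textsc{Definition} \ref{def:cycles}).

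I would then close the induction by removing the edges of $C$. Traversing $C$ uses one in-edge and one out-edge at each vertex it visits, so deleting these edges decreases $d^+$ and $d^-$ by the same amount at every vertex; hence the balance condition persists on the remaining multigraph, which has strictly fewer edges. By the inductive hypothesis its edge set partitions into directed cycles, and adjoining $C$ produces the desired partition of the original edge set. The base case of no edges is vacuous.

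The hard part will not be any single deep step but rather the multigraph bookkeeping: I must verify that loops and multiple edges remain consistent with the extended notion of a directed cycle, and — more importantly — that the walk argument delivers a genuine directed cycle rather than merely a closed walk. This is precisely why I would extract the portion of the walk up to the \emph{first} repeated vertex, so that the intermediate vertices (and hence the edges) are distinct, instead of invoking a maximal trail and afterward decomposing it; the former route keeps the cycle-finding step clean and makes the degree accounting in the removal step transparent.
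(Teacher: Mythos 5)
Your proof is correct, and it is worth noting that the paper never actually writes out a proof of Theorem \ref{thm:VeblenMultigraphDirected}: the directed statement is asserted to follow ``in a straightforward manner,'' and the only written argument is the proof of Theorem \ref{thm:Veblen} for undirected simple graphs. Your necessity direction coincides with the paper's counting argument, transposed to in/out degrees. Your sufficiency direction, however, uses a genuinely different cycle-extraction device: the paper takes a path $P=x_0x_1\cdots x_\ell$ of \emph{maximal length} and uses $d(x_0)\ge 2$ plus maximality to force a second neighbor of $x_0$ onto $P$, whereas you run a directed walk that cannot stall (each arrival at a fresh vertex $w$ consumes one in-edge, and $d^+(w)=d^-(w)\ge 1$ then supplies an out-edge) and cut at the first repeated vertex. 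Both templates finish identically, by deleting the cycle --- which preserves the balance (resp.\ parity) condition --- and repeating or inducting; but your walk argument is arguably the cleaner directed analogue, since the maximal-path argument must be relocated from the tail $x_0$ to the head $x_\ell$ (whose unused out-edge is forced onto the path by maximality), and your stop-at-first-recurrence rule automatically yields distinct vertices and edges, including the degenerate loop case $\ell=0$ permitted by the multigraph convention following Definition \ref{def:cycles}. One cosmetic caution: your phrase ``unexplored out-edge'' justified only by $d^+(w)=d^-(w)\ge 1$ would be insufficient if the walk were allowed to revisit vertices (one would then need the invariant that each completed visit to $w$ uses equally many in- and out-edges at $w$); under your stopping rule every arrival before termination is at a fresh vertex, so the issue never materializes, but the proof reads more tightly if you say so explicitly.
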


%%% Local Variables: 
%%% mode: latex
%%% TeX-master: "../YinSets2D"
%%% End: 

%  LocalWords:  enumerative combinatorics antisymmetric poset arity
%  LocalWords:  Isomorphism nonnegative finitary nullary unary posets
%  LocalWords:  complementation cardinality arities differentiable
%  LocalWords:  isomorphism bijective homomorphism

\section{Yin sets}
\label{sec:repr-yin-sets}

Based on regular semianalytic sets
 introduced in Section \ref{sec:regularSets},
 we propose in Section \ref{sec:YinSpace}
 the Yin space for fluid modeling in two dimensions.
From the viewpoint of Jordan curves in Section \ref{sec:JordanCurves},
 we study in Section \ref{sec:topologyOfYinSets}
 the local and global topology of Yin sets,
 the results of which yield the notion of \emph{realizable spadjors}
 in Section \ref{sec:spadjors}
 as a unique boundary representation of Yin sets.

\subsection{Regular semianalytic sets}
\label{sec:regularSets}

In a topological space ${\mathcal X}$,
 the \emph{complement} of a subset ${\mathcal P}\subseteq {\mathcal X}$,
 written ${\mathcal P}'$,
 is the set ${\mathcal X}\setminus {\mathcal P}$.
% A subset of ${\mathcal X}$
%  is \emph{closed} if its complement is open.
The \emph{closure} of a set ${\mathcal P}\subseteq{\mathcal X}$,
 written ${\mathcal P}^-$,
 is the intersection of all closed 
 supersets of ${\mathcal P}$.
The \emph{interior} of ${\mathcal P}$, written ${\mathcal P}^{\circ}$,
 is the union of all open subsets of ${\mathcal P}$.
% in ${\mathfrak T}$.
The \emph{exterior} of ${\mathcal P}$,
 written ${\mathcal P}^{\perp}:= {\mathcal P}^{\prime\circ}
 :=({\mathcal P}')^{\circ}$,
 is the interior of its complement.
By the identity ${\mathcal P}^- ={\mathcal P}^{\prime\circ\prime}$
 \cite[p. 58]{givant09:_introd_boolean_algeb},
 we have 
 ${\mathcal P}^{\perp}={\mathcal P}^{-\prime}$.
A point $\mathbf{x}\in {\mathcal X}$ is
 a \emph{boundary point} of ${\mathcal P}$
 if $\mathbf{x}\not\in {\mathcal P}^{\circ}$
 and $\mathbf{x}\not\in {\mathcal P}^{\perp}$.
The \emph{boundary} of ${\mathcal P}$, written $\partial {\mathcal P}$,
 is the set of all boundary points of ${\mathcal P}$.
It can be shown that
 ${\mathcal P}^{\circ}={\mathcal P}\setminus \partial {\mathcal P}$
 and
 ${\mathcal P}^-= {\mathcal P}\cup \partial {\mathcal P}$.
An open set ${\mathcal P}\subseteq {\mathcal X}$ is \emph{regular} if
 it coincides with the interior of its own closure,
 i.e. if ${\mathcal P}={\mathcal P}^{-\circ}$.
A closed set ${\mathcal P}\subseteq {\mathcal X}$ is \emph{regular} if
 it coincides with the closure of its own interior,
 i.e. if ${\mathcal P}={\mathcal P}^{\circ-}$.
The duality of the interior and closure operators
 implies
 ${\mathcal P}^{\circ} ={\mathcal P}^{\prime-\prime}$,
 hence ${\mathcal P}$ is a \emph{regular open set}
 if and only if ${\mathcal P}={\mathcal P}^{\perp\perp}
 :=\left({\mathcal P}^{\perp}\right)^{\perp}$.
For any subset $Q\subseteq{\mathcal X}$,
 it can be shown that $Q^{\perp\perp}$ is a regular open set
 and $Q^{\circ -}$ is a regular closed set.

Regular sets, open or closed, capture the salient feature 
 that physically meaningful regions
 are free of lower-dimensional elements
 such as isolated points and curves in 2D
 and dangling faces in 3D.

 \begin{theorem}[MacNeille \cite{macneille37:_partial}
   and Tarski \cite{tarski37:_ueber_mengen_mengen}]
   \label{thm:regularOpenAlgebra}
   Let ${\mathbb B}$ denote
    the class of all regular open sets
    of a topological space ${\mathcal X}$
    and define
    ${\mathcal P}\cup^{\perp\perp}{\mathcal Q}
    := ({\mathcal P}\cup {\mathcal Q})^{\perp\perp}$
    for all ${\mathcal P},{\mathcal Q}\subseteq {\mathcal X}$.
   Then
    $\mathbf{B}_o:=({\mathbb B},\ \cup^{\perp\perp},\ \cap,\ \, ^{\perp},\
    \emptyset,\ {\mathcal X})$
    is a Boolean algebra.
 \end{theorem}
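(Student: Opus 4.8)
The plan is to verify the four defining ingredients of the adopted definition of a Boolean algebra: that $(\mathbb{B},\ \cup^{\perp\perp},\ \cap)$ is a bounded distributive lattice and that $^{\perp}$ obeys the complement laws (BA-2). Throughout I would work with the poset $(\mathbb{B}, \subseteq)$ and exploit the equivalence, established above, between the poset and algebraic views of a lattice. First I would record the facts the preamble licenses: $\mathcal{P}^{\perp} = \mathcal{P}^{-\prime}$, that $\mathcal{P}\in\mathbb{B}$ iff $\mathcal{P} = \mathcal{P}^{\perp\perp}$, and that $Q^{\perp\perp}$ is regular open for every $Q$. To these I would add the elementary identity $A^{\perp\perp} = A^{-\circ}$, valid for every set $A$, together with the monotonicity of the closure and interior operators.

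Next I would establish closure and the lattice structure. Closure under $^{\perp}$ and under $\cup^{\perp\perp}$ is immediate from the assumed fact that $(\cdot)^{\perp\perp}$ always yields a regular open set; closure under $\cap$ follows from $(\mathcal{P}\cap\mathcal{Q})^{-\circ}\subseteq \mathcal{P}^{-\circ}\cap\mathcal{Q}^{-\circ}=\mathcal{P}\cap\mathcal{Q}$ using regularity of the operands. I would then show that in $(\mathbb{B}, \subseteq)$ the infimum of $\{\mathcal{P},\mathcal{Q}\}$ is $\mathcal{P}\cap\mathcal{Q}$ and the supremum is $(\mathcal{P}\cup\mathcal{Q})^{\perp\perp}$. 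The infimum claim is routine; for the supremum I use that $\mathcal{P}\cup\mathcal{Q}$ is open, hence contained in $(\mathcal{P}\cup\mathcal{Q})^{-\circ}=(\mathcal{P}\cup\mathcal{Q})^{\perp\perp}$, and that any regular open upper bound $\mathcal{R}$ satisfies $(\mathcal{P}\cup\mathcal{Q})^{\perp\perp}\subseteq \mathcal{R}^{\perp\perp}=\mathcal{R}$ by monotonicity and regularity. By the stated equivalence of the two lattice definitions, the lattice axioms (LA-1)--(LA-3) then follow for free, and boundedness is immediate since $\emptyset\subseteq\mathcal{P}\subseteq\mathcal{X}$ for all $\mathcal{P}$.

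For the complement laws, $\mathcal{P}\cap\mathcal{P}^{\perp}=\emptyset$ follows at once from $\mathcal{P}^{\perp}=\mathcal{P}^{-\prime}\subseteq\mathcal{P}'$. For $\mathcal{P}\cup^{\perp\perp}\mathcal{P}^{\perp}=\mathcal{X}$ I would show the open set $\mathcal{P}\cup\mathcal{P}^{\perp}$ is dense, i.e. that its complement $\mathcal{P}'\cap(\mathcal{P}^{\perp})'=\mathcal{P}'\cap\mathcal{P}^{-}$ has empty interior: any open $V\subseteq\mathcal{P}^{-}\cap\mathcal{P}'$ would lie in $\mathcal{P}^{-\circ}=\mathcal{P}$ by regularity while being disjoint from $\mathcal{P}$, forcing $V=\emptyset$; density then gives $(\mathcal{P}\cup\mathcal{P}^{\perp})^{-\circ}=\mathcal{X}$.

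The hard part is \emph{distributivity}, which I would reduce to a single inclusion. Since one distributive inequality holds in every lattice, it suffices to prove $\mathcal{P}\cap(\mathcal{Q}\cup\mathcal{R})^{\perp\perp}\subseteq\bigl(\mathcal{P}\cap(\mathcal{Q}\cup\mathcal{R})\bigr)^{\perp\perp}$, after rewriting $(\mathcal{P}\cap\mathcal{Q})\cup(\mathcal{P}\cap\mathcal{R})=\mathcal{P}\cap(\mathcal{Q}\cup\mathcal{R})$. Writing $U=\mathcal{P}$ and $A=\mathcal{Q}\cup\mathcal{R}$, both open, the crux is the topological lemma that for open $U$ one has $U\cap A^{-}\subseteq(U\cap A)^{-}$. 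Granting it, $U\cap A^{\perp\perp}=U\cap A^{-\circ}\subseteq U\cap A^{-}\subseteq(U\cap A)^{-}$; since the left side is open it lands in the interior $(U\cap A)^{-\circ}=(U\cap A)^{\perp\perp}$, exactly the desired inclusion. I expect the proof of this open-set lemma --- showing that every neighborhood of a point of $U\cap A^{-}$ meets $U\cap A$ after intersecting with the open set $U$ --- to be the only genuinely topological step; everything else is bookkeeping with the interior/closure identities already catalogued in the preamble.
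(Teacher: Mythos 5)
Your proposal is correct, and every step checks out: the identity $A^{\perp\perp}=A^{-\circ}$, closure of $\mathbb{B}$ under the three operations, the sup/inf characterization of $\cup^{\perp\perp}$ and $\cap$ in the poset $(\mathbb{B},\subseteq)$, the density argument for $\mathcal{P}\cup^{\perp\perp}\mathcal{P}^{\perp}=\mathcal{X}$ (an open $V\subseteq\mathcal{P}^-\cap\mathcal{P}'$ lies in $\mathcal{P}^{-\circ}=\mathcal{P}$, hence is empty), and the reduction of distributivity to the one nontrivial inclusion via the lemma $U\cap A^-\subseteq(U\cap A)^-$ for open $U$. Note, however, that the paper itself supplies no argument at all for this theorem: its ``proof'' is a pointer to Givant and Halmos \cite[\S 10]{givant09:_introd_boolean_algeb}, so there is no in-text proof to diverge from. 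Your write-up is in substance the standard proof found in that reference, with one presentational difference worth flagging: Givant--Halmos organize the hard step around the identity $(U\cap V)^{\perp\perp}=U^{\perp\perp}\cap V^{\perp\perp}$ for open $U,V$ (developed through a calculus of the $^{\perp}$ operator, e.g.\ $U\subseteq U^{\perp\perp}$ and $U^{\perp\perp\perp}=U^{\perp}$), whereas you prove the needed inclusion $\mathcal{P}\cap(\mathcal{Q}\cup\mathcal{R})^{\perp\perp}\subseteq\bigl(\mathcal{P}\cap(\mathcal{Q}\cup\mathcal{R})\bigr)^{\perp\perp}$ directly from the closure lemma; the two are equivalent, and your route has the merit of isolating the single genuinely topological fact while leaving the rest as order-theoretic bookkeeping, exactly as you describe. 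Two small points to tighten: for closure under $\cap$ you state only the inclusion $(\mathcal{P}\cap\mathcal{Q})^{-\circ}\subseteq\mathcal{P}\cap\mathcal{Q}$ and should also record the reverse inclusion (immediate since $\mathcal{P}\cap\mathcal{Q}$ is open), and for boundedness in the sense of the paper's Definition~\ref{def:boundedLattice} you should note explicitly that $\emptyset$ and $\mathcal{X}$ are themselves regular open, and that $\mathcal{P}\cap\emptyset=\emptyset$ and $(\mathcal{P}\cup\mathcal{X})^{\perp\perp}=\mathcal{X}$, which is trivial but is what the axioms (\ref{eq:boundedLattice}) actually require.
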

 \begin{proof}
   See \cite[\S 10]{givant09:_introd_boolean_algeb}.
 \end{proof}

Similarly, it can be shown that,
 with appropriately defined operations,
 regular closed sets
 of a topological space ${\mathcal X}$
 also form a Boolean algebra
 \cite[p. 39]{kuratowski76:_set_theor_introd_descr_set_theor}.

Regular sets are not perfect for representing
 physically meaningful regions yet: 
 some of them
 cannot be described by a finite number of symbol structures.
For example, some sets
 have nowhere differentiable boundaries,
 which, in their parametric forms,
 are usually infinite series of continuous functions
 \cite{sagan92:_elemen_proof_schoen_space_fillin}.
Another pathological case
 is more subtle:
 intersecting two regular sets with piecewise smooth boundaries
 may yield an infinite number of disjoint regular sets.
Consider 
 \begin{equation}
   \label{eq:pathologicalIntersection}
   \left\{
   \begin{array}{l}
   {\mathcal A}_p := \{(x,y)\in \mathbb{R}^2 :
    -2< y< \sin\frac{1}{x},\ 
    0< x< 1 \},\\
   {\mathcal A}_s := \{(x,y)\in \mathbb{R}^2 :
    0 < y < 1,\ 
    -1< x< 1 \}.
   \end{array}
   \right.
 \end{equation}

Although both ${\mathcal A}_p$ and ${\mathcal A}_s$
 are described by two inequalities,
 their intersection
 is a disjoint union of an infinite number of regular sets;
 see
 \cite[Fig. 4-1, Fig. 4-2]{requicha77:_mathem_model_rigid_solid_objec}.
This poses a fundamental problem
 that results of Boolean operations of two regular sets
 may not be well represented on a computer
 by a finite number of entitites.

Therefore, %The above discussion entails
 we need to find a proper subspace of regular sets,
 each element of which is finitely describable.
This search eventually
% \footnote{
% The exclusion of nonphysical regular closed sets
%  involves the triangulation of a topological space
%  into a simplicial complex
%  and imposing certain conditions on the embedding
%  of the resulting topological polyhedron
%  in three or higher dimensions;
%  see \cite[Sec. 4.2]{requicha77:_mathem_model_rigid_solid_objec}
%  for an accessible exposition.
% }
 arrives at semianalytic sets.

\begin{definition}
  \label{def:semianalytic-sets}
  A set ${\mathcal S}\subseteq\mathbb{R}^{\Dim}$ is \emph{semianalytic}
   if there exist a finite number of 
   analytic functions $g_i:\mathbb{R}^{\Dim}\rightarrow \mathbb{R}$
   such that ${\mathcal S}$ is in the universe
   of a finite Boolean algebra formed from the sets
   \begin{equation}
     \label{eq:semiAnalyticForm}
     {\mathcal X}_i=\left\{\mathbf{x}\in \mathbb{R}^{\Dim}
       :  g_i(\mathbf{x})\ge 0\right\}.
   \end{equation}
  The $g_i$'s are called the \emph{generating functions}
   of ${\mathcal S}$.
  In particular, a semianalytic set is \emph{semialgebraic}
  if all of its generating functions are polynomials.
\end{definition}

Recall that a function is \emph{analytic}
 if and only if
 its Taylor series at $\mathbf{x}_0$
 converges to the function in some neighborhood
 for every $\mathbf{x}_0$ in its domain.
In the example of (\ref{eq:pathologicalIntersection}),
 ${\mathcal A}_s$ is semianalytic while 
 ${\mathcal A}_p$ is not,
 because the Taylor series of $g_2(x,y):=\sin\frac{1}{x}-y$
 at the origin does not converge.
Roughly speaking,
 the boundary curves of regular semianalytic sets
 are piecewise smooth.
% By restricting our attention to regular closed or open sets
%  ${\mathcal S}$ which are also semianalytic,
%  we can be sure that for each $\mathbf{x}\in{\mathcal S}$
%  and any sufficiently small $r>0$,
%  the intersection of ${\cal S}$
%  to the open ball at $x$ with radius $r$
%  can be described by a finite set
%  of ``well-behaved'' functions.

\subsection{$\mathbb{Y}$: 
  the Yin space for fluid modeling}
\label{sec:YinSpace}

Regular closed semianalytic sets have been
 an essential mathematical tool
 for solid modeling
 since the dawning time of this field
 \cite{requicha78:_mathem_found_const_solid_geomet}.
However,
 as shown in Figure \ref{fig:boundaryDecompositions}, 
 requiring regular semianalytic sets to be closed
 would make their boundary representation
 not unique,
 degrading the isomorphism in Definition
 \ref{def:boundaryToInteriorMap}
 and Theorem \ref{thm:b2iMapIsBijective}
 to a homomorphism.
As another work closely related to this one,
 the analysis on a family of interface tracking methods
 \cite{zhang13:VOFadvection,zhang16:_mars}
 via the theory of donating regions
 \cite{zhang13:_donat_region,zhang15:_gener_donat_region}
 also requires that the regular sets be open.
Therefore,
 only regular open semianalytic sets are employed in this work.

\begin{definition}
  \label{def:YinSet}
  A \emph{Yin set}\footnote{
    Yin sets are named after the first author's mentor,
    Madam Ping Yin.
    As a coincidence,
     the most important dichotomy in Taoism
     consists of Yin and Yang,
     where Yang represents the active, the straight, the ascending,
     and so on, 
     while Yin represents the passive, the circular, the descending,
     and so on.
  From this viewpoint, straight lines and Jordan curves can be 
   considered as Yang 1-manifolds and Yin 1-manifolds, respectively.
  } $\mathcal{Y}\subseteq \mathbb{R}^2$
   is a regular open semianalytic set
   whose boundary is bounded.
  The class of all such Yin sets form
   the \emph{Yin space} $\mathbb{Y}$.
\end{definition}

 \begin{figure}
   \centering
   \subfigure[an unbounded Yin set]{
     \includegraphics[width=0.4\linewidth]{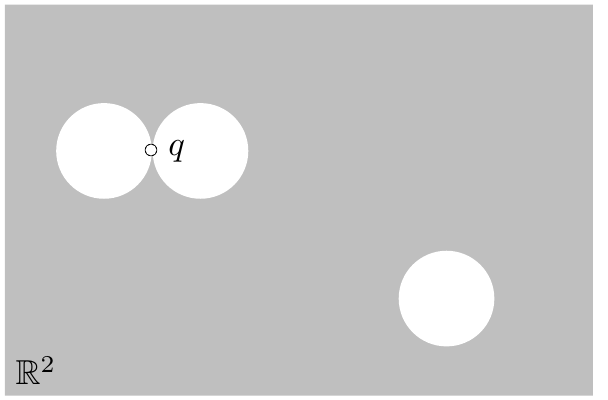}
   }
   \hfill
   \subfigure[a bounded Yin set]{
     \includegraphics[width=0.4\linewidth]{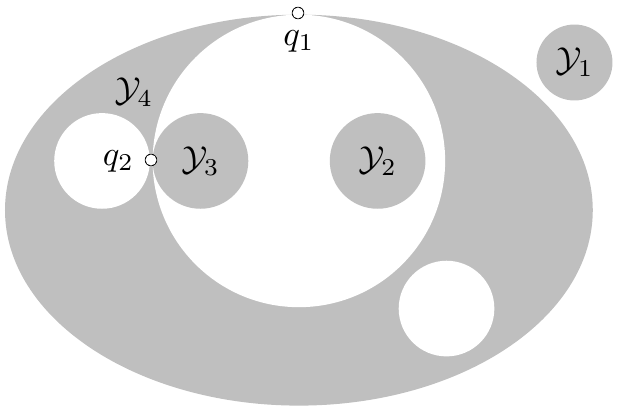}
   }
   \caption{Examples of Yin sets.
     The Yin set in (a) is obtained
      by removing from $\mathbb{R}^2$ three closed balls,
      two of which share a common boundary point $q$.
     The Yin set in (b) is 
      the union of four pairwise disjoint Yin sets
      ${\cal Y}=\bigcup_{i=1}^4{\cal Y}_i$,
      where ${\cal Y}_4$ is an open ellipse
      with three closed balls removed,
      two of which share a common boundary point $q_2$.
     The points $q$, $q_1$, $q_2$
      are boundary points but not interior points
      of the Yet sets.
   }
 \label{fig:YinSets}
\end{figure}

In Figure \ref{fig:YinSets},
 the Yin set in subplot (a)
 is unbounded and connected
 while that in subplot (b)
 is bounded and consists of four disjoint components.

By Definition \ref{def:semianalytic-sets},
 semianalytic sets are closed under
 set complementation, finite union, and finite intersection.
% they are also closed under topological operations.
Then by Theorem \ref{thm:regularOpenAlgebra},
 regular open semianalytic sets
 form a Boolean algebra
 since they are the intersection of the universes
 of two Boolean algebras.
Furthermore,
 Boolean operations on Yin sets
 preserve the attribute of
 a bounded boundary being bounded,
 hence we have
 
 \begin{theorem}
   \label{thm:YinSetsFormABooleanAlgebra}
    The algebra
    $\mathbf{Y}:=({\mathbb Y},\ \cup^{\perp\perp},\ \cap,\ \, ^{\perp},\
    \emptyset,\ \mathbb{R}^2)$
    is a Boolean algebra.
 \end{theorem}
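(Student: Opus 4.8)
The plan is to exhibit $\mathbb{Y}$ as a Boolean \emph{subalgebra} of the MacNeille--Tarski algebra $\mathbf{B}_o$ of Theorem~\ref{thm:regularOpenAlgebra}, instantiated with ${\mathcal X}=\mathbb{R}^2$. Every Yin set is by Definition~\ref{def:YinSet} regular open, so $\mathbb{Y}\subseteq\mathbb{B}$, and the constants and operations of $\mathbf{Y}$ are precisely those of $\mathbf{B}_o$ restricted to $\mathbb{Y}$. By the standard subalgebra criterion, it then suffices to check that $\emptyset$ and $\mathbb{R}^2$ lie in $\mathbb{Y}$ and that $\mathbb{Y}$ is closed under $\cap$, $\cup^{\perp\perp}$, and $^{\perp}$; closure \emph{within} $\mathbb{B}$ is automatic from $\mathbf{B}_o$, so the entire task reduces to showing that the two conditions that cut $\mathbb{Y}$ out of $\mathbb{B}$ --- being semianalytic and having a bounded boundary --- are each preserved by the three operations. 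The constants are immediate, since $\emptyset$ and $\mathbb{R}^2$ are regular open, semianalytic, and have empty (hence bounded) boundary.

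For semianalyticity I would first factor each operation into set operations followed by topological operators. For open operands the identities recorded in the excerpt give ${\mathcal P}^{\perp}=\left({\mathcal P}^{-}\right)'$ and ${\mathcal P}\cup^{\perp\perp}{\mathcal Q}=\left({\mathcal P}\cup{\mathcal Q}\right)^{-\circ}$, while $\cap$ is already a pure set operation. Definition~\ref{def:semianalytic-sets} closes the semianalytic sets under $\cup$, $\cap$, and complementation, so the only missing ingredient is that the closure and the interior of a semianalytic set are again semianalytic. This is the crux of the argument and, I expect, the genuine obstacle: it is \emph{not} a formal consequence of Boolean closure, but rather the structural fact about semianalytic sets that makes them finitely describable in the first place (closure under $^{-}$, and by complementation under $^{\circ}$). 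I would invoke it as the known closure property of semianalytic sets from real-analytic geometry rather than reprove it; granting it, each operation carries semianalytic operands to semianalytic results.

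The boundedness of the boundary --- the ``Furthermore'' of the surrounding text --- is then routine point-set topology. I would use the elementary inclusions $\partial({\mathcal A}')=\partial{\mathcal A}$, $\partial({\mathcal A}^{-})\subseteq\partial{\mathcal A}$, $\partial({\mathcal A}^{-\circ})\subseteq\partial{\mathcal A}$, and $\partial({\mathcal A}\cup{\mathcal B})\cup\partial({\mathcal A}\cap{\mathcal B})\subseteq\partial{\mathcal A}\cup\partial{\mathcal B}$, each following from $\partial{\mathcal S}={\mathcal S}^{-}\setminus{\mathcal S}^{\circ}$ and monotonicity of closure. Chaining them through the factorizations above yields $\partial({\mathcal P}\cap{\mathcal Q})\subseteq\partial{\mathcal P}\cup\partial{\mathcal Q}$, then $\partial({\mathcal P}\cup^{\perp\perp}{\mathcal Q})\subseteq\partial({\mathcal P}\cup{\mathcal Q})\subseteq\partial{\mathcal P}\cup\partial{\mathcal Q}$, and $\partial({\mathcal P}^{\perp})=\partial({\mathcal P}^{-})\subseteq\partial{\mathcal P}$, so in every case the result has a boundary contained in a finite union of the operands' boundaries and is therefore bounded. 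Having verified that $\mathbb{Y}$ contains $\emptyset$ and $\mathbb{R}^2$ and is closed under all three operations, the subalgebra criterion delivers that $\mathbf{Y}$ is a Boolean algebra, with $\emptyset$ and $\mathbb{R}^2$ serving as $\Zero$ and $\One$.
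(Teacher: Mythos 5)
Your proposal is correct and follows essentially the same route as the paper: both realize $\mathbb{Y}$ as the restriction of the MacNeille--Tarski algebra $\mathbf{B}_o$ of Theorem~\ref{thm:regularOpenAlgebra} to those regular open sets that are semianalytic with bounded boundary, and verify that the constants lie in $\mathbb{Y}$ and that $\mathbb{Y}$ is closed under $\cap$, $\cup^{\perp\perp}$, and $^{\perp}$. If anything, your version is more careful than the paper's at the one delicate point: the paper justifies closure by saying $\mathbb{Y}$ lies in ``the intersection of the universes of two Boolean algebras,'' citing only closure of semianalytic sets under set complementation, finite union, and finite intersection, whereas the regularized operations $\cup^{\perp\perp}={}^{-\circ}\circ\cup$ and $^{\perp}={}'\circ{}^{-}$ also involve the topological closure and interior operators, so preservation of semianalyticity genuinely requires the nontrivial real-analytic-geometry fact (due to \L ojasiewicz) that you correctly flag and invoke, and your explicit inclusions $\partial({\mathcal P}\cap{\mathcal Q})\subseteq\partial{\mathcal P}\cup\partial{\mathcal Q}$, $\partial({\mathcal P}\cup^{\perp\perp}{\mathcal Q})\subseteq\partial{\mathcal P}\cup\partial{\mathcal Q}$, and $\partial({\mathcal P}^{\perp})\subseteq\partial{\mathcal P}$ likewise fill in the paper's one-line assertion that boundedness of the boundary is preserved.
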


% Recall from Section \ref{sec:regularSets}
%  that the exterior of ${\cal P}$,
%  ${\mathcal P}^{\perp}:=({\mathcal P}')^{\circ}$, 
%  is the interior of its complement
%  and
%  ${\mathcal P}\cup^{\perp\perp}{\mathcal Q}
%  := ({\mathcal P}\cup {\mathcal Q})^{\perp\perp}$.

\subsection{Jordan curves and orientations}
\label{sec:JordanCurves}

A \emph{path} in $\mathbb{R}^2$ from $p$ to $q$
 is a continuous map $f:[0,1]\rightarrow \mathbb{R}^2$
 satisfying $f(0)=p$ and $f(1)=q$.
A subset ${\cal P}$ of $\mathbb{R}^2$
 is \emph{path-connected} if
 every pair of points of ${\cal P}$
 can be joined by a path in ${\cal P}$.
Given ${\cal Q}\subset \mathbb{R}^2$,
 define an equivalence relation on ${\cal Q}$
 by setting $x\sim y$ if there is a path-connected subset of $Q$
 that contains both $x$ and $y$.
The equivalence classes are called
 the \emph{path-connected component}s of ${\cal Q}$.

A \emph{planar curve} % in $\mathbb{R}^{\Dim}$
 is a continuous map
 $\gamma: (0,1) \rightarrow \mathbb{R}^{2}$.
It is \emph{smooth} if the map is smooth.
It is \emph{simple} if the map is injective;
 otherwise it is \emph{self-intersecting}.
Although strictly speaking a curve $\gamma$ is a map,
 we also use $\gamma$ to refer to its image.
Two distinct piecewise smooth curves $\gamma_1$ and $\gamma_2$
 \emph{intersect} at $q$ if there exist $s_1,s_2\in (0,1)$
 such that \mbox{$\gamma_1(s_1)=\gamma_2(s_2)=q$}.
Then $q$ is the \emph{intersection} of $\gamma_1$ and $\gamma_2$.
For an open ball ${\cal N}_r(q)$ with sufficiently small radius $r$,
 ${\cal N}_r(q)\setminus\gamma_1$ consists of
 two disjoint connected regular open sets.
If $\gamma_2\setminus q$ is 
 entirely contained in one of these two sets,
 $q$ is an \emph{improper intersection};
 otherwise it is a \emph{proper intersection}.
% illustrated in Figure \ref{fig:localTopology} (b).
%
Two curves are \emph{disjoint} if
 they have neither proper intersections nor improper ones.
Suppose upon its extension to a path,
 a simple curve $\gamma$ further satisfies
 $\gamma(0)=\gamma(1)$,
 then $\gamma$ is
 a \emph{simple closed curve} or \emph{Jordan curve}. % (SCC)

\begin{theorem} [Jordan Curve Theorem \cite{jordan87:_cours_daanl_polyt}]
 \label{thm:jordan}
 The complement of a Jordan curve $\gamma$
 in the plane $\mathbb{R}^2$
 consists of two components,
 each of which has $\gamma$ as its boundary.
 One component is bounded and the other is unbounded;
 both of them are open and path-connected.
\end{theorem}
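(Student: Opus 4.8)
The plan is to pass to the one-point compactification $S^2 = \mathbb{R}^2 \cup \{\infty\}$, extract the component count from homology, and then recover the remaining assertions by elementary point-set arguments. Since $\gamma$ is the continuous image of the circle $S^1$, it is compact, hence closed in $\mathbb{R}^2$ and in $S^2$; we may assume $\infty \notin \gamma$. Consequently $\mathbb{R}^2 \setminus \gamma$ and $S^2 \setminus \gamma$ are open, and because $\mathbb{R}^2$ is locally path-connected, the connected components of these open sets are themselves open and coincide with their path-connected components. This immediately settles the claims that the components are open and path-connected, leaving the count, the boundary identification, and the bounded/unbounded dichotomy.

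First I would count the components via Alexander duality. The topological circle $\gamma \subset S^2$ is compact and locally contractible, so $\tilde{H}_0(S^2 \setminus \gamma) \cong \tilde{H}^1(\gamma)$. As $\gamma$ is homeomorphic to $S^1$, one has $\tilde{H}^1(\gamma) \cong \tilde{H}^1(S^1) \cong \mathbb{Z}$, whence $\tilde{H}_0(S^2 \setminus \gamma) \cong \mathbb{Z}$ and $S^2 \setminus \gamma$ has exactly two components. Removing $\infty$ from whichever component contains it leaves that component connected, since the space is locally Euclidean there; and because $\gamma$ lies in some ball $B_R(0)$, the exterior $\mathbb{R}^2 \setminus B_R(0)$ is path-connected and meets only this one component. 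Hence this is the unique unbounded component of $\mathbb{R}^2 \setminus \gamma$, while the other is contained in $B_R(0)$ and is therefore bounded. This yields exactly two components of $\mathbb{R}^2 \setminus \gamma$, one bounded and one unbounded.

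The main obstacle is the boundary assertion $\partial U = \gamma$ for each component $U$. The inclusion $\partial U \subseteq \gamma$ is immediate: $\mathbb{R}^2 \setminus \gamma$ is open, so each component is open and its boundary can consist only of points of $\gamma$. The reverse inclusion $\gamma \subseteq \partial U$ — that every point of $\gamma$ is a limit of points of both components — is the genuinely hard part, precisely because $\gamma$ is merely continuous and may be wildly embedded. Here the plan is to exploit the local structure of $\gamma$ as a topological arc: if some $p \in \gamma$ failed to lie on $\partial U$, then $\gamma \setminus \{p\}$ would already separate the plane, contradicting the fact that an arc does not separate $\mathbb{R}^2$. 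That non-separation statement is itself the technical engine and is supplied by the companion homological computation $\tilde{H}_0(S^2 \setminus A) = 0$ for any arc $A$, established by the same Alexander-duality or Mayer--Vietoris machinery.

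Finally, I would remark that for the piecewise-smooth, indeed semianalytic, Jordan curves that actually arise in this work, a fully elementary proof is available and preferable, avoiding homology altogether. One defines the winding number $w(p) = \frac{1}{2\pi}\oint_\gamma \dif\theta_p$, where $\theta_p$ is the angle subtended at $p \notin \gamma$, observes that $w$ is integer-valued and locally constant on $\mathbb{R}^2 \setminus \gamma$ hence constant on each component, vanishes on the unbounded component, and jumps by $\pm 1$ as $p$ crosses $\gamma$ transversally. A crossing-parity argument along a generic ray then shows that exactly the two values $0$ and $\pm 1$ are attained, recovering the two components together with the boundary and (un)boundedness properties directly from the tame local geometry of the curve.
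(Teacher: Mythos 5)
The paper does not prove this statement at all: it is quoted as the classical Jordan Curve Theorem with a citation to Jordan's original lectures, so there is no in-paper argument to compare against and your proof stands on its own. Your route is the standard modern one --- pass to $S^2$, get the two-component count from Alexander duality $\tilde{H}_0(S^2\setminus\gamma)\cong\tilde{H}^1(\gamma)\cong\mathbb{Z}$ (valid since $\gamma$, being homeomorphic to $S^1$, is compact and locally contractible), handle openness and path-connectedness by local path-connectedness of the plane, get the bounded/unbounded dichotomy from a large ball, and reduce $\gamma\subseteq\partial U$ to the non-separation of an arc. This is correct in substance and, for the paper's purposes, your closing remark is apt: since every Jordan curve in this paper is semianalytic, hence piecewise smooth, the elementary winding-number argument already covers everything the paper actually uses, and the homological machinery is only needed for wild (merely continuous) embeddings.

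One imprecision worth repairing in the boundary step: $\gamma\setminus\{p\}$ is an \emph{open} arc, hence not compact, so the non-separation lemma $\tilde{H}_0(S^2\setminus A)=0$ (whose proof via Alexander duality needs $A$ compact) does not apply to it as written. The standard fix: if $p\in\gamma$ but $p\notin\partial U$, choose an open ball $B\ni p$ with $B\cap U=\emptyset$ (so in fact $B\cap\overline{U}=\emptyset$, $B$ being open), excise from $\gamma$ an open subarc around $p$ contained in $B$, and let $A$ be the complementary \emph{closed} subarc. Then $\partial U\subseteq\gamma\setminus B\subseteq A$, so $U$ is both open and closed in the connected set $S^2\setminus A$; since the other component $W$ is a nonempty subset of $S^2\setminus A$ disjoint from $U$, this forces $U=\emptyset$, a contradiction. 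With that substitution your sketch closes up completely.
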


The above theorem states that a Jordan curve divides the plane
 into three parts: itself, its \emph{interior},
 and \emph{exterior}.

\begin{definition}
\label{def:interior}
The \emph{interior} of an oriented Jordan curve $\gamma$,
 denoted by $\Int(\gamma)$,
 is the component of the complement of $\gamma$
 that always lies to the left
 when an observer traverses the curve
 in the increasing direction of the parameterization $s$.
\end{definition}

A Jordan curve is said to be \emph{positively oriented}
 if its interior is the bounded component of its complement;
 otherwise it is \emph{negatively oriented}.
The orientation of a Jordan curve
 can be flipped
 by reversing the increasing direction of the parameterization.
The following notion will be used throughout this work.

\begin{definition}
  \label{def:almostDisjoint}
  Two Jordan curves are 
   \emph{almost disjoint}
   if they have no proper intersections
   and at most a finite number of improper intersections.
\end{definition}

\subsection{The local and global topology of a Yin set}
\label{sec:topologyOfYinSets}

The following lemma characterizes
 the local topology of a Yin set at its boundary.

 \begin{figure}
   \centering
   \subfigure[one curve]{
     \includegraphics[width=0.20\linewidth]{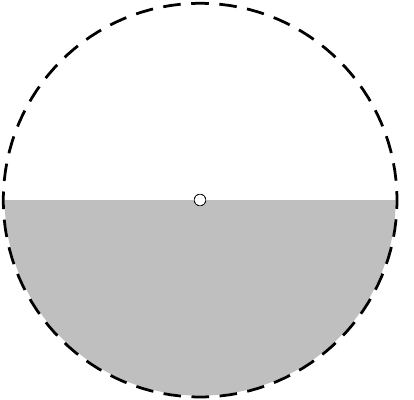}
   }
   \hfill
   \subfigure[two curves]{
     \includegraphics[width=0.20\linewidth]{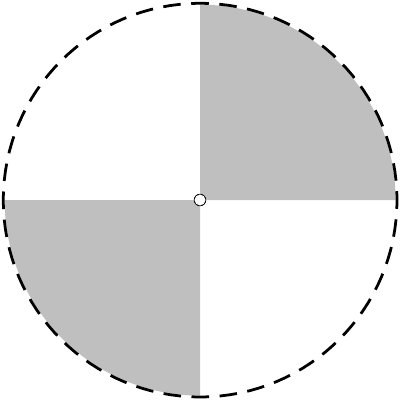}
   }
   \hfill
   \subfigure[three or more]{
     \includegraphics[width=0.20\linewidth]{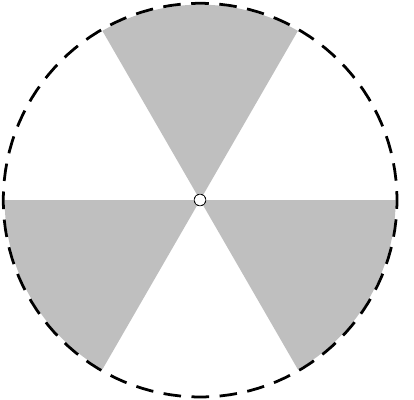}
   }
   \caption{The local topology at a boundary point $p$
     (open dot) of a Yin set ${\cal Y}$ (shaded region).
     The dashed circle represents
      the boundary of ${\cal N}_r(p)$,
      a local neighborhood of $p$. 
%      and the lines $\partial{\cal Y}\cap {\cal N}_r(p)$.
     The open dot in subplot (c) corresponds to the boundary point $q_2$
      in Figure \ref{fig:YinSets}(b)
      while that in subplot (b) to $q$, $q_1$
       in Figure \ref{fig:YinSets}(a), (b).
     All other boundary points in Figure \ref{fig:YinSets}
      correspond to that in subplot (a).
   }
 \label{fig:localTopology}
\end{figure}

 \begin{lemma}
   \label{lem:localTopologyOfBdPtOfYinSet}
   Let $p\in\partial {\cal Y}$ be a boundary point of a Yin set
    ${\cal Y}\subset \mathbb{R}^2$
    and denote by ${\cal N}_r(p)$ the open ball centered at $p$
    with its radius $r>0$.
   For any sufficiently small $r$,
    \begin{enumerate}[(a)]
    \item $\partial {\cal Y}\cap {\cal N}_r(p)$
      %${\cal N}_r(p)\cap (\partial {\cal Y}\setminus p)$
      consists of $n_c(p)$ simple curves, 
      where $n_c(p)$ is a finite positive integer,
    \item if $n_c(p)>1$,
      all simple curves in (a) intersect
      and $p$ is their sole intersection,
    \item ${\cal N}_r(p)\setminus\partial {\cal Y}$
      consists of an even number of disjoint regular open sets;
      for two such sets sharing a common boundary,
      one is a subset of ${\cal Y}$
      while the other that of ${\cal Y}^{\perp}$.
%      in an interlaced manner.
    \end{enumerate}
 \end{lemma}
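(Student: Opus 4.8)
The plan is to read off the arc structure in (a) from the local finiteness of semianalytic sets, and to obtain the parity and the alternation in (c) from the regular--open trichotomy ${\mathbb R}^2={\cal Y}\sqcup\partial{\cal Y}\sqcup{\cal Y}^{\perp}$ recorded in Section \ref{sec:regularSets}; part (b) will then be a by-product of choosing $r$ small. First I would note that $\partial{\cal Y}$ is a one-dimensional semianalytic set, being the boundary of the regular open semianalytic set ${\cal Y}$. The classical local structure theory of semianalytic sets (local finiteness together with the conic structure theorem; concretely, the Puiseux/Weierstrass description of the real-analytic germs cut out at $p$ by the generating functions of ${\cal Y}$) then yields an $r_0>0$ such that, for every $0<r<r_0$, the set $\partial{\cal Y}\cap{\cal N}_r(p)$ is a finite union of analytic arcs --- call them \emph{branches} --- each having $p$ as a limit endpoint, each meeting the circle $\partial{\cal N}_r(p)$ in exactly one point, and any two of them meeting only at $p$. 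Writing $N$ for the number of branches, these branches cut ${\cal N}_r(p)\setminus\partial{\cal Y}$ into exactly $N$ open \emph{sectors}. Establishing this first step rigorously is the main obstacle, since it rests on the nontrivial local theory of real-analytic varieties; every subsequent step is elementary point-set topology.

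For (c) I would invoke that ${\cal Y}$ is regular open, which gives the disjoint decomposition ${\mathbb R}^2={\cal Y}\sqcup\partial{\cal Y}\sqcup{\cal Y}^{\perp}$, so that ${\cal N}_r(p)\setminus\partial{\cal Y}$ is the disjoint union of the open sets ${\cal Y}\cap{\cal N}_r(p)$ and ${\cal Y}^{\perp}\cap{\cal N}_r(p)$; hence each sector inherits a well-defined label ${\cal Y}$ or ${\cal Y}^{\perp}$ and, being bounded only by analytic arcs and an arc of $\partial{\cal N}_r(p)$, is itself regular open. The crux is that consecutive sectors must carry opposite labels. Were two sectors sharing a branch $\alpha$ both labelled ${\cal Y}$, then for an interior point $x\in\alpha\setminus\{p\}$ and a ball ${\cal N}_{\delta}(x)$ so small that it meets $\partial{\cal Y}$ only along $\alpha$ (possible by local finiteness), one would obtain ${\cal N}_{\delta}(x)\subseteq{\cal Y}\cup\partial{\cal Y}={\cal Y}^{-}$, whence $x\in{\cal Y}^{-\circ}={\cal Y}$ by regularity --- contradicting $x\in\partial{\cal Y}$ together with ${\cal Y}$ being open. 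The symmetric argument, using that ${\cal Y}^{\perp}$ is likewise regular open (Theorem \ref{thm:regularOpenAlgebra}), excludes both labels being ${\cal Y}^{\perp}$. Thus the labels strictly alternate around $\partial{\cal N}_r(p)$, which both forces $N$ to be even and establishes the final assertion of (c) that the two sectors on either side of a shared branch lie one in ${\cal Y}$ and one in ${\cal Y}^{\perp}$.

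Writing $N=2n_c(p)$ then settles (a): since $p\in\partial{\cal Y}$ has points of both ${\cal Y}$ and ${\cal Y}^{\perp}$ arbitrarily near it, at least one sector of each label occurs, so $N\ge 2$ and $n_c(p)$ is a positive finite integer. Because the alternation forbids any branch from dead-ending, the $2n_c(p)$ branches can be paired into the analytic arcs through $p$ of which they are the two halves, which exhibits $\partial{\cal Y}\cap{\cal N}_r(p)$ as exactly $n_c(p)$ simple curves and proves (a). Finally, (b) is immediate from the first step: when $n_c(p)>1$, distinct simple curves are assembled from distinct branches, and distinct branches meet only at $p$, so $p$ is their sole common point.
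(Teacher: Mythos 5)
Your proof is correct, and it follows the same broad skeleton as the paper's: extract a local analytic arc structure of $\partial{\cal Y}$ near $p$, then use regularity of ${\cal Y}$ (and of ${\cal Y}^{\perp}$) to force the alternation of labels in (c). But the two arguments differ in a way worth noting, and your version is the more careful one at the decisive step. The paper justifies the arc structure by appealing to the implicit function theorem applied to the generating functions $g_i$, which is insufficient exactly where it matters: at a point where $\nabla g_i$ vanishes --- and the non-manifold points with $n_c(p)>1$, such as $q_2$ in Figure \ref{fig:YinSets}(b), are typically such singular points --- the zero set of $g_i$ need not be a simple curve locally. Your appeal to the local structure theory of semianalytic sets (local finiteness plus the conic structure / Puiseux description) is the correct tool and repairs this; you are also right to flag it as the one genuinely nontrivial input. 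The logical order is reversed as well: the paper deduces (a) and (b) first and then asserts the evenness in (c) from them, whereas you prove the alternation of sector labels first (via the neat ${\cal N}_{\delta}(x)\subseteq{\cal Y}^{-}$, hence $x\in{\cal Y}^{-\circ}={\cal Y}$ contradiction, with the symmetric argument for ${\cal Y}^{\perp}$, matching the paper's two-case contradiction in spirit but with the details actually supplied) and then \emph{derive} the evenness $N=2n_c(p)$, the exclusion of $N\le 1$ from the regular-open characterization of boundary points (essentially Corollary \ref{coro:brdPtOfRegularSets}, which you rederive correctly), and finally (a) by pairing branches through $p$ and (b) from branches meeting only at $p$. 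This ordering buys a cleaner dependency structure: evenness is a theorem of the alternation rather than a consequence of an unproven curve count. One small presentational nit: you assert each sector is regular open before establishing alternation, but a sector adjacent to itself across a branch (the $N=1$ configuration) would fail regularity; since your alternation argument uses only the regularity of ${\cal Y}$ and ${\cal Y}^{\perp}$, not of the sectors, there is no circularity --- just state the sectors' regularity after alternation excludes self-adjacency.
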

 \begin{proof}
   Since ${\cal Y}$ is semianalytic,
    Definition \ref{def:semianalytic-sets} implies that
%    $(\partial {\cal Y}\cap {\cal N}_r(p))$
    ${\cal Y}\cap {\cal N}_r(p)$ is defined
     by a finite number of analytic functions
     $g_i:\mathbb{R}^2\rightarrow \mathbb{R}$.
   By the implicit function theorem,
    each $g_i(\mathbf{x})=0$ defines a planar curve.
   Then (a) and (b)
    follows from the condition of ${\cal Y}$ being regular open
    and the condition that $r>0$ can be as small as one wishes.
   Hence the local topology at a boundary point $p$
    can be characterized by the number of the aforementioned curves
    that intersect at $p$,
    as is shown in Figure \ref{fig:localTopology}.

   By (a), (b),
    and the fact of ${\cal N}_r(p)$ being regular open,
    ${\cal N}_r(p)\setminus\partial {\cal Y}$
    consists of an even number of disjoint regular open sets.
   Consider two such sets that share a common boundary.
   Suppose both of them are subsets of ${\cal Y}$,
    then it contradicts the fact of ${\cal Y}$ being regular.
   Suppose both of them are subsets of ${\cal Y}^{\perp}$,
    then it contradicts the fact of their common boundary
    being a subset of $\partial{\cal Y}$.
   Hence (c) follows.
 \end{proof}

 % \begin{definition}
 %   The \emph{multigraph of the boundary of a Yin set} ${\cal Y}$
 %    is a multigraph
 %    $G_{\partial {\cal Y}} = \{V_{\partial {\cal Y}}, E_{\partial
 %      {\cal Y}}\}$
 %    constructed as follows,
 %    \begin{enumerate}[()]
 %    \item For each connected components of ${\partial {\cal Y}}$,
 %    \end{enumerate}
 % \end{definition}

 \begin{figure}
   \centering
   \includegraphics[width=0.2\linewidth]{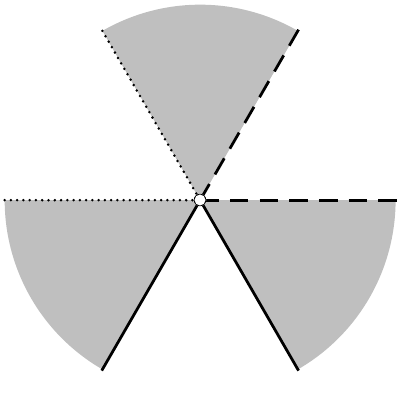}
   \caption{Decomposing the boundary $\partial {\cal Y}$
     of a connected Yin set ${\cal Y}$
     into a set of Jordan curves by disentangling
     the multiple boundary curves
     that intersect at a boundary point $q$ (the hollow dot).
     This example corresponds to
      Figure \ref{fig:localTopology} (b) and (c).
     The shaded fan-shaped wedges represent
      connected components of
      ${\cal Y}\cap {\cal N}_r(q)$,
      and the white fan-shaped wedges those of
      ${\cal Y}^{\perp}\cap {\cal N}_r(q)$.
     Two simple curves incident to $q$
      are assigned to the same Jordan curve
      if they are part of the boundary
      of the same component of ${\cal Y}^{\perp}\cap {\cal N}_r(p)$.
   }
 \label{fig:edgePairing}
\end{figure}

 \begin{figure}
   \centering
   \subfigure[The boundary Jordan curves have proper intersections]{
     \includegraphics[width=0.3\linewidth]{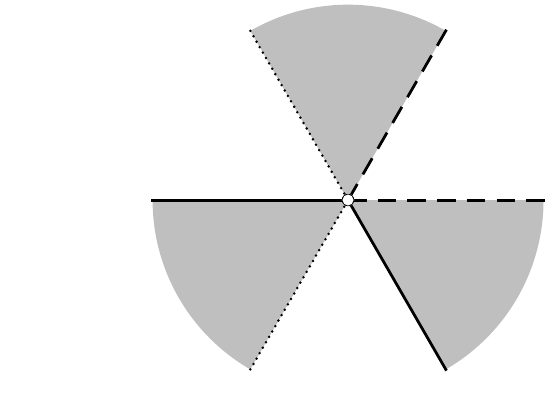}
   }
   \hfill
   \subfigure[${\cal Y}$ becomes disconnected]{
     \includegraphics[width=0.4\linewidth]{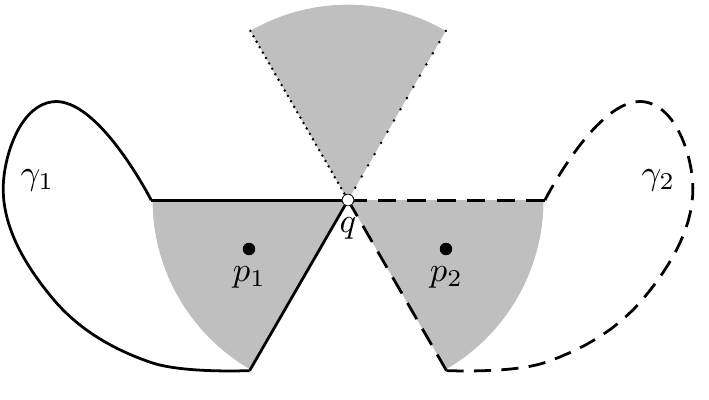}
     }
   \caption{The decomposition method shown in Figure \ref{fig:edgePairing}
     is the only valid choice to decompose
     the boundary of a \emph{connected} Yin set into 
     pairwise almost disjoint Jordan curves,
     because any other choice yields a contradiction.
   }
 \label{fig:edgePairingProof}
\end{figure}

The above lemma on the local topology
 naturally yields a result on the global topology
 of a connected Yin set.

 \begin{theorem}
   \label{thm:uniqueRep}
   For a connected Yin set ${\cal Y}\ne \emptyset, \mathbb{R}^2$,
    its boundary $\partial {\cal Y}$ can be uniquely partitioned
    into a finite set of pairwise almost disjoint Jordan curves.
 \end{theorem}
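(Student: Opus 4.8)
The plan is to convert $\partial\mathcal{Y}$ into a finite, planar, directed multigraph and then read off the Jordan curves from a canonical transition rule at its non-manifold vertices. By Lemma \ref{lem:localTopologyOfBdPtOfYinSet}, every boundary point is either a \emph{manifold} point ($n_c=1$) sitting on a single smooth arc, or a \emph{non-manifold} point ($n_c>1$) at which exactly $n_c(p)$ simple curves cross and divide $\mathcal{N}_r(p)$ into $2n_c(p)$ sectors that alternate between $\mathcal{Y}$ and $\mathcal{Y}^\perp$. Since $\partial\mathcal{Y}$ is closed (boundaries always are) and bounded by hypothesis, it is compact, and semianalyticity then forces only finitely many non-manifold points and finitely many maximal smooth boundary arcs joining them; I take these as the vertex set $V$ and edge set $E$ of a finite multigraph $G$ embedded in $\mathbb{R}^2$. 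Orienting each arc so that $\mathcal{Y}$ lies to its left (Definition \ref{def:interior}), part (c) of the lemma shows that around each $p$ the outgoing and incoming arc-germs (the local rays of $\partial\mathcal{Y}$ at $p$) also alternate, so $d^+(p)=d^-(p)=n_c(p)$. Theorem \ref{thm:VeblenMultigraphDirected} then yields \emph{some} partition of $E$ into directed cycles; what remains is to single out the geometrically correct partition and to upgrade its cycles to genuine Jordan curves.

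For existence I would fix the canonical transition at each vertex: pair an incoming arc-germ with the outgoing arc-germ bounding the same connected component of $\mathcal{Y}^\perp\cap\mathcal{N}_r(p)$, exactly as in Figure \ref{fig:edgePairing}. The alternation of sectors makes this a perfect incoming-to-outgoing matching, and in the disk model it is a \emph{non-crossing} matching of the $2n_c(p)$ germs. Following these transitions (and treating any smooth, manifold-only component of $\partial\mathcal{Y}$ as a ready-made loop) traces out a family of closed oriented walks that partition $E$. I would then prove each walk is a Jordan curve: because the transition is non-crossing, two walks, or a walk and itself, can meet a vertex only by touching and never transversally, so every incidence at a shared vertex is an \emph{improper} intersection in the sense of Section \ref{sec:JordanCurves}, while away from vertices the arcs are disjoint smooth pieces of $\partial\mathcal{Y}$. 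Pairwise almost-disjointness (Definition \ref{def:almostDisjoint}) is then immediate, since intersections occur only at the finitely many non-manifold points and are all improper.

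For uniqueness I would argue that any partition of $\partial\mathcal{Y}$ into pairwise almost disjoint Jordan curves induces, at each non-manifold point $p$, a perfect matching of its $2n_c(p)$ arc-germs, and that the $\mathcal{Y}^\perp$-sector matching is the only admissible one. First, almost-disjointness forbids proper intersections, which rules out every matching whose chords cross in the disk model (Figure \ref{fig:edgePairingProof}(a)). Second, among the non-crossing matchings, any choice other than the $\mathcal{Y}^\perp$-sector one must pair two germs across a $\mathcal{Y}$-sector; I would show that such a pairing pinches $\mathcal{Y}$ at $p$, so that the curves it produces are consistent only with a region in which the two $\mathcal{Y}$-sectors at $p$ belong to different connected components, contradicting the hypothesis that $\mathcal{Y}$ is connected (Figure \ref{fig:edgePairingProof}(b)). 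Hence the transition is forced at every vertex; since the arcs themselves are intrinsic to $\partial\mathcal{Y}$, the resulting partition is unique.

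The main obstacle, I expect, is the simplicity claim in the existence step together with the connectedness argument in the uniqueness step, which are two sides of the same coin. Verifying that each closed walk is injective on its interior, i.e. a bona fide Jordan curve rather than a closed walk that revisits a non-manifold vertex and thereby self-crosses, requires exploiting planarity of the embedding and the fact that the $\mathcal{Y}^\perp$-sector transition is the unique one that both avoids crossings and leaves the $\mathcal{Y}$-side locally connected. I would handle this by a careful local analysis at each vertex, relating the admissible non-crossing matchings to the local connectivity of $\mathcal{Y}$ versus $\mathcal{Y}^\perp$, and, if needed, an induction on the number of non-manifold points that resolves one vertex at a time while preserving connectedness, appealing to the Jordan Curve Theorem (Theorem \ref{thm:jordan}) to keep the global topology under control.
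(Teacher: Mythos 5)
Your proposal follows essentially the same route as the paper's proof: the same multigraph on the non-manifold points with maximal boundary arcs as edges, the same appeal to Veblen's theorem (you use the directed version, Theorem \ref{thm:VeblenMultigraphDirected}, where the paper uses Theorem \ref{thm:VeblenMultigraph} and defers orientation to Theorem \ref{thm:uniqueCases} --- an inessential difference), the same $\mathcal{Y}^{\perp}$-sector pairing rule of Figure \ref{fig:edgePairing}, and the same two-case uniqueness argument of Figure \ref{fig:edgePairingProof}, where crossing pairings force a proper intersection and the $\mathcal{Y}$-sector pairing contradicts connectedness via the Jordan Curve Theorem and the openness of $\mathcal{Y}$. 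The argument is correct, and the difficulty you flag about walks being genuinely simple is handled in the paper by exactly the local analysis you propose.
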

 \begin{proof}
   Without loss of generality,
    we focus on a single connected component of $\partial {\cal Y}$.
   By Lemma \ref{lem:localTopologyOfBdPtOfYinSet},
    a boundary point $q\in\partial{\cal Y}$
    can be classified into two types
    according to $n_c$,
    the number of curves that intersect at $q$.

   If all boundary points satisfy $n_c=1$,
    the condition of $\partial {\cal Y}$ being bounded
    implies that
    this connected component of $\partial {\cal Y}$
    must be a single Jordan curve.
   Hence the statement holds trivially.

   Otherwise a finite number of boundary points satisfy $n_c>1$.
   Then we construct a multigraph $G_{\partial {\cal Y}}$
    by setting its vertex set as $\{q\}$ 
    and by obtaining its edges from 
    dividing $\partial {\cal Y}$ with $\{q\}$.
   By Lemma \ref{lem:localTopologyOfBdPtOfYinSet} (a), (b),
    and $\partial {\cal Y}$ being connected,
    the degree of each vertex in ${G}_{\partial {\cal Y}}$
    is even. 
   Then it follows from Theorem \ref{thm:VeblenMultigraph} that
    we can decompose $\partial {\cal Y}$ into
    edge-disjoint cycles.
   % It remains to show how to perform this decomposition so that
   %  the intersections of two simple cycles are improper and finite.
   As shown in Figure \ref{fig:edgePairing},
    the decomposition is performed by requiring that, at each vertex $q$,
    {two edges incident to $q$ are assigned to the same cycle
    if and only if they belong to the boundary
    of the same connected component of ${\cal Y}^{\perp}\cap {\cal N}_r(q)$}.
   Consequently,
    no edges in different cycles intersect properly
    at each self-intersection, % of $\partial {\cal Y}$
    hence the resulting Jordan curves are pairwise almost disjoint.

   Finally,
    we show that the decomposition in Figure \ref{fig:edgePairing}
    is the only valide choice. % in making the Jordan curves pairwise almost disjoint.
   Consider the other possibilities
    shown in Figure \ref{fig:edgePairingProof}.
   If the two edges in the same cycle are not adjacent
    as in Figure \ref{fig:edgePairingProof} (a),
    then two cycles would have a proper intersection at $q$,
    which contradicts the condition of the Jordan curves
    having no proper intersections.
   As for the last possibility
    shown in Figure \ref{fig:edgePairingProof} (b),
    two edges in the same cycle are adjacent
    but they both belong to the boundary of some connected component
    of ${\cal Y}\cap {\cal N}_r(p)$.
   Then we can draw Jordan curves 
    ${\gamma}_1, {\gamma}_2\subset \partial {\cal Y}$
    that contain them.
   The simpleness of a Jordan curve implies
    ${\gamma}_1 \ne {\gamma}_2$.
   Then there exist two points $p_1,p_2\in {\cal Y}$
    that belong to the bounded complements
    of ${\gamma}_1$ and ${\gamma}_2$, respectively.
   Because ${\cal Y}$ is connected,
    there exists a path within ${\cal Y}$
    that connects $p_1$ and $p_2$.
   By Theorem \ref{thm:jordan},
    this path has to intersect ${\gamma}_1$
    at some point, say $p_c$.
   The construction of this path implies
    $p_c\in {\cal Y}$;
    the construction of ${\gamma}_1$
    implies $p_c\in \partial {\cal Y}$.
   This is a contradiction because ${\cal Y}$ is open.
   % Therefore the pairing of incident edges
   %  shown in Fig. \ref{fig:edgePairing} is the only valid choice
   %  and consequently, the decomposition of $\partial {\cal Y}$
   %  into Jordan curves is unique.
  \end{proof}

The above proof hinges on the fact
 of a Yin set being open,
 so Theorem \ref{thm:uniqueRep} may not hold
 for the closure of a Yin set.
As shown in Figure \ref{fig:boundaryDecompositions},
 the decomposition of the boundary of a regular closed set 
 is \emph{not} unique.
This is a main reason that
  we do not model physically meaningful regions
  with regular closed sets.

 \begin{figure}
   \centering
   \subfigure[${\cal Y}={\cal Y}_1\cup{\cal Y}_2$;
   ${\cal Y}_1\cap{\cal Y}_2=\emptyset$; $q_1,q_2\not\in {\cal Y}$]{
     \includegraphics[width=0.38\linewidth]{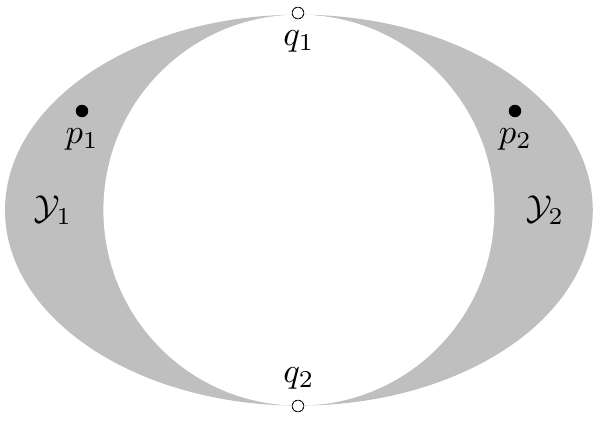}
   }
   \hfill
   \subfigure[${\cal Y}^-={\cal Y}_1^-\cup{\cal Y}_2^-$;
    $q_1,q_2\in {\cal Y}^-$
%    and hence any two points in ${\cal Y}^-$ 
   ]{
     \includegraphics[width=0.38\linewidth]{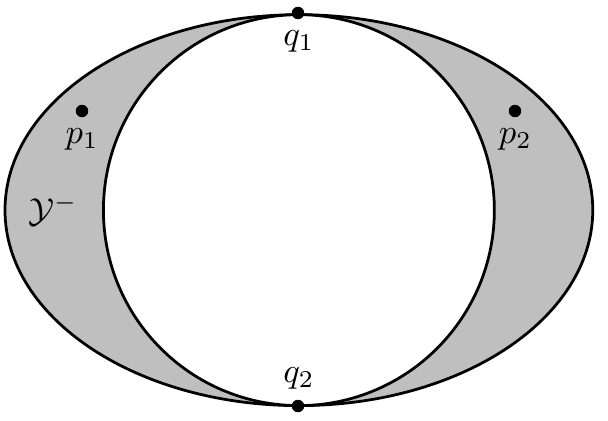}
   }
   \caption{Theorem \ref{thm:uniqueRep} does not hold
     for the closure of a Yin set.
     In (a), the Yin set ${\cal Y}$ consists of two disjoint components
     ${\cal Y}_1$ and ${\cal Y}_2$
     that share two common boundary points $q_1$ and $q_2$.
     In (b), the closure of ${\cal Y}$,
      ${\cal Y}^-={\cal Y}\cup\partial {\cal Y}$,
      becomes connected.
     The solid curves indicate that ${\cal Y}^-$ is a regular closed set.
     The crucial difference is that,
      after the closure of ${\cal Y}$, 
      the two points $p_1,p_2$ that previously
      belong to the two disjoint Yin sets in (a)
      can now be joined by a path in ${\cal Y}^-$.
     Consequently,
      the decomposition of $\partial{\cal Y}^-$ 
       into a set of pairwise almost disjoint Jordan curves
       is not unique any more.
   }
 \label{fig:boundaryDecompositions}
\end{figure}

To relate ${\cal Y}$ to its boundary Jordan curves,
 we first define a partial order on Jordan curves.

 \begin{definition}[Inclusion of Jordan curves]
   \label{def:inclusionRelation}
   A Jordan curve $\gamma_k$
    is said to \emph{include} $\gamma_{\ell}$,
    written as
    $\gamma_{k}\ge \gamma_{\ell}$ or $\gamma_{\ell}\le \gamma_{k}$,
    if and only if
    the bounded complement of $\gamma_{\ell}$
    is a subset of that of $\gamma_{k}$.
   If $\gamma_k$ includes $\gamma_{\ell}$ and
    $\gamma_k\ne \gamma_{\ell}$,
    we write $\gamma_{k}> \gamma_{\ell}$
    or $\gamma_{\ell}< \gamma_{k}$.
 \end{definition}

Definitions \ref{def:covering} and \ref{def:inclusionRelation}
 yield a covering relation for Jordan curves.

 \begin{definition}[Covering of Jordan curves]
   \label{def:CoveringJordanCurves}
   Let ${\cal J}$ denote a poset of Jordan curves
    with inclusion as the partial order.
   We say 
    $\gamma_k$ \emph{cover}s $\gamma_{\ell}$ in ${\cal J}$
    and write `$\gamma_{k} \succ \gamma_{\ell}$'
    or `$\gamma_{\ell}\prec \gamma_{k}$'
    if $\gamma_{\ell}< \gamma_{k}$ and no elements $\gamma\in {\cal J}$
    satisfies $\gamma_{\ell}< \gamma<\gamma_{k}$.
 \end{definition}

 \begin{figure}
   \centering
   \includegraphics[width=0.2\linewidth]{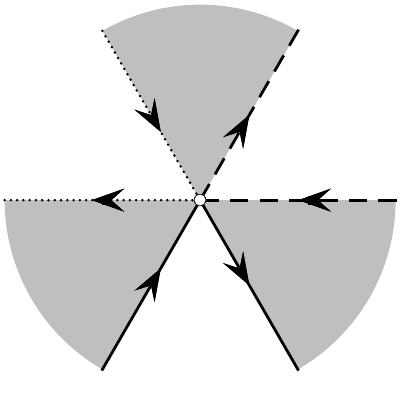}
   \caption{Decomposing connected $\partial {\cal Y}$
      into a set of oriented Jordan curves.
     The connected Yin set ${\cal Y}$ is represented
      by shaded regions.
     The Jordan curves determined
      by the choice shown in Figure \ref{fig:edgePairing}
      can be uniquely oriented by requiring that
      ${\cal Y}$ always lies at the left of each curve.
   }
 \label{fig:edgePairingDirected}
\end{figure}

 \begin{figure}
   \centering
   \subfigure[two incomparable negatively oriented Jordan curves]{
     \includegraphics[width=0.3\linewidth]{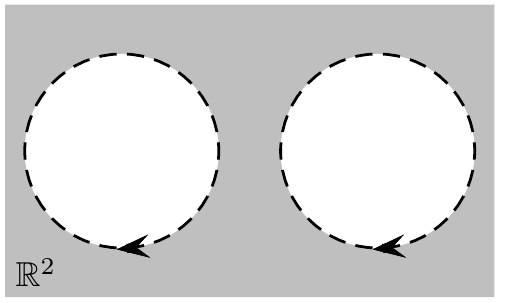}
   }
   \hfill
   \subfigure[two comparable negatively oriented Jordan curves]{
     \includegraphics[width=0.3\linewidth]{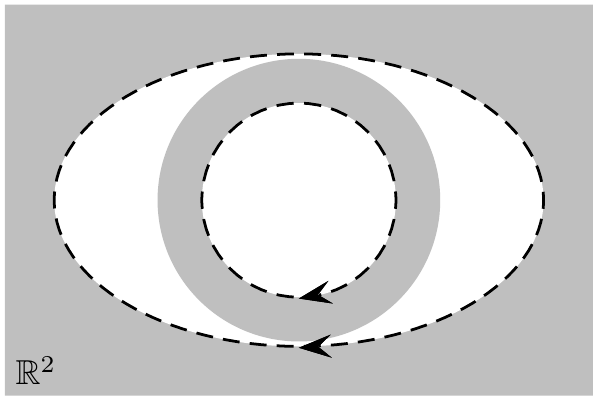}
   }

   \subfigure[a positively oriented Jordan curve covers a
   negatively oriented one]{
     \includegraphics[width=0.3\linewidth]{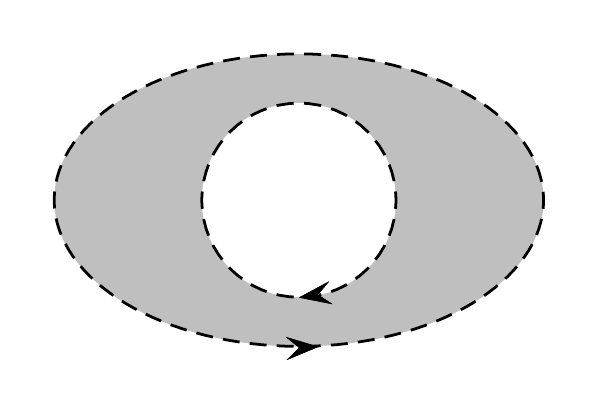}
   }
   \hfill
   \subfigure[a negatively oriented Jordan curve covers a
   positively oriented one]{
     \includegraphics[width=0.3\linewidth]{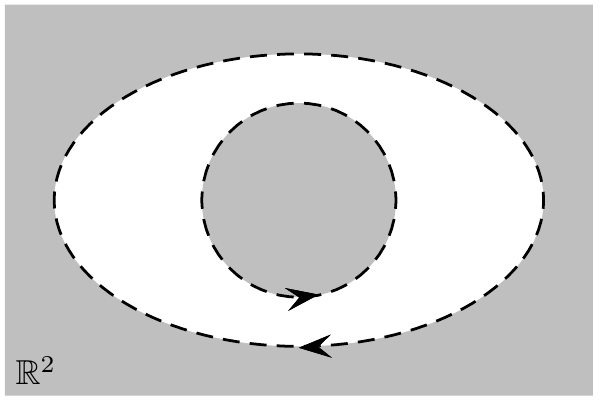}
   }
   \hfill
   \subfigure[two incomparable Jordan curves with different orientations]{
     \includegraphics[width=0.3\linewidth]{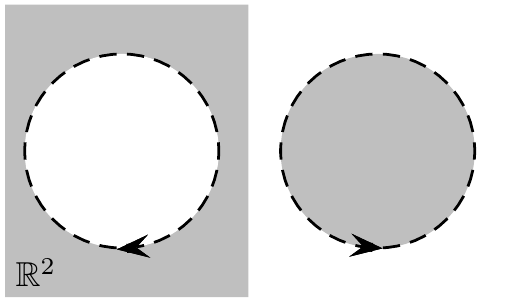}
   }

   \subfigure[two incomparable positively oriented Jordan curves]{
     \includegraphics[width=0.3\linewidth]{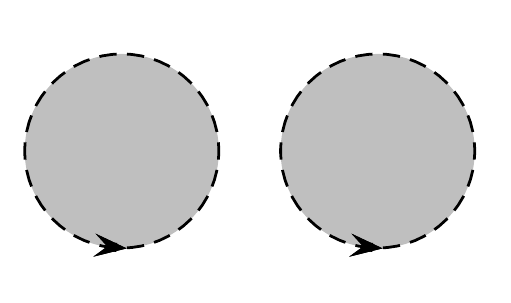}
   }
   \hfill
   \subfigure[two comparable positively oriented Jordan curves]{
     \includegraphics[width=0.3\linewidth]{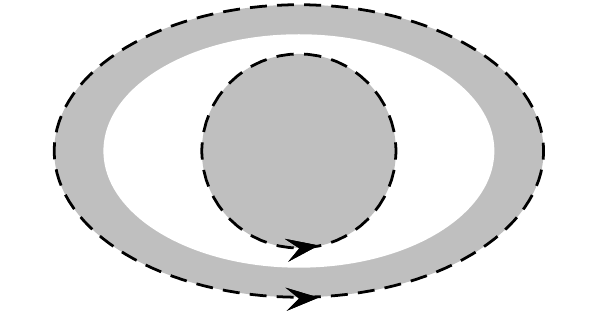}
   }
   \caption{Enumerating all cases of
     two oriented almost disjoint Jordan curves
     $\gamma_1$ and $\gamma_2$
     with respect to their orientations and inclusion relations.
     This is useful in proving Theorem \ref{thm:uniqueCases}:
      if $\gamma_1$ and $\gamma_2$
      are in the unique decomposition
      of the boundary of a connected Yin set ${\cal Y}$
      and we require
      that ${\cal Y}$ always be at the left side of
      both $\gamma_1$ and $\gamma_2$,
      then only (a) and (c) are valid, 
      because (b), (e), and (g) contradict
      the fact of both $\gamma_1$ and $\gamma_2$
      are part of the boundary of ${\cal Y}$
      and (d) and (f) contradict the condition
      of ${\cal Y}$ being connected.
   }
 \label{fig:enumerationOfTwoJCs}
\end{figure}

Now we state the most important result of this subsection.

 \begin{theorem}
   \label{thm:uniqueCases}
   Suppose a Yin set ${\cal Y}\ne \emptyset, \mathbb{R}^2$
    is connected.
   Then the Jordan curves as the unique decomposition
    of $\partial {\cal Y}$,
    given by the method shown in Figure \ref{fig:edgePairing}, 
   % Denote by ${\cal J}_{\partial {\cal Y}}$ %=\{\gamma_j\}$
   %  the set of Jordan curves as the unique decomposition
   %  of its boundary, c.f. Theorem \ref{thm:uniqueRep}.
   % The Jordan curves in ${\cal J}_{\partial {\cal Y}}$
    can further be uniquely oriented such that
    \begin{equation}
      \label{eq:intersectInteriors}
      {\cal Y} = \bigcap_{\gamma_j\in {\cal J}_{\partial {\cal Y}}} \Int(\gamma_j).
    \end{equation}
    ${\cal J}_{\partial {\cal Y}}$,
     the set of oriented boundary Jordan curves of ${\cal Y}$,
     must be one of the two types,
     \begin{equation}
       \label{eq:decomTypes}
       \renewcommand{\arraystretch}{1.2}
       \left\{
         \begin{array}{ll}
           {\cal J}^-
           =\{\gamma^-_1, \gamma^-_2, \ldots, \gamma^-_{n_-}\},
           & n_-\ge 1,
           \\
           {\cal J}^+
           =\{\gamma^+,\gamma^-_1, \gamma^-_2, \ldots, \gamma^-_{n_-}\},
           & n_-\ge 0,
         \end{array}
         \right.
     \end{equation}
     where all $\gamma^-_j$'s are negatively oriented,
     mutally incomparable with respect to inclusion.
     For ${\cal J}^+$, we also have
     \begin{equation}
       \label{eq:negCoveredByPos}
       \forall j=1,2,\ldots,n_-,\ \ 
       \gamma_j^- \prec \gamma^+.
     \end{equation}
 \end{theorem}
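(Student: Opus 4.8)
The plan is to proceed in three stages: fix the orientation, establish the set identity (\ref{eq:intersectInteriors}), and then classify the poset structure by reducing to the pairwise analysis of Figure \ref{fig:enumerationOfTwoJCs}. First I would orient the curves. By the edge-pairing rule of Figure \ref{fig:edgePairing} used in the proof of Theorem \ref{thm:uniqueRep}, along each Jordan curve $\gamma_j$ the set ${\cal Y}^{\perp}$ lies consistently on one side, hence ${\cal Y}$ lies consistently on the other. I would orient every $\gamma_j$ so that ${\cal Y}$ is always on its left, which is exactly the choice of Figure \ref{fig:edgePairingDirected} and, by Definition \ref{def:interior}, places the local ${\cal Y}$-side of each $\gamma_j$ inside $\Int(\gamma_j)$. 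This orientation is forced once (\ref{eq:intersectInteriors}) is demanded: that identity yields ${\cal Y}\subseteq\Int(\gamma_j)$ for every $j$, so $\Int(\gamma_j)$ must be the component adjacent to ${\cal Y}$, which pins down the orientation uniquely.

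Next I would prove (\ref{eq:intersectInteriors}). For ``$\subseteq$'', fix $\gamma_j$; since $\gamma_j\subseteq\partial{\cal Y}$ and ${\cal Y}$ is open, ${\cal Y}$ is disjoint from $\gamma_j$, and being connected it lies in a single component of $\mathbb{R}^2\setminus\gamma_j$ by Theorem \ref{thm:jordan}; the local picture at any boundary point of $\gamma_j$ shows this component is the left one, i.e.\ $\Int(\gamma_j)$, so ${\cal Y}\subseteq\bigcap_j\Int(\gamma_j)=:{\cal R}$. For ``$\supseteq$'', I would argue by contradiction. The set ${\cal R}$ is open with $\partial{\cal R}\subseteq\bigcup_j\gamma_j=\partial{\cal Y}$, and since ${\cal R}$ avoids each $\gamma_j$ we have ${\cal R}\setminus{\cal Y}={\cal R}\cap{\cal Y}^{\perp}$. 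If this is nonempty, some component $S$ of it has nonempty boundary inside $\partial{\cal Y}$; choosing a simple boundary point $b\in\partial S$ (possible since by Lemma \ref{lem:localTopologyOfBdPtOfYinSet} only finitely many points of $\partial{\cal Y}$ are non-simple), the unique curve $\gamma_k\ni b$ has ${\cal Y}$ on its left, so the ${\cal Y}^{\perp}$-region at $b$ that belongs to $S$ sits on the right of $\gamma_k$, i.e.\ outside $\Int(\gamma_k)$, contradicting $S\subseteq{\cal R}\subseteq\Int(\gamma_k)$.

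Then I would classify the structure pairwise. For any two curves $\gamma_1,\gamma_2\in{\cal J}_{\partial{\cal Y}}$, which are almost disjoint by Theorem \ref{thm:uniqueRep}, their bounded complements are either disjoint (incomparable) or nested, giving the seven configurations of Figure \ref{fig:enumerationOfTwoJCs}. Using ${\cal Y}\subseteq\Int(\gamma_1)\cap\Int(\gamma_2)$, that both curves lie on $\partial{\cal Y}$, and that ${\cal Y}$ is connected and nonempty, I would eliminate five of them: cases (b), (e), (g) force a curve to sit away from $\overline{{\cal Y}}$, or force an annular region to be simultaneously on the ${\cal Y}$-side of one curve and the ${\cal Y}^{\perp}$-side of the other, contradicting that both lie on $\partial{\cal Y}$; cases (d), (f) would require a path inside the open connected set ${\cal Y}$ to cross $\partial{\cal Y}$ in order to reach points forced to lie near both curves, contradicting connectedness. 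Only (a) (two incomparable negatives) and (c) (a positive covering a negative) survive.

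Finally I would assemble the global poset. Since any pair containing a positively oriented curve must be case (c), two positive curves are impossible, so ${\cal J}_{\partial{\cal Y}}$ contains at most one positive curve, while $\partial{\cal Y}\ne\emptyset$ forces at least one curve. If none is positive, every pair is case (a), giving ${\cal J}^-$ with $n_-\ge1$ mutually incomparable negatives. If one curve $\gamma^+$ is positive, every other curve is negative and paired with $\gamma^+$ via (c), so $\gamma_j^-<\gamma^+$; the mutual incomparability of the negatives leaves no curve strictly between $\gamma_j^-$ and $\gamma^+$, whence $\gamma_j^-\prec\gamma^+$, yielding ${\cal J}^+$ and (\ref{eq:negCoveredByPos}). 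I expect the ``$\supseteq$'' inclusion in (\ref{eq:intersectInteriors}) to be the main obstacle, since it is the only step needing a genuinely global topological argument together with careful handling of the improper intersections permitted by Definition \ref{def:almostDisjoint}; the pairwise enumeration, though lengthy, is routine once the local topology of Lemma \ref{lem:localTopologyOfBdPtOfYinSet} and the left-orientation convention are in hand.
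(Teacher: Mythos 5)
Your proposal is correct and takes essentially the same route as the paper's proof: you orient each Jordan curve of the Theorem \ref{thm:uniqueRep} decomposition so that ${\cal Y}$ always lies on its left (the paper packages this as a directed multigraph together with Theorem \ref{thm:VeblenMultigraphDirected}), and you then eliminate configurations (b), (d), (e), (f), (g) of Figure \ref{fig:enumerationOfTwoJCs} by the same connectedness and boundary contradictions to arrive at the two types (\ref{eq:decomTypes}) and the covering relation (\ref{eq:negCoveredByPos}). The one notable difference is that you prove both inclusions of (\ref{eq:intersectInteriors}) explicitly --- in particular the ``$\supseteq$'' direction via the boundary-point contradiction at a simple point of $\partial S$ --- whereas the paper simply reads the identity off Definition \ref{def:interior} and the surviving configurations, so your treatment is, if anything, more complete at that step.
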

 \begin{proof}
   As in the proof of Theorem \ref{thm:uniqueRep},
    we construct a multigraph $G_{\partial {\cal Y}}$
    from $\partial {\cal Y}$.
   $G_{\partial {\cal Y}}$ is further made a directed multigraph
    via orienting $\partial {\cal Y}$ so that ${\cal Y}$
    always lies at the left side of any oriented Jordan curve.
   As shown in Figure \ref{fig:edgePairingDirected},
    the indegree and outdegree of any vertex
    in $G_{\partial {\cal Y}}$ equals.
   By Theorem \ref{thm:VeblenMultigraphDirected},
    there exists at least one directed cycle decomposition
    of $\partial {\cal Y}$.
   Then the uniqueness of the directed cycle decomposition
    follows from the uniqueness of orienting the Jordan curves
    in Figure \ref{fig:edgePairingDirected}.

   Consider the case ${\cal J}_{\partial {\cal Y}}={\cal J}^-$.
   For $n_->1$, suppose that
    $\gamma_1^-$ and $\gamma_2^-$ is comparable,
    as shown in Figure \ref{fig:enumerationOfTwoJCs}(b).
   According to the orientation,
    ${\cal Y}$ must lie at the left side of both $\gamma_1^-$
    and $\gamma_2^-$,
    but this is impossible because both $\gamma_1^-$
    and $\gamma_2^-$ are part of the boundary of ${\cal Y}$.
%   Hence (I) holds.
   % for any two negatively oriented 
   %  $\gamma_1, \gamma_2 \in {\cal J}$,
   %  they cannot be comparable.
   Consequently,
    (\ref{eq:intersectInteriors}) follows
    from Definition \ref{def:interior};
    see Figure \ref{fig:enumerationOfTwoJCs}(a).

   Consider the case ${\cal J}_{\partial {\cal Y}}={\cal J}^+$.
   If $n_-=0$, then (\ref{eq:negCoveredByPos}) holds vacuously
    and (\ref{eq:intersectInteriors}) holds trivially
    from Definition \ref{def:interior}.
   For $n_->0$,
    suppose (\ref{eq:negCoveredByPos}) did not hold
    for a negatively oriented Jordan curve $\gamma_1^-$.
   Then the almost disjointness implies that
    either $\gamma^+\prec \gamma_1^-$
    or they are not comparable.
   Suppose the former case holds, 
    a path from one point at the left of $\gamma^+$
    to another point at the left of $\gamma_1^-$
    must contain some points not in ${\cal Y}$,
    which contradicts the condition of ${\cal Y}$ being connected;
    see Figure \ref{fig:enumerationOfTwoJCs} (d).
   The latter case does not hold either
    because it contradicts the fact
    that $\gamma^+$ is part of the boundary of ${\cal Y}$;
    see Figure \ref{fig:enumerationOfTwoJCs} (e).
   Hence (\ref{eq:negCoveredByPos}) must hold.
   By arguments in the previous paragraph,
    the negatively oriented Jordan curves
    must also be pairwise incomparable.
   Therefore, (\ref{eq:intersectInteriors}) follows
    from Definition \ref{def:interior};
    see Figure \ref{fig:enumerationOfTwoJCs} (c).

   Suppose ${\cal J}_{\partial {\cal Y}}$ contains
    two positively oriented Jordan curves $\gamma_1,\gamma_2$.
   Then their almost disjointness implies
    that $\Int(\gamma_1)$ is either in the unbounded complement
    or the bounded complement of $\gamma_2$.
   By similar arguments, the former contradicts
    the fact of ${\cal Y}$ being connected,
    as in Figure \ref{fig:enumerationOfTwoJCs} (f),
    and the latter contradicts
    the fact of both $\gamma_1$ and $\gamma_2$
    are part of the boundary of ${\cal Y}$,
    as in Figure \ref{fig:enumerationOfTwoJCs} (f).
   Hence ${\cal J}_{\partial {\cal Y}}$ contains
    at most one positively oriented Jordan curve.
   This completes the proof.
 \end{proof}

\begin{corollary}
  \label{coro:JordanCurveRepGeneral}
  Each Yin set ${\cal Y}\ne \emptyset, \mathbb{R}^2$
   can be uniquely expressed as
   \begin{equation}
     \label{eq:JordanCurveRepGeneral}
     {\cal Y}= {\bigcup}_{j}^{\perp\perp} \bigcap_i  \mathrm{int}\left(\gamma_{j,i}\right),
   \end{equation}
   where $j$ is the index of connected components of ${\cal Y}$
   and $\gamma_{j,i}$'s are oriented Jordan curves
   that are pairwise almost disjoint.
\end{corollary}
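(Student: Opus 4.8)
The plan is to reduce the general, possibly disconnected, case to the connected case already settled by Theorem~\ref{thm:uniqueCases}, and then reassemble the pieces through the regularized union $\cup^{\perp\perp}$, which is precisely the join of the Boolean algebra $\mathbf{Y}$ in Theorem~\ref{thm:YinSetsFormABooleanAlgebra}. First I would decompose $\mathcal{Y}$ into its connected components $\{\mathcal{Y}_j\}$. Because $\mathcal{Y}$ is open, each component is open and path-connected; because $\partial\mathcal{Y}$ is bounded and semianalytic, it is carved into finitely many analytic arcs, so only finitely many components can arise and at most one of them is unbounded. Thus $\mathcal{Y}=\bigcup_j \mathcal{Y}_j$ is a finite disjoint union.

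Second I would verify that each component $\mathcal{Y}_j$ is itself a connected Yin set, so that Theorem~\ref{thm:uniqueCases} applies. Openness, semianalyticity, and boundedness of the boundary are inherited directly from $\mathcal{Y}$; the only point needing an argument is regularity, i.e. $\mathcal{Y}_j=\mathcal{Y}_j^{\perp\perp}$. For this I would take a hypothetical $x\in\mathcal{Y}_j^{\perp\perp}\setminus\mathcal{Y}_j$ and rule it out case by case: $x$ cannot lie in another component (those are open and disjoint from $\mathcal{Y}_j$, hence contain no limit points of it), nor in $\mathcal{Y}^{\perp}$ (a neighborhood would miss $\mathcal{Y}_j$), and if $x\in\partial\mathcal{Y}$ then Lemma~\ref{lem:localTopologyOfBdPtOfYinSet}(c) guarantees points of $\mathcal{Y}^{\perp}$ arbitrarily near $x$, contradicting $x$ being interior to the closure of $\mathcal{Y}_j$.

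With each $\mathcal{Y}_j$ a connected Yin set, Theorem~\ref{thm:uniqueCases} supplies the unique family of oriented, pairwise almost disjoint Jordan curves $\{\gamma_{j,i}\}_i$ with $\mathcal{Y}_j=\bigcap_i \Int(\gamma_{j,i})$. Reassembly is then immediate: since $\mathcal{Y}$ is regular open, $\mathcal{Y}=\mathcal{Y}^{\perp\perp}=\bigl(\bigcup_j\mathcal{Y}_j\bigr)^{\perp\perp}=\bigcup_j^{\perp\perp}\mathcal{Y}_j$, which upon substitution yields (\ref{eq:JordanCurveRepGeneral}). Uniqueness follows from two independent uniqueness statements stacked together: the decomposition of an open set into connected components is topologically unique, and Theorem~\ref{thm:uniqueCases} fixes the oriented curves of each component uniquely.

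The step I expect to be the main obstacle is establishing pairwise almost disjointness \emph{across} components, since two distinct components may share a boundary point $q$ (the pinch points of Figure~\ref{fig:YinSets}). Curves $\gamma_{j,i}$ and $\gamma_{k,i'}$ with $j\neq k$ can meet only on $\partial\mathcal{Y}$, and I would show that any such meeting is improper rather than proper by appealing to the local topology of Lemma~\ref{lem:localTopologyOfBdPtOfYinSet} together with the edge-pairing rule of Figure~\ref{fig:edgePairing}: the curves are assembled so that each component remains on the prescribed side, which precludes a transversal crossing at $q$. Collecting the within-component almost disjointness from Theorem~\ref{thm:uniqueRep} with this cross-component argument gives that the entire family $\{\gamma_{j,i}\}$ is pairwise almost disjoint, completing the proof.
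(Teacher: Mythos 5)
Your proof is correct and takes essentially the same route as the paper, whose entire proof reads ``The conclusion follows from applying Theorem~\ref{thm:uniqueCases} to each connected component of ${\cal Y}$.'' The details you supply beyond that one line --- finiteness of the component decomposition, verifying each component is itself a (regular open semianalytic) Yin set, reassembly via $\cup^{\perp\perp}$, and the cross-component almost-disjointness argument at pinch points --- are correct fillings of steps the paper leaves implicit.
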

\begin{proof}
  The conclusion follows from
  applying Theorem \ref{thm:uniqueCases} to each connected component
  of ${\cal Y}$.
\end{proof}

We illustrate (\ref{eq:intersectInteriors})
 and (\ref{eq:JordanCurveRepGeneral})
 by the two distinct types of Yin sets
 in Figure \ref{fig:OrientYinSets}.
 % of which subplot (b) illustrate the particular case
 % of a Yin set having multiple connected components.

 \begin{figure}
   \centering
   \subfigure[a connected Yin set]{
     \includegraphics[width=0.45\linewidth]{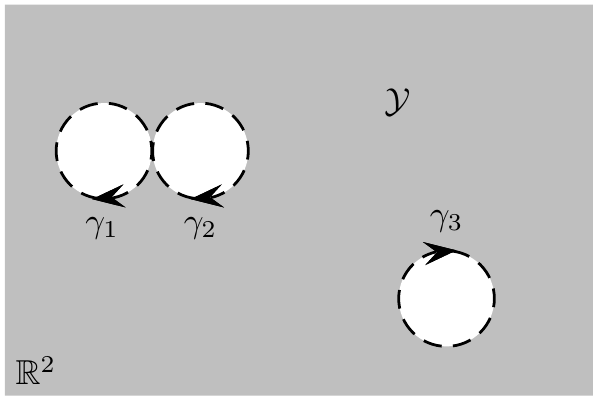}
   }
   \hfill
   \subfigure[a Yin set with four connected components]{
     \includegraphics[width=0.45\linewidth]{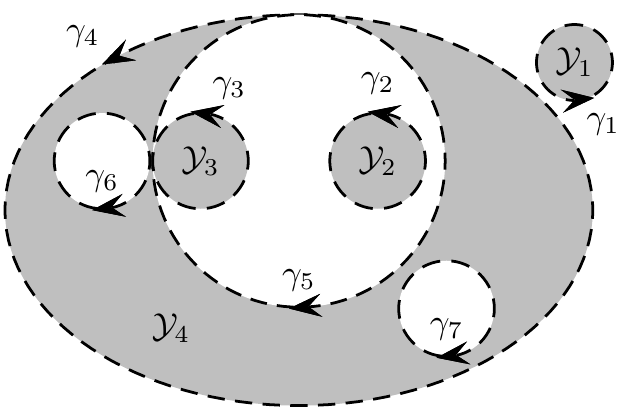}
   }
   \caption{Orienting boundary Jordan curves
     of the Yin sets in Figure \ref{fig:YinSets}
     as illustrations of the two types of connected Yin sets
      classified in Theorem \ref{thm:uniqueCases}.
     In subplot (a), ${\cal Y}=\bigcap_{j=1}^3\Int(\gamma_j)$;
     in subplot (b), ${\cal Y}=\bigcup_{i=1}^4{\cal Y}_i
     = \Int(\gamma_1)\cup\Int(\gamma_2)\cup\Int(\gamma_3)\cup
     \left[\bigcap_{j=4}^7\Int(\gamma_j)\right]$.
     By Theorem \ref{thm:uniqueCases},
      the boundaries of the connected Yin sets ${\cal Y}$ in (a)
      and ${\cal Y}_4$ in (b)
      are of the types ${\cal J}^-$ and ${\cal J}^+$, respectively.
   }
 \label{fig:OrientYinSets}
\end{figure}

By results on the global topology,
 it is straightforward
 to identify the Betti numbers of a Yin set
 with the numbers of oriented Jordan curves in its representation.

\begin{corollary}
  \label{coro:BettiNumbers}
  The number of holes in a connected Yin set
   is the number of negatively oriented Jordan curves
   in the unique expression (\ref{eq:intersectInteriors}).
  The number of connected components
   in a bounded Yin set is the number
   of positively oriented Jordan curves
   in the unique expression (\ref{eq:JordanCurveRepGeneral}).
\end{corollary}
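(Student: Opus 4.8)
The plan is to read off both counts directly from the structural dichotomy of Theorem \ref{thm:uniqueCases}, passing from a general Yin set to its components through Corollary \ref{coro:JordanCurveRepGeneral}. For the statement about holes, I would fix a connected Yin set ${\cal Y}\ne\emptyset,\mathbb{R}^2$ and invoke Theorem \ref{thm:uniqueCases} to write its oriented boundary decomposition as either ${\cal J}^-$ or ${\cal J}^+$. In both cases the negatively oriented curves $\gamma_1^-,\ldots,\gamma_{n_-}^-$ are mutually incomparable and pairwise almost disjoint, so their bounded complements $D_1,\ldots,D_{n_-}$ are pairwise disjoint open regions. The key identification is that these $D_j$ are precisely the bounded connected components of the exterior ${\cal Y}^{\perp}$: for type ${\cal J}^-$ one has ${\cal Y}^{\perp}=\bigcup_j D_j$, while for type ${\cal J}^+$ the set ${\cal Y}^{\perp}$ adds only the single unbounded component lying outside $\gamma^+$. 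Since each $D_j$ is exactly one hole and the correspondence $\gamma_j^-\mapsto D_j$ is a bijection, the number of holes equals $n_-$, the number of negatively oriented curves in (\ref{eq:intersectInteriors}).

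For the count of connected components, I would apply Corollary \ref{coro:JordanCurveRepGeneral} to express a bounded Yin set as the regularized union $\bigcup_j^{\perp\perp}$ of its connected components, each itself a bounded connected Yin set. The decisive observation is that a connected Yin set of type ${\cal J}^-$ must be unbounded, because each $\Int(\gamma_j^-)$ is the unbounded component of the complement of $\gamma_j^-$ and a finite intersection of such sets still contains a neighborhood of infinity. Therefore every bounded component is forced to be of type ${\cal J}^+$, contributing exactly one positively oriented Jordan curve $\gamma^+$. As distinct components yield distinct such curves, and as these exhaust the positively oriented curves appearing in (\ref{eq:JordanCurveRepGeneral}), the number of positively oriented Jordan curves equals the number of connected components.

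The step I expect to be delicate is pinning down the right notion of \emph{hole} in the presence of improper intersections, rather than the routine bookkeeping above. When two negatively oriented boundary curves meet at an improper intersection (for instance, two mutually tangent holes), their closed bounded complements share a single boundary point, so ${\cal Y}$ itself remains connected around that point and its first singular homology can have rank strictly less than $n_-$; only after passing to the open exterior ${\cal Y}^{\perp}$, where the tangency point is deleted, do the two removed regions separate into distinct components. Thus the clean bijection between holes and negatively oriented curves should be phrased in terms of bounded components of ${\cal Y}^{\perp}$, and I would lean on the incomparability and almost-disjointness furnished by Theorem \ref{thm:uniqueCases} to guarantee this bijection in all degenerate cases. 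For the generic case where the boundary curves are strictly disjoint, one may alternatively exhibit a deformation retraction of ${\cal Y}$ onto a wedge of $n_-$ circles and recover the first Betti number directly, confirming that the two viewpoints agree away from improper intersections.
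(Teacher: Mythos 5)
Your proposal is correct, and it supplies considerably more than the paper itself does: the paper ``proves'' Corollary \ref{coro:BettiNumbers} only by the one-line remark that it follows from the topological stratification of the Yin space and the natural correspondence of holes to negatively oriented Jordan curves, implicitly relying on Theorem \ref{thm:uniqueCases} and Corollary \ref{coro:JordanCurveRepGeneral} exactly as you do. Your bookkeeping fleshes out that implicit argument faithfully: bounded components must be of type ${\cal J}^+$ because a finite intersection of unbounded complements of bounded curves contains a neighborhood of infinity, so positively oriented curves biject with connected components; and incomparability plus almost disjointness make the bounded complements $D_1,\ldots,D_{n_-}$ pairwise disjoint open sets. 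Where you genuinely go beyond the paper is the delicate point about improper intersections, and you are right on the mathematics: for a Yin set with two tangentially touching holes (as in Figure \ref{fig:YinSets}(a)), the set-theoretic complement ${\cal Y}'$ is connected through the tangency point, every loop in ${\cal Y}$ has equal winding number around the two removed regions (winding number is constant on the connected set ${\cal Y}'$), and the first singular Betti number of ${\cal Y}$ is strictly smaller than $n_-$; only in the regularized complement ${\cal Y}^{\perp}$, where the tangency point is deleted, do the bounded components biject with the $\gamma_j^-$. So the corollary holds precisely under your reading of ``hole'' as a bounded component of ${\cal Y}^{\perp}$ --- a physically apt notion, since ${\cal Y}^{\perp}$ models the complementary fluid phase --- and your observation quietly sharpens the paper's looser surrounding claim that the ``Betti numbers'' are identified with curve counts, which for $b_1$ in the singular-homology sense fails at improper intersections. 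One step worth making explicit in your write-up: to see that each $D_j$ is a \emph{full} component of ${\cal Y}^{\perp}$, note that no point of $\partial{\cal Y}$ lies in the interior of ${\cal Y}'$ (every neighborhood of a boundary point meets ${\cal Y}$, by regularity via Corollary \ref{coro:brdPtOfRegularSets}), so that ${\cal Y}^{\perp}$ equals exactly the disjoint union of the $D_j$, plus the unbounded piece outside $\gamma^+$ in the ${\cal J}^+$ case; the component count is then immediate.
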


This simple result is due to the topological stratification
 of the Yin space
 and the natural correspondence of 
 holes to negatively oriented Jordan curves.

\subsection{$\mathbb{J}$: 
representing Yin sets via realizable spadjors}
\label{sec:spadjors}

% For the purpose of representing Yin sets,
%  we assume for the rest of the paper
%  that a Jordan curve is always piecewise smooth.
Our starting point is the following acronym.

 \begin{definition}%[Spadjors]
   \label{def:spadjor}
   A \emph{spadjor} %, written ${\cal J}^c$,
    is a nonempty set of pairwise almost disjoint Jordan curves
    with orientations.
 \end{definition}

Not all spadjors are useful for representing Yin sets.
The global topology of Yin sets in Theorem \ref{thm:uniqueCases}
 and Corollary \ref{coro:JordanCurveRepGeneral}
 naturally suggests that we should limit our attention
 to certain types of spadjors.
% for representing a Yin set via its boundary Jordan curves.

 \begin{definition}%[Atom spadjors]
   \label{def:atomSpadjors}
   An \emph{atom spadjor} is a spadjor
    that consists of at most one positively oriented Jordan curve
    $\gamma^+$
    and a finite number of negatively oriented
    Jordan curves $\gamma^-_{1}$, $\gamma^-_{2}$, $\ldots$,
    $\gamma^-_{n_-}$ such that
    \begin{enumerate}[(a)]
    \item $\gamma^-_{j}$'s are pairwise incomparable
      with respect to inclusion,
    \item $\gamma^-_{\ell}\prec \gamma^+$ %in ${\cal J}^c$
      for each $\ell=1,2,\ldots,n_-$.
    \end{enumerate}
 \end{definition}

Since a spadjor cannot be an empty set,
 $n_- = 0$ implies the presence of $\gamma^+$ and 
 the absence of $\gamma^+$ implies $n_->0$.
By definition,
 a spadjor is in the form of either ${\cal J}^-$
 or ${\cal J}^+$ in (\ref{eq:decomTypes}).

The following {boundary-to-interior map} $\rho$
 assigns to each atom spadjor a connected Yin set,
 \begin{equation}
   \label{eq:boundaryToInteriorMapSpadjor}
   \rho({\cal J}_k) := {\bigcap}_{\gamma_{i}\in {\cal J}_k} \Int(\gamma_{i}).
 \end{equation}    

\begin{definition}
  \label{def:realizableSpadjor}
 A \emph{realizable spadjor} ${\cal J}=\cup_k{\cal J}_k$
  is the union of a finite number
  of atom spadjors
  such that ${\cal J}_i$ and ${\cal J}_j$ being distinct implies
  $\rho({\cal J}_i)\cap \rho({\cal J}_j) =\emptyset$.
\end{definition}

Intuitively,
 an atom spadjor represents a connected Yin set
 while a realizable spadjor may represent 
 a Yin set with multiple connected components.
For example,
 the Yin set in Figure \ref{fig:OrientYinSets}(b)
 can be represented 
 % expressed as the union of images
 % (under the boundary-to-interior map $\rho$) of atom spadjors
 by the realizable spadjor %${\cal J}$
 \begin{equation}
   \label{eq:YinSetRep2}
   {\cal J}=\left\{ \gamma^+_1, \gamma_4^+, \gamma_6^-, \gamma_5^-,
     \gamma_7^-, \gamma_2^+, \gamma_3^+ \right\}.
 \end{equation}

Lemma \ref{lem:RS2AS} concerns recovering
 the atom spadjors in a realizable spadjor.

 \begin{lemma}
   \label{lem:RS2AS}
   Any realizable spajor ${\cal J}$ can be \emph{uniquely} expressed as
   \begin{equation}
     \label{eq:realizableSpadjorDecomposition}
     {\cal J} = \cup_{i=1}^n {\cal J}^{+}_i \cup {\cal J}^{-},
   \end{equation}
   where the ${\cal J}^{+}_i$'s and the ${\cal J}^{-}$
    are extracted from ${\cal J}$ as follows.
    \begin{enumerate}[({R2A}-a)]
    \item For each positively oriented Jordan curve $\gamma^+_i\in {\cal J}$,
      form an atom spadjor ${\cal J}^+$ by adding $\gamma_i^+$
      and all negatively oriented Jordan curves
      covered by $\gamma_i^+$.
%      remove these Jordan curves from ${\cal J}$.
    \item If there are negatively oriented Jordan curves left in ${\cal J}$,
      group them into an atom spadjor ${\cal J}^-$.
    \end{enumerate}
 \end{lemma}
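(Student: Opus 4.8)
The plan is to route the whole argument through the connected-component structure of the Yin set that $\mathcal{J}$ represents, which makes the decomposition canonical, and then to check that the covering-based recipe (R2A-a)--(R2A-b) reproduces exactly this canonical object. Let $\mathcal{J}=\bigcup_k\mathcal{J}_k$ be \emph{any} expression of the realizable spadjor as a union of atom spadjors. By Theorem~\ref{thm:uniqueCases} each image $\rho(\mathcal{J}_k)$ is a connected open set, and by Definition~\ref{def:realizableSpadjor} these images are pairwise disjoint; being disjoint connected open sets, they are exactly the connected components of the Yin set $\mathcal{Y}:=\bigcup_k\rho(\mathcal{J}_k)$ that $\mathcal{J}$ represents. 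Since connected components are a topological invariant of $\mathcal{Y}$, the collection $\{\rho(\mathcal{J}_k)\}$ is determined by $\mathcal{J}$ alone, and by the uniqueness clause of Theorem~\ref{thm:uniqueCases} applied to each component, each atom spadjor $\mathcal{J}_k$ is in turn the \emph{unique} oriented-Jordan-curve decomposition of its own image. This already delivers the uniqueness half of the lemma, independently of which expression we started from.

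For existence I would first check that the recipe is well posed. Among the curves of $\mathcal{J}$ lying above a fixed curve under inclusion, any two positively oriented ones have overlapping bounded complements, so almost-disjointness forces them to be nested; hence the positive curves above a given negative curve form a chain, and a negative curve is covered, in the sense of Definition~\ref{def:CoveringJordanCurves}, by at most one positive curve, so (R2A-a) is unambiguous. The negatives gathered under a common $\gamma_i^+$ are pairwise incomparable, since $\gamma^-_a<\gamma^-_b<\gamma_i^+$ would contradict $\gamma^-_a\prec\gamma_i^+$; together with the covering relations this makes each $\mathcal{J}_i^+$ an atom spadjor in the sense of Definition~\ref{def:atomSpadjors}. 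For the leftover negatives I would invoke realizability: two comparable leftover negatives would have nested interiors $\Int(\gamma^-)$, namely the two unbounded complements, so the images of the two atom spadjors containing them would both contain a neighborhood of infinity and therefore intersect, violating Definition~\ref{def:realizableSpadjor}; the same neighborhood-of-infinity observation shows that at most one atom spadjor lacks a positive curve, which is precisely why all leftover negatives are collected into a single $\mathcal{J}^-$.

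The step I expect to be the main obstacle is reconciling the covering relation computed in the full poset $\mathcal{J}$ with the component-wise boundary structure: I must show that the negatives covered by $\gamma_i^+$ in $\mathcal{J}$ are exactly the inner boundary curves of the component whose outer boundary is $\gamma_i^+$. That no extraneous negative is captured follows from disjointness of component images, since a negative $\gamma^-\prec\gamma_i^+$ belonging to a different component would force two components to overlap. The delicate direction is preservation of covering: an intervening curve $\gamma\in\mathcal{J}$ with $\gamma^-_\ell<\gamma<\gamma_i^+$ taken from another component would have to thread the annular region immediately outside the hole bounded by $\gamma^-_\ell$, a region that belongs to the component of $\gamma_i^+$; establishing that such a $\gamma$ produces a point shared by two distinct components, and hence contradicts realizability, is where the local-topology analysis of Lemma~\ref{lem:localTopologyOfBdPtOfYinSet} together with the Jordan Curve Theorem (Theorem~\ref{thm:jordan}) must be brought to bear. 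Once this preservation is secured, (R2A-a) reconstructs every type-$\mathcal{J}^+$ atom spadjor exactly and (R2A-b) the unique type-$\mathcal{J}^-$ one; since each curve of $\mathcal{J}$ is either positive, a covered negative, or a leftover negative, the extracted pieces union to all of $\mathcal{J}$ and coincide with the canonical decomposition of the first paragraph, completing existence.
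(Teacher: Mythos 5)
Your uniqueness argument hinges on the opening claim that ``by Theorem~\ref{thm:uniqueCases} each image $\rho(\mathcal{J}_k)$ is a connected open set,'' and this is a genuine gap: Theorem~\ref{thm:uniqueCases} runs in the opposite direction. It starts from a \emph{connected} Yin set and produces its oriented boundary decomposition; it nowhere asserts that every atom spadjor in the sense of Definition~\ref{def:atomSpadjors} has connected $\rho$-image, and under the literal definitions that converse is false. Take $\gamma^+$ a large circle and two negatively oriented curves $\gamma_1^-,\gamma_2^-$ inside it whose closed bounded complements are two crescent-shaped regions touching tangentially at exactly two points (two improper intersections), so that their union encloses a pocket. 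The three curves are pairwise almost disjoint, $\gamma_1^-$ and $\gamma_2^-$ are incomparable, and each is covered by $\gamma^+$, so every clause of Definition~\ref{def:atomSpadjors} holds; yet $\rho(\{\gamma^+,\gamma_1^-,\gamma_2^-\})$ has two components, the outer region and the pocket. Since a single atom spadjor vacuously satisfies the disjointness condition of Definition~\ref{def:realizableSpadjor}, this pathology can occur inside a realizable spadjor, so atom images need not be the connected components of $\rho(\mathcal{J})$, and your identification of the two --- on which the entire ``canonical decomposition'' uniqueness argument rests --- does not go through without a further restriction or argument. The paper's proof avoids this trap: it never uses connectedness of atom images, arguing instead directly from $\rho(\mathcal{J}_i)\cap\rho(\mathcal{J}_j)=\emptyset$ --- any $\mathcal{J}^-$-type atom has an image containing a neighborhood of infinity, so at most one can occur (your neighborhood-of-infinity observation does match this part), and any negatively oriented curve outside $\mathcal{J}^-$ can be claimed by at most one $\mathcal{J}^+_k$; notably, this combinatorial route still delivers uniqueness even in the pathological example above, where your component-based route breaks down.

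Secondarily, the step you yourself flag as ``the main obstacle'' --- that the covering relation computed in the full poset $\mathcal{J}$ reproduces the per-component inner-boundary structure --- is gestured at rather than proved: you say the needed contradiction ``is where the local-topology analysis \dots must be brought to bear'' and then proceed ``once this preservation is secured.'' A complete argument must actually show that an intervening curve $\gamma$ with $\gamma^-_\ell<\gamma<\gamma^+_i$ from another atom forces a point of $\gamma$ into $\rho(\mathcal{J}_k)$ or otherwise makes two atom images meet, and this requires ruling out $\gamma$ threading through the closures of other holes via improper intersections --- exactly the configuration that produces the counterexample above. In fairness, the paper's own proof of Lemma~\ref{lem:RS2AS} is also terse here, so your existence half is roughly at the paper's level of rigor; the decisive defect is that your uniqueness half substitutes an unsupported (and, as stated, false) converse of Theorem~\ref{thm:uniqueCases} for the paper's direct argument from the disjoint-images condition.
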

 \begin{proof}
  The existence of the expression in
   (\ref{eq:realizableSpadjorDecomposition})
   follows from Definition \ref{def:realizableSpadjor}
   and Theorem \ref{thm:uniqueCases}.
  As for the uniqueness,
   we first note that at most one atom spadjor in the form of ${\cal J}^-$
   can be extracted from any realizable spadjor;
   otherwise it would contradict the condition
   $\rho({\cal J}_i)\cap \rho({\cal J}_j) =\emptyset$
   in Definition \ref{def:realizableSpadjor}.
  Second,
   ${\cal J}^-\ne \emptyset$ and $n>0$ imply
   that the positively oriented Jordan curve
   $\gamma^+_k$ of any ${\cal J}^+_k\subset {\cal J}$
   is covered by a negatively oriented element $\gamma_i^-\in {\cal J}^-$.
  Third,
   any negatively oriented Jordan curve
   not belonging to ${\cal J}^-$
   can be \emph{covered} by at most one ${\cal J}^+_k$.
 %   The proof is completed by the following statements,
 %   which are easy deductions from Theorem \ref{thm:uniqueCases} and
 %   Definitions \ref{def:atomSpadjors}, \ref{def:realizableSpadjor},
 %   and \ref{def:interior}.
 % \begin{itemize}
 % \item For each
 %   $(\gamma_i,\gamma_j)\in {\cal J}_k \times{\cal J}_{\ell}$,
 %   the Jordan curves $\gamma_i$ and $\gamma_j$ are almost disjoint.
 % \item Suppose ${\cal J}_k$ and ${\cal J}_{\ell}$ are both of the form ${\cal J}^+$.
   % Then their positively oriented elements
   % are incomparable with respect to inclusion,
   % or, the positively oriented element of one
   % is covered by a negatively oriented element of the other.
 % \end{itemize}
 \end{proof}

The partition of a realizable spadjor ${\cal J}$ into atom spadjors
 is best illustrated by
 the Hasse diagram of the poset ${\cal J}$ with respect to inclusion,
 c.f. Figure \ref{fig:HassDiagram}.
% for an example. % of (\ref{eq:YinSetRep2}).

 \begin{figure}
   \centering
   \includegraphics[width=0.45\linewidth]{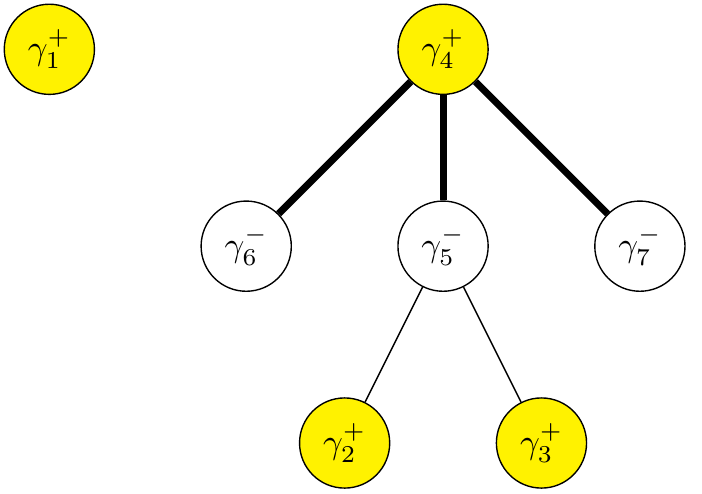}
   \caption{The Hasse diagram
     for the realizable spadjor ${\cal J}$ in (\ref{eq:YinSetRep2})
     that represents the Yin set in Figure \ref{fig:OrientYinSets}(b).
     The partial order is the  ``inclusion'' relation
      as in Definition \ref{def:inclusionRelation}.
     A shaded circle represents a positively oriented Jordan curve
      while an unshaded circle a negatively oriented Jordan curve.
     The partition of ${\cal J}$ into atom spadjors
      consists of two steps:
      (a) form an atom spadjor from each shaded circle
      and its immediate children (if there is any)
      and (b) if any unshaded circles remain,
      group them into ${\cal J}^-$.
     These two steps correspond to (R2A-a,b) in Lemma \ref{lem:RS2AS}.
     % For a circle $\gamma_i$ at a higher elevation
     %  and a circle $\gamma_j$ at a lower elevation,
     %  a thin edge indicates that $\gamma_i$ covers $\gamma_j$
     %  (as in Definition \ref{def:CoveringJordanCurves})
     %  and a thick edge indicates that not only $\gamma_i$ covers $\gamma_j$,
     %  but also $\gamma_i$ and $\gamma_j$
     %  belong to the same connected component
     %  of the Yin set. % represented by the Hasse diagram.
   }
 \label{fig:HassDiagram}
\end{figure}

 \begin{definition}%[The Jordan space $\mathbb{J}$]
   \label{def:JordanSpace}
   The \emph{Jordan space} is the set
   \begin{equation}
     \label{eq:JordanSpace}
     \mathbb{J}:=\{\Zero,\One\} \cup \{\cal J\}
   \end{equation}
    where $\{\cal J\}$ denotes
    the set of all realizable spadjors
    and $\Zero$, $\One$ are two symbols satisfying
    \begin{equation}
      \label{eq:ZeroAndOne}
      \rho(\Zero) := \emptyset,\qquad
      \rho(\One) := \mathbb{R}^2.
    \end{equation}
 \end{definition}

One can interpret $\Zero$ and $\One$
 as atom spadjors consisting of a single oriented Jordan curve
 with infinitesimal diameter such that
 its interior goes to $\emptyset$ and $\mathbb{R}^2$,
 respectively.
 
It now makes sense to extend the definition
 of the boundary-to-interior map in (\ref{eq:boundaryToInteriorMapSpadjor})
 and (\ref{eq:ZeroAndOne})
 to the Jordan space.

 \begin{definition}%[Boundary-to-interior map on $\mathbb{J}$]
   \label{def:boundaryToInteriorMap}
   The \emph{boundary-to-interior map}
    $\rho: \mathbb{J}\rightarrow \mathbb{Y}$
    associates a Yin set 
    with each element in the Jordan space as
    \begin{equation}
      \label{eq:boundaryToInteriorMap}
      \rho({\cal J}) :=\left\{
        % {\bigcup}_{{\cal J}_j\in {\cal F}}^{\perp\perp}\rho({\cal J}_j)
        \begin{array}{ll}
          \emptyset & \textrm{if } {\cal J}=\Zero;
          \\
          \mathbb{R}^2 & \textrm{if } {\cal J}=\One;
          \\
         {\bigcup}_{{\cal J}_j\subset {\cal J}}^{\perp\perp}
          {\bigcap}_{\gamma_{i}\in {\cal J}_j} \Int(\gamma_{i})
          & \textrm{otherwise},
                               
        \end{array}
        \right.
    \end{equation}
    where the ${\cal J}_i$'s are atom spadjors
    extracted from ${\cal J}$ as in Lemma \ref{lem:RS2AS}.
 \end{definition}

 \begin{theorem}
   \label{thm:b2iMapIsBijective}
   The boundary-to-interior map in Definition \ref{def:boundaryToInteriorMap}
%   $\rho: \mathbb{J}\rightarrow \mathbb{Y}$
    is bijective.
 \end{theorem}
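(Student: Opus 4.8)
The plan is to show that the boundary-to-interior map $\rho:\mathbb{J}\rightarrow\mathbb{Y}$ is both injective and surjective, treating the two symbols $\Zero,\One$ separately from the generic realizable spadjors. First I would dispose of the easy cases: $\rho(\Zero)=\emptyset$ and $\rho(\One)=\mathbb{R}^2$ by definition, and these are the only two elements of $\mathbb{J}$ mapping to the Yin sets $\emptyset$ and $\mathbb{R}^2$, since every genuine realizable spadjor consists of actual Jordan curves with bounded interiors and therefore maps to a Yin set different from $\emptyset$ and $\mathbb{R}^2$. With these endpoints handled, the substance of the argument concerns the restriction of $\rho$ to the realizable spadjors and the Yin sets in $\mathbb{Y}\setminus\{\emptyset,\mathbb{R}^2\}$.

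For \emph{surjectivity}, I would invoke the global-topology results already established. Given any Yin set ${\cal Y}\ne\emptyset,\mathbb{R}^2$, Corollary~\ref{coro:JordanCurveRepGeneral} produces oriented, pairwise almost disjoint Jordan curves realizing the expression~(\ref{eq:JordanCurveRepGeneral}); grouping these by connected component via Theorem~\ref{thm:uniqueCases} yields a finite collection of atom spadjors whose interiors are pairwise disjoint, i.e.\ exactly a realizable spadjor ${\cal J}$ in the sense of Definition~\ref{def:realizableSpadjor}. By construction $\rho({\cal J})={\cal Y}$, so $\rho$ hits every Yin set. For \emph{injectivity}, I would suppose $\rho({\cal J})=\rho({\cal J}')={\cal Y}$ for two realizable spadjors and argue that ${\cal J}={\cal J}'$ as sets of oriented Jordan curves. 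The key is that the oriented boundary $\partial{\cal Y}$, together with the orientation convention ``${\cal Y}$ lies to the left,'' is \emph{intrinsic} to ${\cal Y}$: by Theorem~\ref{thm:uniqueRep} its decomposition into pairwise almost disjoint Jordan curves is unique, and by Theorem~\ref{thm:uniqueCases} the induced orientation is unique as well. Hence both ${\cal J}$ and ${\cal J}'$ must coincide with this canonical set of oriented boundary curves of each component of ${\cal Y}$, forcing ${\cal J}={\cal J}'$.

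The main obstacle, and the step deserving the most care, is injectivity—specifically, confirming that \emph{no information is lost} in passing from a realizable spadjor to its Yin set, so that the curves can be read back off unambiguously. Two potential gaps must be closed. First, I must verify that distinct atom spadjors inside a single realizable spadjor cannot ``fuse'' under $\rho$ into fewer or differently-shaped components; this is exactly guaranteed by the disjointness clause $\rho({\cal J}_i)\cap\rho({\cal J}_j)=\emptyset$ in Definition~\ref{def:realizableSpadjor} together with the fact that each $\rho({\cal J}_k)$ is connected (Theorem~\ref{thm:uniqueCases}), so the connected components of ${\cal Y}$ are in bijection with the atom spadjors. Second, within one component I must ensure the recovery of the individual oriented curves from $\partial{\cal Y}$ is unique, which is precisely the content of Theorems~\ref{thm:uniqueRep} and~\ref{thm:uniqueCases}, and the regularity (openness) of ${\cal Y}$ is what rules out the spurious decompositions illustrated in Figure~\ref{fig:boundaryDecompositions}. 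Once the canonical decomposition of $\partial{\cal Y}$ is identified with the unique realizable spadjor via Lemma~\ref{lem:RS2AS}, injectivity follows and the proof is complete.
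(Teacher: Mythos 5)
Your proposal is correct and takes essentially the same route as the paper's proof: the endpoints $\Zero$, $\One$ are handled via (\ref{eq:ZeroAndOne}), surjectivity comes from decomposing a Yin set into connected components and applying Theorem \ref{thm:uniqueCases} (equivalently Corollary \ref{coro:JordanCurveRepGeneral}), and injectivity comes from uniqueness of the spadjor representation. The only difference is one of explicitness: where the paper compresses injectivity into a single appeal to the uniqueness in Lemma \ref{lem:RS2AS}, you unpack the same argument into the uniqueness of the unoriented boundary decomposition (Theorem \ref{thm:uniqueRep}), of its orientation (Theorem \ref{thm:uniqueCases}), and of the correspondence between atom spadjors and connected components forced by Definition \ref{def:realizableSpadjor}.
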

 \begin{proof}
   For the Yin sets $\emptyset$ and $\mathbb{R}^2$,
    (\ref{eq:ZeroAndOne}) states that $\Zero$ and $\One$
    are their preimages.
   For any other Yin set ${\cal Y}\ne\emptyset,\mathbb{R}^2$,
    we can uniquely decompose it as 
    % \begin{displaymath}
    %   \label{eq:splitYinSetsIntoComponents}
${\cal Y} = {\bigcup}_{{\cal Y}_i\subseteq {\cal Y}}^{\perp\perp} {\cal Y}_i$,
    % \end{displaymath}
    where the connected components ${\cal Y}_i$'s are pairwise disjoint.
%   For each ${\cal Y}_i$,
    By Theorem \ref{thm:uniqueCases},
     each ${\cal Y}_i$ is uniquely expressed as
     the intersection of interiors of a number of oriented Jordan curves.
    % \begin{displaymath}
    %   {\cal Y}_i=\bigcap_{\gamma_{j}\in {\cal J}_{\partial {\cal Y}_i}} \Int(\gamma_{j}).
    % \end{displaymath}
    Hence $\rho$ is surjective.
    Also, $\rho$ is injective because of the uniqueness
    in Lemma \ref{lem:RS2AS}.
 \end{proof}

 \begin{corollary}
   \label{coro:uniqueRepOfYinSets}
   A Yin set is uniquely represented
   by a realizable spadjor.
 \end{corollary}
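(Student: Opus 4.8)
The plan is to read the statement off directly from the bijectivity of the boundary-to-interior map $\rho:\mathbb{J}\rightarrow\mathbb{Y}$ established in Theorem \ref{thm:b2iMapIsBijective}. Since $\rho$ is a bijection, it admits a well-defined inverse $\rho^{-1}:\mathbb{Y}\rightarrow\mathbb{J}$, and for any Yin set $\mathcal{Y}$ the element $\rho^{-1}(\mathcal{Y})\in\mathbb{J}$ is the one and only preimage of $\mathcal{Y}$ under $\rho$. That the preimage both exists and is unique is exactly the assertion that $\mathcal{Y}$ is \emph{uniquely} represented, so the corollary is a reformulation of bijectivity in the language of representation.

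First I would invoke the surjectivity half of Theorem \ref{thm:b2iMapIsBijective} to guarantee that every Yin set $\mathcal{Y}$ arises as $\rho(\mathcal{J})$ for some $\mathcal{J}\in\mathbb{J}$, so that a representation always exists. Then I would invoke the injectivity half to conclude that this $\mathcal{J}$ is unique: if $\rho(\mathcal{J}_1)=\rho(\mathcal{J}_2)=\mathcal{Y}$, then $\mathcal{J}_1=\mathcal{J}_2$. Taken together, these say precisely that each $\mathcal{Y}$ determines, and is determined by, a single element of $\mathbb{J}$.

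The only point needing care is the bookkeeping of the two distinguished elements $\emptyset$ and $\mathbb{R}^2$. By (\ref{eq:ZeroAndOne}) these correspond to the symbols $\Zero$ and $\One$ rather than to honest spadjors built from Jordan curves. I would therefore state the conclusion for Yin sets $\mathcal{Y}\neq\emptyset,\mathbb{R}^2$, whose unique preimage $\rho^{-1}(\mathcal{Y})$ is a genuine realizable spadjor, and dispose of the two trivial cases by appealing to the convention that $\Zero$ and $\One$ may be viewed as degenerate atom spadjors of infinitesimal diameter.

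I do not expect any real obstacle here: the substantive work has already been assembled into the bijectivity statement, namely the unique cycle decomposition of $\partial\mathcal{Y}$ (Theorem \ref{thm:uniqueRep}), the unique orientation together with the intersection-of-interiors formula (\ref{eq:intersectInteriors}) (Theorem \ref{thm:uniqueCases}), and the unique recovery of atom spadjors (Lemma \ref{lem:RS2AS}). Consequently the proof is essentially a one-line appeal to Theorem \ref{thm:b2iMapIsBijective}.
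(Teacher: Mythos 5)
Your proposal is correct and follows essentially the same route as the paper, whose entire proof is an appeal to Theorem \ref{thm:b2iMapIsBijective} together with Corollary \ref{coro:JordanCurveRepGeneral}; unpacking bijectivity into surjectivity (existence of a representation) and injectivity (uniqueness) is exactly the intended reading. Your handling of the trivial cases $\emptyset$ and $\mathbb{R}^2$ via the symbols $\Zero$ and $\One$ matches the paper's convention in Definition \ref{def:JordanSpace}, so nothing is missing.
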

 \begin{proof}
   This follows from Theorem \ref{thm:b2iMapIsBijective}
    and Corollary \ref{coro:JordanCurveRepGeneral}.
 \end{proof}

 \begin{figure}
   \centering
   \subfigure[a panda modeled as a Yin set ${\cal P}$]{
     \includegraphics[width=0.46\linewidth]{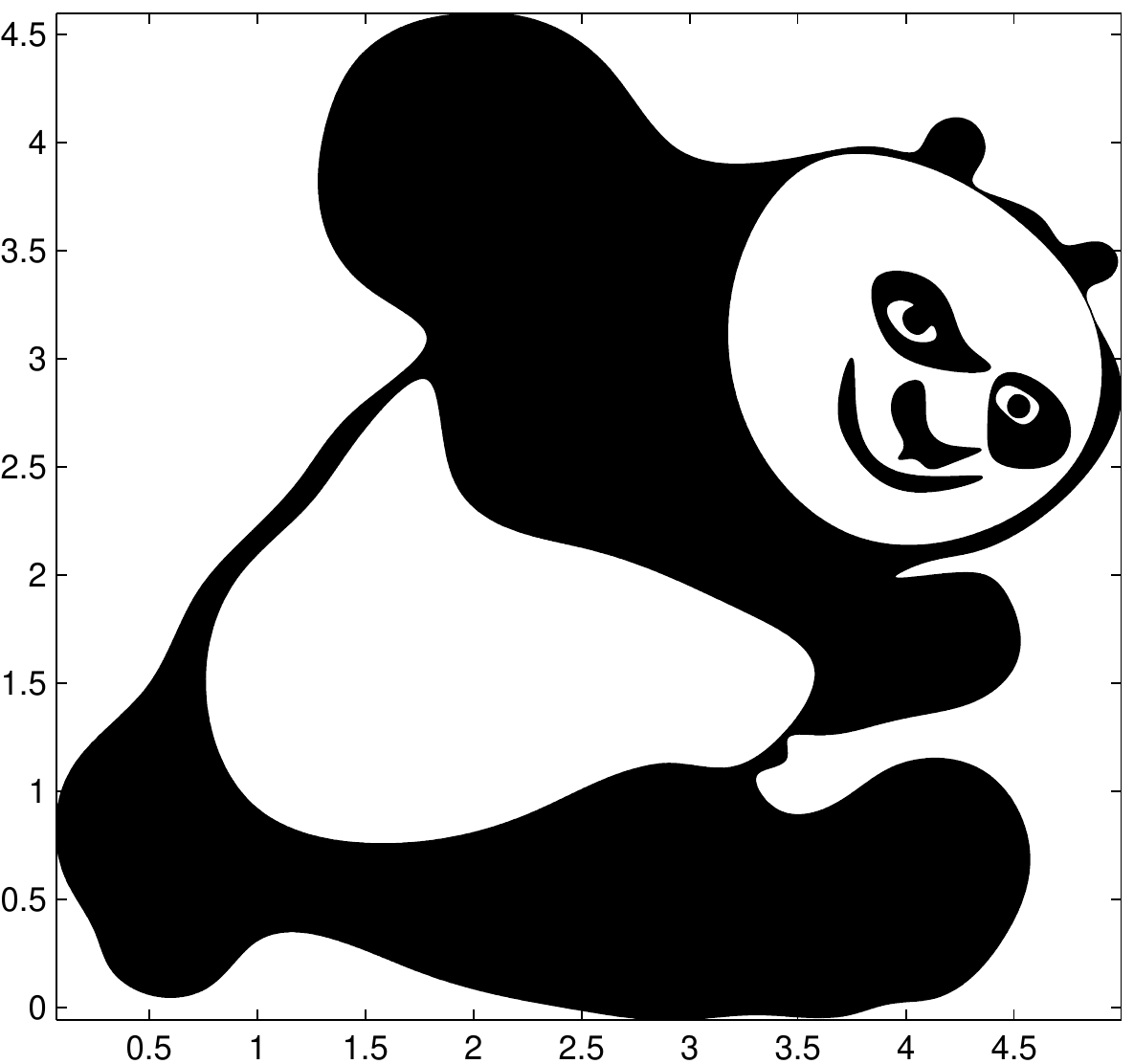}
   }
   \hfill
   \subfigure[the unique representation of ${\cal P}$ as
   a realizable spadjor ${\cal J}$]{
     \includegraphics[width=0.47\linewidth]{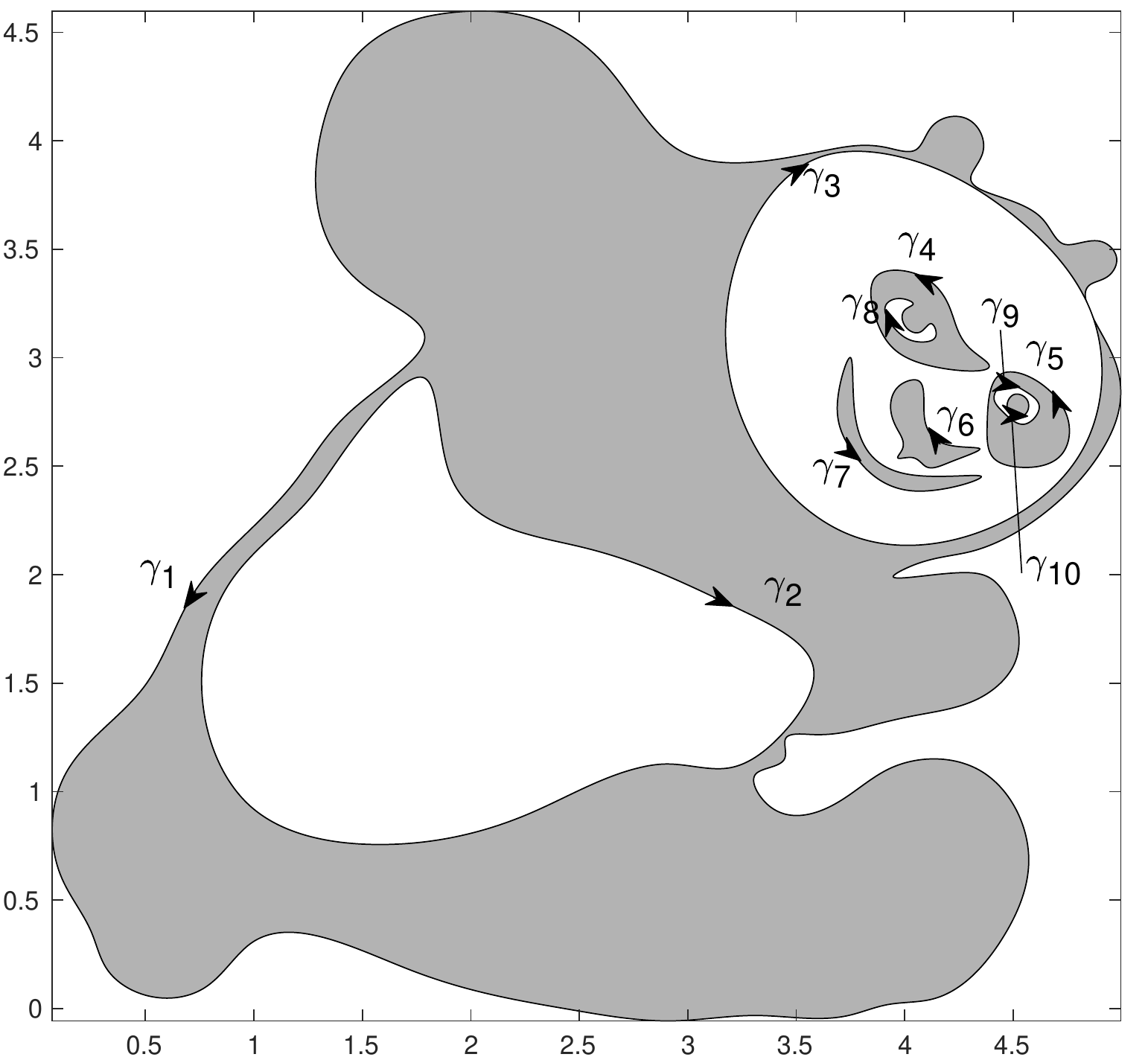}
   }
   \caption{A Yin set with complex topology and geometry.
     In subplot (b), the Jordan curves
      $\gamma_1$, $\gamma_4$, $\gamma_5$,
      $\gamma_6$, $\gamma_7$, and $\gamma_{10}$
      are positively oriented
      while the others are negatively oriented.
     The realizable spadjor is
      ${\cal J}=\cup_{k=1}^6{\cal J}_k$, 
      with the atom spadjors as 
      ${\cal J}_1 = \{\gamma_1, \gamma_2, \gamma_3\}$,
      ${\cal J}_2 = \{\gamma_4, \gamma_8\}$,
      ${\cal J}_3 = \{\gamma_5, \gamma_9\}$,
      ${\cal J}_4 = \{\gamma_6\}$,
      ${\cal J}_5 = \{\gamma_7\}$,
      and ${\cal J}_6 = \{\gamma_{10}\}$;
      all atom spadjors are of the type ${\cal J}^+$
      in (\ref{eq:decomTypes}). 
     The panda is uniquely expressed as
      ${\cal P}=\cup_{k=1}^6{\cal P}_k=\rho({\cal J})$
      with each connected component
      as ${\cal P}_k=\rho({\cal J}_k)$.
     The picture in subplot (a) is a raster image
      while the curves in subplot (b)
      are cubic splines fit through a total of 120 points.
     This small amount of points demonstrates
      the efficiency of realizable spadjor
      in representing complex topology and geometry.
%      for $i=1,2,\ldots, 6$.
     % ${\cal J}_2 = \cap_{i=\{4,8\}} \mathrm{int}(\gamma_i)$,
     % ${\cal J}_3 = \cap_{i=\{5,9\}} \mathrm{int}(\gamma_i)$,
     % ${\cal J}_4 = \cap_{i=\{4,8\}} \mathrm{int}(\gamma_i)$,
     % ${\cal J}_5 = \cap_{i=\{4,8\}} \mathrm{int}(\gamma_i)$,
   }
 \label{fig:panda}
\end{figure}

We sum up this section by Figure \ref{fig:panda},
 where a physically meaningful region with complex topology
 is modeled by a fun Yin set,
 which is further uniquely represented by a realizable spadjor.

%%% Local Variables: 
%%% mode: latex
%%% TeX-master: "../YinSets2D"
%%% End: 

%  LocalWords:  Jonpi bijection Jonpis spadjor Spadjor spadjors homomorphism
%  LocalWords:  multigraph injective disjointness semianalytic poset Hasse
%  LocalWords:  isomorphism piecewise complementation surjective

\section{The Boolean algebra on Yin sets}
\label{sec:boolean-algebra-yin}

After introducing the pasting map
 in Section \ref{sec:pastingMap},
 we define 
 in Sections \ref{sec:compl-op} and \ref{sec:meet-operation}
 the complementation and the meet operations on realizable spadjors
 to equip the Jordan space $\mathbb{J}$
 as a bounded distributive lattice.
Along the way,
 we show that these operations
 are counterparts to Boolean operations on the Yin space.
Our theory culminates
in Section \ref{sec:isomorphism}.
In Section \ref{sec:algor-impl}, 
 we discuss implementation issues
 and present a fun example of our Boolean algorithms on Yin sets.

\subsection{The pasting map of realizable spadjors}
\label{sec:pastingMap}

The following cutting map is trivial,
 but its inverse in Lemma \ref{lem:segmentationMapInverse} is not.
They are crucial for the complementation and the meet operations
 in Sections \ref{sec:compl-op} and \ref{sec:meet-operation}.

 \begin{definition}
   \label{def:segmentationMap}
   For a realizable spadjor ${\cal J}$,
    let $V$ denote a finite point set that
    contains all intersections of the Jordan curves
    in ${\cal J}$.
  The associated \emph{cutting map} or \emph{segmentation map} $S_V$ 
    assigns to ${\cal J}$ a set $E$ of 
    oriented paths obtained by 
    dividing the oriented Jordan curve in ${\cal J}$ at points in $V$.
  The set of (oriented) paths $E=S_V({\cal J})$
    is called a \emph{segmented realizable spadjor}.
 \end{definition}

 \begin{lemma}
   \label{lem:segmentationMapInverse}
   The realizable spadjor ${\cal J}=S_V^{-1}(E)$
   corresponding to a segmented realizable spadjor $E$
   can be uniquely constructed as follows.
   \begin{enumerate}[({S2R}-a)]
   \item Remove all self-loops in $E$ and insert them into ${\cal J}$.
   \item Start with a path $\beta_{\textrm{in}}\in E$
     and denote by $v\in V$ its ending point.
     If there exists only one path
     whose starting point is $v$, 
     call it $\beta_{\textrm{out}}$  and append it to $\beta_{\textrm{in}}$.
     Otherwise,
     set $\beta_{\textrm{out}}$ to be the edge
     that starts at $v$
     and of which the positive counter-clockwise
     angle $\angle \beta_{\textrm{out}} v \beta_{\textrm{in}}$
     is the smallest.
     Repeat the above conditional to grow the path until
     it becomes a loop $\gamma_1$,
     and remove from $E$ all paths that constitute $\gamma_1$.
   \item If $\gamma_1$ is a Jordan curve,
     add it into ${\cal J}$;
     otherwise divide $\gamma_1$ into Jordan curves and/or self-loops 
     and add them into ${\cal J}$.
   \item Repeat (S2R-b,c) to add other Jordan curves
     into ${\cal J}$ until $E$ becomes empty.
   \end{enumerate}
 \end{lemma}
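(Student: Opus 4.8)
The plan is to recast the reconstruction combinatorially and then to show that the angle rule in (S2R-b) is exactly the local disentangling already used to prove Theorem \ref{thm:uniqueCases}. First I would view $E=S_V({\cal J})$ as a directed multigraph $G_E$ on the vertex set $V$: by Definition \ref{def:segmentationMap}, each oriented Jordan curve of ${\cal J}$ is cut at the points of $V$ into a cyclic sequence of oriented paths, so $G_E$ is a disjoint union of directed closed walks and, in particular, the indegree and outdegree of every vertex coincide. This balance is precisely the hypothesis of Theorem \ref{thm:VeblenMultigraphDirected}, which already guarantees that $E$ admits \emph{some} partition into directed cycles; the content of the lemma is that the specific angle rule singles out the \emph{unique} such partition whose cycles are the original curves.

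Second, I would establish that the algorithm is well defined and terminates. The angle rule assigns to each incoming path $\beta_{\textrm{in}}$ ending at $v$ a unique outgoing path $\beta_{\textrm{out}}$, namely the one minimizing the positive counter-clockwise angle $\angle \beta_{\textrm{out}} v \beta_{\textrm{in}}$ (when only one path leaves $v$, this reduces to the appending case). The key claim is that at each $v$ this assignment is a \emph{bijection} from the incoming paths at $v$ onto the outgoing paths at $v$. I would deduce this from Lemma \ref{lem:localTopologyOfBdPtOfYinSet}: a small punctured neighborhood of $v$ splits into an even number of wedges alternating between ${\cal Y}$ and ${\cal Y}^{\perp}$, and under the convention that ${\cal Y}$ lies to the left of every oriented arc, each exterior wedge is bounded by exactly one incoming and one outgoing arc; the angle rule pairs precisely these two. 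Consequently the successor map $\beta_{\textrm{in}}\mapsto\beta_{\textrm{out}}$ is a permutation of the oriented paths, its orbits partition $E$ into directed closed walks, and (S2R-b) merely traces one orbit. Finiteness of $E$ then yields termination of (S2R-b,c,d).

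Third comes the correctness, which is the crux. I would observe that the exterior-wedge pairing realized by the angle rule is \emph{identical} to the disentangling rule of Figure \ref{fig:edgePairing} that defines the unique boundary decomposition in Theorem \ref{thm:uniqueCases}. Since the members of ${\cal J}$ are themselves pairwise almost disjoint oriented Jordan curves sharing this same union and this same local pairing at every vertex of $V$, the uniqueness established in the proof of Theorem \ref{thm:uniqueCases} (the case analysis behind Figure \ref{fig:edgePairingProof}, which is purely local and hence applies verbatim here) forces the reglued closed walks to coincide with the original $\gamma_{j,i}$. In particular, each traced walk $\gamma_1$ is a single original Jordan curve and is therefore already simple, so the splitting in (S2R-c) acts trivially and only serves as a safeguard against the non-simple walks that the angle rule provably never produces; the self-loops removed in (S2R-a) are the curves incident to no other arc and are returned unchanged.

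Finally, I would argue uniqueness of the output. Because the pairing rule at every vertex is deterministic and depends only on the local rotation system at that vertex, the induced permutation of the oriented paths, and hence its orbit partition, is independent of the starting path in (S2R-b) and of the order in which orbits are extracted in (S2R-d). Thus ${\cal J}=S_V^{-1}(E)$ is uniquely determined and equals the realizable spadjor from which $E$ was cut, which is the assertion. I expect the main obstacle to be the second and third steps taken together: rigorously verifying that the ``smallest positive counter-clockwise angle'' prescription coincides with the exterior-wedge pairing requires carefully tracking the orientation convention against the angular order of the reversed incoming arc and the candidate outgoing arcs at an improper intersection, i.e. re-deriving Figure \ref{fig:edgePairing} at the level of individual directed paths.
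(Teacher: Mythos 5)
There is a genuine gap, and it sits exactly where you predicted the main obstacle would be: your third step claims that the smallest-positive-counter-clockwise-angle rule in (S2R-b) is \emph{identical} to the exterior-wedge pairing of Figure \ref{fig:edgePairing}, and hence that every traced loop $\gamma_1$ is a single original Jordan curve, making (S2R-c) a vacuous safeguard. Both claims are false, and the paper says so explicitly: the caption of Figure \ref{fig:pastingMap} states that the condition in (S2R-b) ``is different from that in Figure \ref{fig:edgePairing},'' and Figure \ref{fig:pastingMap}(c) exhibits a segmented realizable spadjor (one component, two improper intersections) for which the loop produced by (S2R-b) consumes \emph{all} paths, passes through a vertex twice, and must genuinely be divided into two Jordan curves in (S2R-c). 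So (S2R-c) does real work, and any proof that concludes it acts trivially cannot be a proof of this lemma.

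The deeper reason your identification fails is the one the paper's proof opens with: Theorem \ref{thm:uniqueRep} (and with it the disentangling rule of Figure \ref{fig:edgePairing}) applies only to a \emph{connected} Yin set, whereas here $\rho({\cal J})$ may be disconnected. For two externally tangent disks, both boundary circles positively oriented, the exterior near the tangency point $v$ consists of two wedges each bounded by one arc of \emph{each} circle; your exterior-wedge pairing would therefore splice the two circles into a single figure-eight walk, while the correct realizable spadjor keeps them as two separate Jordan curves --- which is what the angle rule in fact does in this case (cf. Figure \ref{fig:pastingMap}(a)). Conversely, in the configuration of Figure \ref{fig:pastingMap}(c) the angle rule produces a non-simple loop that the wedge pairing would not. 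So the two local rules disagree in both directions, your bijection-of-wedges argument in step two (though correct as a statement about in/out arcs bounding each ${\cal Y}^{\perp}$-wedge, by Lemma \ref{lem:localTopologyOfBdPtOfYinSet}(c)) does not transfer to the angle rule, and your uniqueness argument in step four inherits the flaw because it presumes the orbits are the original curves. The paper instead argues that (S2R-a,b) preserve the invariant that $\rho({\cal J})$ lies to the left of every path and avoid proper intersections (Figure \ref{fig:edgePairingProof}(a)), accepts that $\gamma_1$ may fail to be simple, repairs this in (S2R-c), and obtains uniqueness of the output not from any local pairing but from the uniqueness of the spadjor representation in Corollary \ref{coro:uniqueRepOfYinSets}. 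Your combinatorial framing (directed multigraph, balanced degrees via Theorem \ref{thm:VeblenMultigraphDirected}, termination) is sound and compatible with that argument, but the correctness core must be replaced along the paper's lines.
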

 \begin{proof}
   First we note that Theorem \ref{thm:uniqueRep}
    is not applicable here
    because the Yin set $\rho({\cal J})$ may be disconnected.
   By Definition \ref{def:interior}, % and \ref{def:boundaryToInteriorMap},
    the Yin set $\rho({\cal J})$ always lies at the left
    of any oriented path $\beta\in E$,
    the steps (S2R-a,b) are sufficient to fulfill this invariant.
   The choice for growing the path in (S2R-b) also avoids
    potential proper intersections of Jordan curves in ${\cal J}$; 
    see Figure \ref{fig:edgePairingProof}(a). 
   However, as suggested by Figure \ref{fig:edgePairingProof}(b)
    and the proof of Theorem \ref{thm:uniqueRep},
    the loop $\gamma_1$ might not be a single Jordan curve,
%    but multiple Jordan curves with improper intersections,
    hence we need to divide it into Jordan curves and/or self-loops
     in \mbox{(S2R-c)};
     see Figure \ref{fig:pastingMap}.
    The uniqueness of the constructed realizable spadjor
     follows from the uniqueness
     in Corollary \ref{coro:uniqueRepOfYinSets}.
 \end{proof}
    
  \begin{figure}
    \centering
    \subfigure[Two components and one improper intersection]{
    \includegraphics[width=0.25\linewidth]{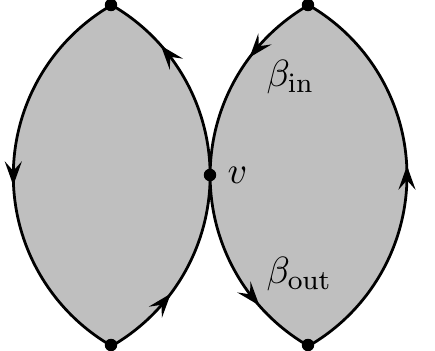}}
    \hfill
    \subfigure[Two components and two improper intersections]{
    \includegraphics[width=0.31\linewidth]{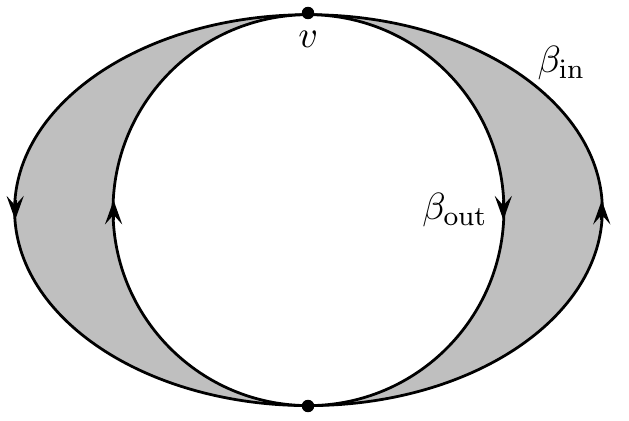}}
    \hfill
    \subfigure[One component and two improper intersections]{
    \includegraphics[width=0.3\linewidth]{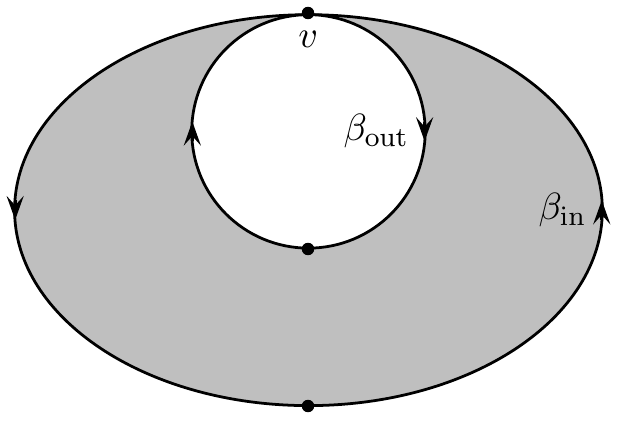}}
    \caption{Illustrating key steps (S2R-b,c) of the pasting map
      in Lemma \ref{lem:segmentationMapInverse}.
      The shaded region represents a Yin set,
       whose boundary consists of two Jordan curves
       with an improper intersection at $v$.
      In each subplot, the solid dots represent points of $V$
       while the directed paths constitute $E$.
      Starting from $\beta_{\textrm{in}}$,
       we pick $\beta_{\textrm{out}}$ to grow the starting path
       because $\angle \beta_{\textrm{out}} v \beta_{\textrm{in}}$
       is the smallest counterclockwise angle
       among those of the two out-edges;
       this condition in (S2R-b)
       is different from that in Figure \ref{fig:edgePairing}.
      In subplot (c), the loop $\gamma_1$ resulting from (S2R-b)
       consumes all paths.
      Hence in (S2R-c) we divide it into two Jordan loops
       to fulfill the representation invariant of realizable spadjors.
    }
    \label{fig:pastingMap}
  \end{figure}

The procedures in Lemma \ref{lem:segmentationMapInverse}
 define the inverse of the cutting map,
 to which we refer as the \emph{pasting map of realizable spadjors}.

\subsection{Complementation: a unitary operation on $\mathbb{J}$}
\label{sec:compl-op}

The following is an easy result on the local topology of regular sets.

 \begin{corollary}
   \label{coro:brdPtOfRegularSets}
  Suppose ${\cal Y}$ is a regular open or regular closed set.
  Then a point $p\in \mathbb{R}^2$
   is a boundary point of ${\cal Y}$
   if and only if, for any sufficiently small $r>0$,
   the open ball ${\cal N}_r(p)$ centered at $p$ with radius $r$
   contains both points in ${\cal Y}$ and ${\cal Y}^{\perp}$.
%   and $\partial {\cal Y}$.
 \end{corollary}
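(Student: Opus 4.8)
The plan is to recast the corollary as the set identity $\partial{\cal Y} = {\cal Y}^- \cap ({\cal Y}^\perp)^-$, since the assertion ``${\cal N}_r(p)$ contains points of ${\cal A}$ for every sufficiently small $r$'' is precisely the statement $p\in {\cal A}^-$. The backbone of the argument is the three-way partition $\mathbb{R}^2 = {\cal Y}^\circ \sqcup \partial{\cal Y}\sqcup {\cal Y}^\perp$, which holds for \emph{any} set and follows at once from the definition of a boundary point in Section \ref{sec:regularSets} (a point lying in neither ${\cal Y}^\circ$ nor ${\cal Y}^\perp$) together with the disjointness ${\cal Y}^\circ\cap{\cal Y}^\perp=\emptyset$.

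First I would dispose of the ``if'' direction, which needs no regularity. Assuming every small ${\cal N}_r(p)$ meets both ${\cal Y}$ and ${\cal Y}^\perp$, I rule out the other two cells of the partition: if $p\in{\cal Y}^\circ$ then some ${\cal N}_r(p)\subseteq {\cal Y}^\circ$ is disjoint from ${\cal Y}^\perp$, and if $p\in{\cal Y}^\perp$ then some ${\cal N}_r(p)\subseteq{\cal Y}^\perp\subseteq{\cal Y}'$ is disjoint from ${\cal Y}$; either possibility contradicts the hypothesis, so $p\in\partial{\cal Y}$.

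For the ``only if'' direction, let $p\in\partial{\cal Y}$. That every small ball meets ${\cal Y}$ is immediate from $\partial{\cal Y}\subseteq{\cal Y}^-$ (in the closed case simply from $p\in{\cal Y}$). The crux, and the only place regularity enters, is showing every small ball meets the exterior, i.e.\ $p\in({\cal Y}^\perp)^-$. I would argue by contradiction: if some ${\cal N}_r(p)\cap{\cal Y}^\perp=\emptyset$, then ${\cal N}_r(p)\subseteq ({\cal Y}^\perp)' = {\cal Y}^-$, where the identity $({\cal Y}^\perp)'={\cal Y}^-$ is obtained by complementing the relation ${\cal Y}^\perp = {\cal Y}^{-\prime}$ recorded in Section \ref{sec:regularSets}. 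Being an open set contained in ${\cal Y}^-$, the ball lies in ${\cal Y}^{-\circ}$. Regularity now closes the gap: if ${\cal Y}$ is regular open then ${\cal Y}^{-\circ}={\cal Y}={\cal Y}^\circ$, whereas if ${\cal Y}$ is regular closed then it is in particular closed, so ${\cal Y}^-={\cal Y}$ forces ${\cal N}_r(p)\subseteq{\cal Y}^\circ$ directly. In either case $p\in{\cal Y}^\circ$, contradicting $p\in\partial{\cal Y}$.

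The main obstacle is exactly this last step. The textbook fact that a boundary point lies in the closure of the \emph{complement} ${\cal Y}'$ is automatic and gives nothing new; the corollary demands the stronger conclusion that it lies in the closure of the \emph{exterior} ${\cal Y}^\perp$. For a non-regular open set, such as a punctured disk, this stronger claim genuinely fails, so the regularity hypothesis is indispensable and must be invoked through the identity ${\cal Y}^{-\circ}={\cal Y}$ (or, dually, ${\cal Y}^{\circ-}={\cal Y}$).
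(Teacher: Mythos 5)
Your proposal is correct, and it takes a genuinely different route from the paper's proof. The paper argues the two directions asymmetrically: necessity (a boundary point forces every small ball to meet both ${\cal Y}$ and ${\cal Y}^{\perp}$) is delegated to Lemma \ref{lem:localTopologyOfBdPtOfYinSet}(c), while sufficiency is proved by enumerating the seven possible combinations of point types ${\cal Y}$, ${\cal Y}^{\perp}$, $\partial{\cal Y}$ inside ${\cal N}_r(p)$ and excluding cases (iii)--(vi) ``because of the regularity.'' You instead recast the statement as the identity $\partial{\cal Y}={\cal Y}^{-}\cap({\cal Y}^{\perp})^{-}$ and work directly with the partition $\mathbb{R}^2={\cal Y}^{\circ}\sqcup\partial{\cal Y}\sqcup{\cal Y}^{\perp}$ and the closure/interior identities of Section \ref{sec:regularSets}. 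This buys several things. First, your argument is self-contained and valid for arbitrary regular open or closed sets, whereas Lemma \ref{lem:localTopologyOfBdPtOfYinSet} is stated and proved (via semianalyticity) only for Yin sets, so the paper's necessity direction formally covers less than the corollary asserts. Second, you localize the role of regularity more sharply: the ``if'' direction holds for \emph{every} subset of $\mathbb{R}^2$, and regularity is indispensable only in forcing a boundary point into the closure of the \emph{exterior} rather than merely of the complement, as your punctured-disk example makes precise; by contrast, the paper's case analysis attributes to regularity even exclusions (such as its case (iv)) that really follow from connectedness of the ball. Third, your contradiction step via ${\cal N}_r(p)\subseteq({\cal Y}^{\perp})'={\cal Y}^{-}$ and then ${\cal N}_r(p)\subseteq{\cal Y}^{-\circ}$ makes explicit what the paper leaves implicit in the assertion that its case (vii) ``must imply a boundary point.'' What the paper's enumeration buys in exchange is consistency with the local-topology picture of Figure \ref{fig:localTopology} that it reuses throughout Section \ref{sec:topologyOfYinSets}, but as a proof of this corollary your version is the more rigorous and more general one.
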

 \begin{proof}
   The necessity follows directly from Lemma
    \ref{lem:localTopologyOfBdPtOfYinSet} (c),
    we only prove the sufficiency.
   If $r$ is sufficiently small,
    there are only seven cases for the type of points
    contained in ${\cal N}_r(p)$:
    (i) ${\cal Y}$, (ii) ${\cal Y}^{\perp}$,
    (iii) $\partial {\cal Y}$,
    (iv) ${\cal Y}$ and ${\cal Y}^{\perp}$,
    (v) ${\cal Y}$ and $\partial {\cal Y}$,
    (vi) ${\cal Y}^{\perp}$ and $\partial {\cal Y}$,
    and (vii) all three sets.
   Because of the regularity,
    cases (iii) to (vi) are impossible.
   By definitions in Section \ref{sec:regularSets},
    (i) implies an interior point and
    (ii) implies an exterior point.
   Hence (vii) must imply a boundary point.
   In other words,
    the presence of ${\cal Y}$ and ${\cal Y}^{\perp}$
    in ${\cal N}_r(p)$
    dictates that of $\partial {\cal Y}$ in ${\cal N}_r(p)$.
 \end{proof}

Corollary \ref{coro:brdPtOfRegularSets}
 and Lemma \ref{lem:segmentationMapInverse} motivate
 our complementation operation on $\mathbb{J}$.

 \begin{definition}%[The ``complementation'' operation]
   \label{def:CJJ}
   The \emph{complementation operation} $':\mathbb{J}\rightarrow\mathbb{J}$
    is defined as
    \begin{equation}
      \label{eq:CJJ}
      {\cal J}' := \left\{
      \begin{array}{ll}
        \One & \textrm{if } {\cal J}=\Zero;
        \\
        \Zero & \textrm{if } {\cal J}=\One;
        \\
        \left( S_V^{-1} \circ R \circ S_V \right) {\cal J}
        & \textrm{otherwise}, 
      \end{array}
      \right.
    \end{equation}
    where $V$ is the set of improper intersections of
     Jordan curves in ${\cal J}$
     and the orientation-reversing map $R$
     reverses the orientation of each path in
     the segmented realizable spadjor $S_V({\cal J})$.
 \end{definition}

 \begin{lemma}
   \label{lem:complementationHomo}
   The complementation operation in Definition \ref{def:CJJ}
   satisfies 
   % is a homomorphism, i.e., 
   \begin{equation}
     \label{eq:complementationHomo}
     \forall {\cal J} \in \mathbb{J},
     \qquad \rho({\cal J}')
     = (\rho({\cal J}))^{\perp}.
   \end{equation}
 \end{lemma}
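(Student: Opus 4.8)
The plan is to dispatch the two trivial cases directly and then reduce the substantive case to uniqueness results already in hand. When ${\cal J}=\Zero$ or ${\cal J}=\One$, Definition \ref{def:CJJ} together with (\ref{eq:ZeroAndOne}) gives $\rho(\Zero')=\rho(\One)=\mathbb{R}^2=\emptyset^{\perp}=(\rho(\Zero))^{\perp}$ and symmetrically $\rho(\One')=\emptyset=(\mathbb{R}^2)^{\perp}=(\rho(\One))^{\perp}$, so (\ref{eq:complementationHomo}) holds. For the remaining case I would set ${\cal Y}:=\rho({\cal J})$. By Theorem \ref{thm:YinSetsFormABooleanAlgebra} the exterior ${\cal Y}^{\perp}$ is again a Yin set, so by Corollary \ref{coro:uniqueRepOfYinSets} it admits a \emph{unique} realizable spadjor ${\cal J}^{*}$ with $\rho({\cal J}^{*})={\cal Y}^{\perp}$. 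The whole task then becomes showing the identity ${\cal J}'={\cal J}^{*}$, from which $\rho({\cal J}')={\cal Y}^{\perp}=(\rho({\cal J}))^{\perp}$ follows at once.

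The engine of the argument is that ${\cal J}$ and ${\cal J}^{*}$ have \emph{the same segmented form up to reversal of orientation}. Since ${\cal Y}^{\perp}=({\cal Y}^{-})'$, the curves underlying ${\cal J}$ and ${\cal J}^{*}$ are one and the same point set $\partial{\cal Y}=\partial{\cal Y}^{\perp}$; moreover, the local description in Lemma \ref{lem:localTopologyOfBdPtOfYinSet} is symmetric in ${\cal Y}$ and ${\cal Y}^{\perp}$, so the non-manifold points (those with $n_c>1$) coincide for the two sets. These are exactly the improper intersections of ${\cal J}$, i.e. the cut set $V$ of Definition \ref{def:CJJ}, and they contain every intersection of ${\cal J}^{*}$ as well. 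Hence cutting either spadjor at $V$ yields the very same collection of simple arcs and self-loops of $\partial{\cal Y}$, the two segmentations differing at most in orientation. As recorded in the proof of Lemma \ref{lem:segmentationMapInverse}, $\rho({\cal J})$ lies to the left of every path of $S_V({\cal J})$; by Lemma \ref{lem:localTopologyOfBdPtOfYinSet}(c) the opposite side of each such arc is locally ${\cal Y}^{\perp}$, so after applying the orientation-reversing map $R$ every path carries ${\cal Y}^{\perp}$ on its left. This is precisely the orientation of the paths of $S_V({\cal J}^{*})$, and therefore $R\bigl(S_V({\cal J})\bigr)=S_V({\cal J}^{*})$ as sets of oriented paths.

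It then remains only to apply $S_V^{-1}$. Because $V$ contains all intersections of ${\cal J}^{*}$, Lemma \ref{lem:segmentationMapInverse} gives $S_V^{-1}\bigl(S_V({\cal J}^{*})\bigr)={\cal J}^{*}$, whence
\[
{\cal J}'=\bigl(S_V^{-1}\circ R\circ S_V\bigr){\cal J}
=S_V^{-1}\bigl(S_V({\cal J}^{*})\bigr)={\cal J}^{*},
\]
and so $\rho({\cal J}')=\rho({\cal J}^{*})={\cal Y}^{\perp}=(\rho({\cal J}))^{\perp}$, as claimed. I expect the main obstacle to sit entirely in the middle paragraph, namely justifying that cutting at $V$ produces literally the same oriented arcs for ${\cal J}^{*}$ as the reversal of those for ${\cal J}$. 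This hinges on the observation that the re-pairing of edges at a non-manifold point is the \emph{dual} of the pairing in Figure \ref{fig:edgePairing}: once orientations are flipped, the smallest-counterclockwise-angle rule of the pasting map groups the incident arcs around the ${\cal Y}^{\perp}$-wedges rather than the ${\cal Y}$-wedges. Making this duality precise is what lets me invoke the uniqueness of the pasting map instead of performing the local casework at each non-manifold point by hand.
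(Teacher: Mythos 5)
Your proposal is correct and follows essentially the same route as the paper's own proof: both rest on the facts that regularity forces $\partial\mathcal{Y}=\partial\mathcal{Y}^{\perp}$ (via Corollary \ref{coro:brdPtOfRegularSets}), that reversing orientations puts $\mathcal{Y}^{\perp}$ on the left of every boundary arc, and that the conjugation by the cutting map lets the pasting map reorganize the reversed paths into the correct Jordan curves at improper intersections. Your version is in fact somewhat tighter than the paper's — explicitly dispatching $\Zero,\One$, factoring through the unique spadjor $\mathcal{J}^{*}$ of Corollary \ref{coro:uniqueRepOfYinSets}, and reducing everything to $R\bigl(S_V(\mathcal{J})\bigr)=S_V(\mathcal{J}^{*})$ together with $S_V^{-1}\circ S_V=\mathrm{id}$ — but the underlying argument is the same.
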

 \begin{proof}
   By definition,
    a regular set ${\cal Y}$ satisfies
    $({\cal Y}^{\perp})^{\perp}={\cal Y}$,
    which, together with Corollary \ref{coro:brdPtOfRegularSets},
    imply that ${\cal Y}$ and ${\cal Y}^{\perp}$
    have exactly the same boundary.
   By Corollary \ref{coro:JordanCurveRepGeneral}
    and Definition \ref{def:interior},
    the realizable spadjors
    representing ${\cal Y}$ and ${\cal Y}^{\perp}$
    are the same set of Jordan curves except that
    each pair of corresponding Jordan curves
    has different orientations,
    which justifies the necessity of the orientation-reversing map $R$.
   More precisely,
    the conjugate of $R$ by the cutting map $S_V$ is needed
    here because multiple improper intersections
    of two Jordan curves 
    may dictate that the paths
    constituting oriented Jordan curves in ${\cal J}$
    be reorganized in order to represent ${\cal Y}^{\perp}$ properly.
   For example,
    the calculation of ${\cal Y}^{\perp}$ for the Yin set ${\cal Y}$
    in Figure \ref{fig:pastingMap} (b)
    involves not only reversing the orientation of each path,
    but also reorganizing these paths into different Jordan curves.
 \end{proof}

\subsection{Meet: a binary operation on $\mathbb{J}$}
\label{sec:meet-operation}

To define the meet of
 two realizable spadjors ${\cal J}$ and ${\cal K}$,
 we cut them by $S_V$,
 select those paths that are on the boundary of the Yin set 
 $\rho({\cal J}) \cap \rho({\cal K})$,
 and paste the set of selected paths by $S_V^{-1}$ to form the result.
Lemma \ref{lem:bryEdgeRequirement}
 and Corollary \ref{coro:bryEdgeRequirementForest}
 tell us which paths we should choose.

 \begin{lemma}
   \label{lem:bryEdgeRequirement}
   Denote 
   ${\cal Y}:=\mathrm{int}(\sigma_1)\cap \mathrm{int}(\sigma_2)$
   where $\sigma_1$ and $\sigma_2$ are two oriented Jordan curves.
   For a curve $\beta$ satisfying 
    $\beta\subseteq \sigma_1$
    and $\beta\subset \mathbb{R}^2\setminus\sigma_2$,
%    and both endpoints $\beta$ belong to $\sigma_1\cap\sigma_2$,
    we have $\beta\subseteq \partial {\cal Y}$ 
    if and only if $\beta\subset \mathrm{int}(\sigma_2)$.
   For a curve $\beta$ satisfying 
     $\beta\subseteq \sigma_1\cap \sigma_2$,
     we have $\beta\subseteq \partial {\cal Y}$ 
     if and only if 
     the direction of $\beta$
     induced from the orientation of $\sigma_1$
     is the same as that
     induced from the orientation of $\sigma_2$.
 \end{lemma}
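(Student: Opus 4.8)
The plan is to reduce both equivalences to the local boundary-point criterion of Corollary \ref{coro:brdPtOfRegularSets}. First I would record that ${\cal Y}=\mathrm{int}(\sigma_1)\cap\mathrm{int}(\sigma_2)$ is a regular open set: each $\mathrm{int}(\sigma_i)$ is regular open (its closure is $\mathrm{int}(\sigma_i)\cup\sigma_i$, whose interior is $\mathrm{int}(\sigma_i)$ again since $\sigma_i$ is nowhere dense), and the intersection of two regular open sets is regular open by Theorem \ref{thm:regularOpenAlgebra}. Hence a point $p$ lies on $\partial {\cal Y}$ if and only if every sufficiently small ball ${\cal N}_r(p)$ meets both ${\cal Y}$ and ${\cal Y}^{\perp}$. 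Since $\partial {\cal Y}$ is closed and the orientation data is constant along the connected arc $\beta$, the inclusion $\beta\subseteq\partial {\cal Y}$ is decided by the status of any single interior point $p\in\beta$, the endpoints being handled by closedness.

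For the first claim I would fix $p\in\beta$ and choose $r$ so small that ${\cal N}_r(p)$ is disjoint from $\sigma_2$ and meets $\sigma_1$ only along the single arc through $p$. As $\beta$ is connected and avoids $\sigma_2$, it lies entirely in one component of the complement of $\sigma_2$, so ${\cal N}_r(p)$ is contained in either $\mathrm{int}(\sigma_2)$ or the exterior of $\sigma_2$. If $p\in\mathrm{int}(\sigma_2)$, then $\sigma_1$ splits ${\cal N}_r(p)$ into a left piece lying in $\mathrm{int}(\sigma_1)\cap\mathrm{int}(\sigma_2)={\cal Y}$ and a right piece lying in the exterior of $\sigma_1$; the latter is open and disjoint from ${\cal Y}$, hence contained in ${\cal Y}^{\perp}$, so Corollary \ref{coro:brdPtOfRegularSets} gives $p\in\partial {\cal Y}$. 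If instead $p$ lies in the exterior of $\sigma_2$, then ${\cal N}_r(p)\cap\mathrm{int}(\sigma_2)=\emptyset$, so ${\cal N}_r(p)$ contains no point of ${\cal Y}$ and $p\notin\partial {\cal Y}$. This establishes $\beta\subseteq\partial {\cal Y}\ \Leftrightarrow\ \beta\subset\mathrm{int}(\sigma_2)$.

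For the second claim, where $\beta\subseteq\sigma_1\cap\sigma_2$, the two curves coincide along $\beta$, so for small $r$ the arc divides ${\cal N}_r(p)$ into exactly two open pieces, a left side $L$ and a right side $R$. By Definition \ref{def:interior}, the side lying to the left of $\sigma_i$ relative to its own orientation is the one contained in $\mathrm{int}(\sigma_i)$. If the two induced directions agree, $\sigma_1$ and $\sigma_2$ place their interiors on the same side of $\beta$: that side lies in $\mathrm{int}(\sigma_1)\cap\mathrm{int}(\sigma_2)={\cal Y}$, the opposite side lies outside both interiors and is therefore contained in ${\cal Y}^{\perp}$, and $p\in\partial {\cal Y}$. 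If the directions disagree, the interiors sit on opposite sides, so each of $L,R$ belongs to exactly one of $\mathrm{int}(\sigma_1),\mathrm{int}(\sigma_2)$ and hence to neither intersection; then ${\cal N}_r(p)$ meets no point of ${\cal Y}$ and $p\notin\partial {\cal Y}$. This is precisely the asserted orientation criterion.

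I expect the main obstacle to be the bookkeeping in the overlap case: one must verify that along $\beta$ the coinciding curves genuinely cut the neighborhood into only two pieces (using that $\sigma_1,\sigma_2$ are simple and almost disjoint, so for small $r$ no further branch of either curve enters ${\cal N}_r(p)$), and that the left/right assignment of Definition \ref{def:interior} is applied consistently to both curves at once. Once the local geometry is pinned down, translating ``same side'' and ``opposite sides'' into membership in ${\cal Y}$ versus ${\cal Y}^{\perp}$ is routine, with the regular open property of ${\cal Y}$ ensuring that the exterior pieces land in ${\cal Y}^{\perp}$ rather than merely in the set-theoretic complement.
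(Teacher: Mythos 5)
Your proof is correct and follows essentially the same route as the paper's: you reduce membership in $\partial{\cal Y}$ to the two-sided ball criterion of Corollary \ref{coro:brdPtOfRegularSets} (after noting ${\cal Y}$ is regular open, as the paper does via Theorem \ref{thm:YinSetsFormABooleanAlgebra}) and analyze a small neighborhood of a point of $\beta$; you even write out the overlap case, which the paper dispatches as ``similar,'' and your direct observation that the piece of ${\cal N}_r(p)$ on the exterior side of $\sigma_1$ is open and disjoint from ${\cal Y}$, hence contained in ${\cal Y}^{\perp}$, cleanly replaces the paper's appeal to DeMorgan's law. One small correction: in the overlap case you cannot invoke almost disjointness of $\sigma_1,\sigma_2$ (a shared arc means infinitely many improper intersections, and the lemma imposes no such hypothesis--- indeed the meet operation applies it to boundary curves from two different spadjors that may overlap along arcs); the local two-piece structure of ${\cal N}_r(p)\setminus\beta$ instead follows from each curve being simple and piecewise analytic, so that for sufficiently small $r$ each $\sigma_i$ meets ${\cal N}_r(p)$ only in the common arc through $p$.
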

 \begin{proof}
   We only prove the first statement since the second one
    can be shown similarly.
   By Definition \ref{def:interior}, 
    $\mathrm{int}(\sigma_1)$
    and $\mathrm{int}(\sigma_2)$ are both Yin sets.
   It follows from Theorem \ref{thm:YinSetsFormABooleanAlgebra} 
    that ${\cal Y}$ is also a Yin set.
   By Corollary \ref{coro:brdPtOfRegularSets},
    it suffices to show that a small open ball ${\cal N}_r(p)$
    centered at $p\in \beta$
    contains both points in ${\cal Y}$ and ${\cal Y}^{\perp}$
    if and only if $\beta\subset \mathrm{int}(\sigma_2)$.
   As shown in Figure \ref{fig:segmentOnBdry}, %for sufficiency,
    $p\in \beta$ and $\beta\subseteq \sigma_1$
    imply that ${\cal N}_r(p)\cap \mathrm{int}(\sigma_1) \ne \emptyset$;
    then the condition $\beta\subset \mathrm{int}(\sigma_2)$
    implies ${\cal N}_r(p)\cap {\cal Y} \ne \emptyset$.
   DeMorgan's law (\ref{eq:DeMorgansLaws}) yields 
   \begin{displaymath}
     {\cal Y}^{\perp} = \left[ \mathrm{int}(\sigma_1)\cap
       \mathrm{int}(\sigma_2)\right]^{\perp}
     = \mathrm{int}(\sigma_1)^{\perp}\cup^{\perp\perp}
     \mathrm{int}(\sigma_2)^{\perp}
     \ne \emptyset,
   \end{displaymath}
    which, together with $\beta\subseteq \sigma_1$,
    imply ${\cal N}_r(p)\cap {\cal Y}^{\perp} \ne \emptyset$.
   By Corollary \ref{coro:brdPtOfRegularSets},
    we have $\beta\subset \partial {\cal Y}$.
   Conversely,
    $\beta\not\subset \mathrm{int}(\sigma_2)$,
    $\beta\subset \mathbb{R}^2\setminus\sigma_2$, and 
    $\beta\subseteq \sigma_1$ imply that
     ${\cal N}_r(q)\cap {\cal Y} = \emptyset$
     for all $q\in \beta$.
   These arguments are illustrated
    in Figure \ref{fig:segmentOnBdry}.
%
   % By the condition $\beta\subseteq \sigma_1$ and Definition
   % \ref{def:interior}, 
   %  the left side of $\beta$ contains points of
   %  $\mathrm{int}(\sigma_1)$
   %  and the right side of $\beta$ contains points of
   %  $\mathrm{int}(\sigma_1)^{\perp}$.
   % The condition $\beta\subset \mathrm{int}(\sigma_2)$
   %  implies that both sides of $\beta$ contains points of
   %  $\mathrm{int}(\sigma_2)$.
   % Hence at the left side of $\beta$ 
   %  we have points of
   %  $\mathrm{int}(\sigma_1)\cap \mathrm{int}(\sigma_2) = {\cal Y}$.
   % At the right side of $\beta$ are
   %  points of
   %  $\mathrm{int}(\sigma_1)^{\perp}\cap \mathrm{int}(\sigma_2)$,
   %  the non-emptiness of which implies 
   %  $\mathrm{int}(\sigma_1)^{\perp}\ne \emptyset$;
   %  this further leads to
   % The proof is then completed by Corollary \ref{coro:brdPtOfRegularSets}.
 \end{proof}

 \begin{figure}
   \centering
   \includegraphics[width=0.45\linewidth]{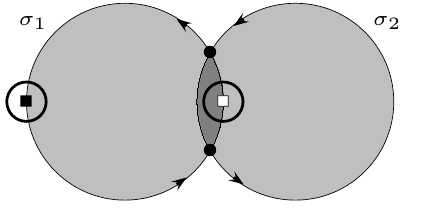}
   \caption{%The criteria of checking whether
     %a curve segment $\beta$ is on the boundary
    In proving Lemma \ref{lem:bryEdgeRequirement}, we 
     consider ${\cal Y}:=\mathrm{int}(\sigma_1)\cap \mathrm{int}(\sigma_2)$
     where $\sigma_1$ and $\sigma_2$ are two oriented Jordan curves.
    By Corollary \ref{coro:brdPtOfRegularSets},
     $\beta\subset \partial {\cal Y}$ if and only if
     for any point $p\in \beta$ (the open square),
     any sufficiently small neighborhood of $p$
     contains both points in ${\cal Y}$ and ${\cal Y}^{\perp}$.
    On the other hand, 
     if a point $q$ (the filled square)
     is not in $\mathrm{\sigma_2}$,
     then it is definitely not on the boundary of ${\cal Y}$.
   }
 \label{fig:segmentOnBdry}
\end{figure}

% Similar results for realizable spadjors exist
%  in Corollary \ref{coro:bryEdgeRequirementForest}.

Hereafter we write
 the union of all Jordan curves in a realizable spadjor as
\begin{equation}
  \label{eq:pointSetOfSpadjorForest}
  P({\cal J}) := \bigcup_{\gamma_i\in {\cal J}} \gamma_i, 
\end{equation}
 which is clearly a subset of $\mathbb{R}^2$.
%Similar notations apply to sets of paths.

 \begin{corollary}
   \label{coro:bryEdgeRequirementForest}
   Denote 
   ${\cal Y}:=\rho({\cal J})\cap \rho({\cal K})$
   where ${\cal J}$ and ${\cal K}$ are two realizable spadjors.
   For a curve $\beta$ satisfying 
    $\beta\subseteq P({\cal J})$
    and $\beta\subset \mathbb{R}^2\setminus P({\cal K})$,
    we have $\beta\subseteq \partial {\cal Y}$ 
    if and only if $\beta\subset \rho({\cal K})$.
   For a curve $\beta$ satisfying 
     $\beta\subseteq P({\cal J})\cap P({\cal K})$,
     we have $\beta\subseteq \partial {\cal Y}$ 
     if and only if 
     the direction of $\beta$
     induced from the orientation of ${\cal J}$
     is the same as that
     induced from the orientation of ${\cal K}$.
   \end{corollary}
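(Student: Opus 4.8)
The plan is to reduce the corollary to Lemma \ref{lem:bryEdgeRequirement} by exploiting the \emph{local} nature of the boundary condition supplied by Corollary \ref{coro:brdPtOfRegularSets}. The starting observation is that, by Corollary \ref{coro:JordanCurveRepGeneral}, the boundary of the Yin set represented by a realizable spadjor is exactly the union of its constituent Jordan curves; that is, $\partial\rho({\cal J}) = P({\cal J})$ and $\partial\rho({\cal K}) = P({\cal K})$. Because $\beta$ arises as a single path of a segmented spadjor and the Jordan curves in a spadjor are pairwise almost disjoint, its relative interior avoids the finite intersection set $V$, so $\beta$ lies inside one oriented Jordan curve $\sigma_1\in{\cal J}$ and its interior points are manifold points of $\partial\rho({\cal J})$, while its two endpoints lie in $V$. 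Since $\partial{\cal Y}$ is closed, it suffices to verify the claimed membership $\beta\subseteq\partial{\cal Y}$ at the manifold points in the relative interior of $\beta$ and then pass to the endpoints by closure.

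Fix such a manifold point $p$. By Lemma \ref{lem:localTopologyOfBdPtOfYinSet}(a) with $n_c(p)=1$, a sufficiently small ball ${\cal N}_r(p)$ meets only the single curve $\sigma_1$, and by the orientation convention of Theorem \ref{thm:uniqueCases} together with Definition \ref{def:interior} the set $\rho({\cal J})$ coincides inside ${\cal N}_r(p)$ with $\mathrm{int}(\sigma_1)$. For the first statement, $\beta\subset\mathbb{R}^2\setminus P({\cal K})$ means $\beta$ avoids $\partial\rho({\cal K})$, so the connected set $\beta$ lies entirely in the open set $\rho({\cal K})$ or entirely in its exterior $\rho({\cal K})^{\perp}$; in either case $\rho({\cal K})$ fills, or is absent from, a whole neighborhood of $p$. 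Consequently, inside ${\cal N}_r(p)$ the Yin set ${\cal Y}=\rho({\cal J})\cap\rho({\cal K})$ equals either $\mathrm{int}(\sigma_1)$ when $\beta\subset\rho({\cal K})$, or $\emptyset$ when $\beta\subset\rho({\cal K})^{\perp}$. Applying Corollary \ref{coro:brdPtOfRegularSets}, the former forces $p\in\partial{\cal Y}$ and the latter $p\notin\partial{\cal Y}$, which is exactly the asserted equivalence with $\beta\subset\rho({\cal K})$, mirroring the first part of Lemma \ref{lem:bryEdgeRequirement}.

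For the second statement, $\beta\subseteq P({\cal J})\cap P({\cal K})$ places $\beta$ on a Jordan curve $\sigma_2\in{\cal K}$ as well. Arguing as above, $\rho({\cal K})$ coincides with $\mathrm{int}(\sigma_2)$ inside a small ball around a manifold point $p\in\beta$, so ${\cal Y}$ locally coincides with $\mathrm{int}(\sigma_1)\cap\mathrm{int}(\sigma_2)$. The claim then follows verbatim from the second part of Lemma \ref{lem:bryEdgeRequirement}, since the direction induced on $\beta$ by the orientation of ${\cal J}$ is precisely that induced by $\sigma_1$, and likewise for ${\cal K}$ and $\sigma_2$. Passing from the manifold points to the endpoints in $V$ by closedness of $\partial{\cal Y}$ completes the argument.

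I expect the main obstacle to be the careful justification that the \emph{global} Yin sets $\rho({\cal J})$ and $\rho({\cal K})$ are locally indistinguishable from the interiors of the single Jordan curves $\sigma_1$ and $\sigma_2$ at a manifold point of $\beta$ — in other words, that the contributions of all other curves in the two spadjors disappear in a small enough neighborhood. This hinges on Lemma \ref{lem:localTopologyOfBdPtOfYinSet} guaranteeing $n_c(p)=1$ there, on the orientation consistency established in Theorem \ref{thm:uniqueCases}, and on confining attention to manifold points so that the non-manifold vertices collected in $V$ do not interfere.
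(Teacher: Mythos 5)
Your proof is correct and follows the route the paper intends: the paper states this corollary without proof as the spadjor-level analogue of Lemma \ref{lem:bryEdgeRequirement}, resting on the local boundary criterion of Corollary \ref{coro:brdPtOfRegularSets} together with the identification $\partial\rho({\cal J})=P({\cal J})$, which is exactly the machinery you deploy. Your additional bookkeeping --- localizing at manifold points where $n_c(p)=1$ so that $\rho({\cal J})$ and $\rho({\cal K})$ coincide locally with $\mathrm{int}(\sigma_1)$ and $\mathrm{int}(\sigma_2)$, and absorbing the finitely many improper intersections via closedness of $\partial{\cal Y}$ --- simply makes explicit the global-to-local reduction that the paper leaves implicit.
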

 
% The following definition
%  is the core of our Boolean algorithm.
% see Figure \ref{fig:pastingMap} for an illustration.

 \begin{definition}
   \label{def:MRS}
   The \emph{meet of two realizable spadjors}
    ${\cal J}$ and ${\cal K}$
    is a binary operation
    $\wedge: \mathbb{J}\times\mathbb{J}\rightarrow\mathbb{J}$
    defined as
    \begin{equation}
      \label{eq:MRS}
      {\cal J} \wedge {\cal K} = \left\{
        \begin{array}{ll}
          \Zero  & \textrm{if } {\cal K}=\Zero;
          \\
          {\cal J}  & \textrm{if } {\cal K}=\One;
          \\
          S_V^{-1}(E) & \textrm{otherwise}, 
        \end{array}
        \right.
    \end{equation}
    where the pasting map $S_V^{-1}$
    is defined in Lemma \ref{lem:segmentationMapInverse},
    and the directed multigraph $(V,E)$ is constructed as follows.
   \begin{enumerate}[(MRS-a)]
   \item The set ${\cal I}:=P({\cal J}) \cap P({\cal K})$
     may contain paths and isolated points.
    Initialize $V$ as an empty set;
     add into $V$ all path endpoints and isolated points 
     in ${\cal I}$.
  \item Cut ${\cal J}$ with points in $V$
    and we have a set of paths
    $\{\beta_i\}=S_V({\cal J})$.
    Initialize $E$ as an empty set. 
  \item For each $\beta_i$,
    add it to $E$ if $\beta_i$ minus its endpoints is contained
    in $\rho({\cal K})$,
    or, if there exists $\beta_j\subset P({\cal K})$
    such that $\beta_j=\beta_i$ and they have the same direction.
    In particular, if $\beta_i$ is a Jordan curve
    that satisfies either of the above conditions,
    we insert $\beta_i$ as a self-loop into $E$.
    % we pick an arbitrary point $p\in \beta_i$,
    % add $p$ as a vertex into $V$,
    % and then insert $\beta_i$ as a self-loop into $E$.
  \item For each $\beta_j\subset S_V({\cal K})$,
    add it to $E$ if $\beta_j$ minus its endpoints
    is contained in $\rho({\cal J})$.
    % If $\beta_j$ is a Jordan curve,
    % it is handled similarly as in the previous step.
  \end{enumerate}
  \end{definition}

  \begin{lemma}
    \label{lem:meetHomo}
    The meet operation in Definition \ref{def:MRS}
     satisfies 
%     is a homomorphism, i.e., 
     \begin{equation}
       \label{eq:meetHomo}
       \forall {\cal J}, {\cal K}\in \mathbb{J},
       \qquad \rho({\cal J} \wedge {\cal K})
       = \rho({\cal J}) \cap \rho({\cal K}).
     \end{equation}
  \end{lemma}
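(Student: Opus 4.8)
The plan is to leverage closure of the Yin space under intersection: by Theorem \ref{thm:YinSetsFormABooleanAlgebra} the set ${\cal Y} := \rho({\cal J}) \cap \rho({\cal K})$ is itself a Yin set, and by Corollary \ref{coro:uniqueRepOfYinSets} it admits a \emph{unique} realizable-spadjor representation. Hence it suffices to show that the realizable spadjor $S_V^{-1}(E)$ produced by Definition \ref{def:MRS} is exactly that representation, i.e. that the oriented paths collected in $E$ are precisely $\partial {\cal Y}$, cut at the vertices $V$ and oriented so that ${\cal Y}$ lies to the left. Once this is established, Lemma \ref{lem:segmentationMapInverse} guarantees that the pasting map reassembles these paths into the unique realizable spadjor for ${\cal Y}$, whence $\rho\left(S_V^{-1}(E)\right) = {\cal Y} = \rho({\cal J}) \cap \rho({\cal K})$.

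First I would dispose of the trivial cases in (\ref{eq:MRS}). For ${\cal K} = \Zero$ we have $\rho({\cal J}) \cap \rho(\Zero) = \rho({\cal J}) \cap \emptyset = \emptyset = \rho(\Zero)$, and for ${\cal K} = \One$ we have $\rho({\cal J}) \cap \mathbb{R}^2 = \rho({\cal J})$; both agree with the definition. For the remaining case the first reduction is the set-theoretic inclusion $\partial {\cal Y} \subseteq \partial\rho({\cal J}) \cup \partial\rho({\cal K}) = P({\cal J}) \cup P({\cal K})$, which follows from the standard fact $\partial(A \cap B) \subseteq \partial A \cup \partial B$ together with the identification of the boundary of a Yin set with the point set of its boundary Jordan curves (Corollary \ref{coro:JordanCurveRepGeneral}). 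Thus every boundary path of ${\cal Y}$ comes either from $P({\cal J})$ alone, from $P({\cal K})$ alone, or from the overlap $P({\cal J}) \cap P({\cal K})$.

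The heart of the argument is to match the selection rules (MRS-c) and (MRS-d) against Corollary \ref{coro:bryEdgeRequirementForest} and its mirror image obtained by swapping ${\cal J}$ and ${\cal K}$. A path $\beta \in S_V({\cal J})$ lying off $P({\cal K})$ belongs to $\partial {\cal Y}$ exactly when $\beta \subset \rho({\cal K})$, which is the first condition imposed in (MRS-c); the symmetric statement for $\beta \in S_V({\cal K})$ is exactly (MRS-d). For a path in the overlap $P({\cal J}) \cap P({\cal K})$, the corollary says $\beta \subseteq \partial {\cal Y}$ iff the direction induced by ${\cal J}$ agrees with that induced by ${\cal K}$, which is the second clause of (MRS-c). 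I would then check orientation coherence: since each boundary Jordan curve of a Yin set carries the orientation that keeps the set on its left (Definition \ref{def:interior}), a selected path inherited from ${\cal J}$ with $\beta \subset \rho({\cal K})$ keeps $\rho({\cal J})$, and hence ${\cal Y}$, on its left; the analogous statement holds for paths from ${\cal K}$ and for the direction-matched overlap paths. Consequently $E$ is exactly $\partial {\cal Y}$, cut at $V$ and coherently oriented, while the isolated points of ${\cal I}$ adjoined to $V$ in (MRS-a) contribute no edges, as they must not.

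I expect the main obstacle to be the degenerate overlap configurations, where $P({\cal J})$ and $P({\cal K})$ share curve segments rather than meeting transversally, together with the attendant orientation bookkeeping. These are precisely the improper-intersection situations of Figure \ref{fig:pastingMap}, in which a single loop emerging from the pasting procedure may have to be split into several Jordan curves in step (S2R-c). Verifying that the direction-matching clause in (MRS-c) retains exactly the shared segments that survive on $\partial {\cal Y}$, and that the resulting directed multigraph $(V,E)$ has equal indegree and outdegree at every vertex so that Lemma \ref{lem:segmentationMapInverse} genuinely applies, is the delicate part; once the orientations are shown to be coherent, the conclusion follows from the uniqueness in Corollary \ref{coro:uniqueRepOfYinSets}.
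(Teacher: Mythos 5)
Your proposal is correct and follows essentially the same route as the paper's proof: both use Corollary \ref{coro:bryEdgeRequirementForest} to show that the selection rules (MRS-c) and (MRS-d) collect exactly the paths on $\partial\left(\rho({\cal J})\cap\rho({\cal K})\right)$, each inserted once, and then conclude via the pasting map of Lemma \ref{lem:segmentationMapInverse} and the uniqueness of the realizable-spadjor representation in Corollary \ref{coro:uniqueRepOfYinSets}. Your additional checks (the trivial cases ${\cal K}=\Zero,\One$, the inclusion $\partial(A\cap B)\subseteq\partial A\cup\partial B$, and the orientation coherence making the pasting map applicable) are sound elaborations of steps the paper leaves implicit.
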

  \begin{proof}
    Denote ${\cal Y}_{\cap}:=\rho({\cal J}) \cap \rho({\cal K})$.
    It follows from Corollary \ref{coro:bryEdgeRequirementForest}
     that each path $\beta_i\subset S_V({\cal J})$
     is added to $E$ in step (MRS-c) 
     if and only if 
     $\beta_i\subset \partial {\cal Y}_{\cap}$;
     similarly, 
     each curve $\beta_j\subset S_V({\cal K})$
     is added to $E$ in step (MRS-d) 
     if and only if 
     $\beta_j\subset \partial {\cal Y}_{\cap}$.
    Hence the union of the vertices and edges
     in $G$ constitute the boundary of ${\cal Y}_{\cap}$.
    Furthermore, by the difference between (MRS-c) and (MRS-d), 
     each edge on $\partial{\cal Y}_{\cap}$ is inserted into $E$ only once.
    Therefore,
     $E$ contains and only contains
     points on $\partial{\cal Y}_{\cap}$.
    The proof is then completed by Lemma
    \ref{lem:segmentationMapInverse}
    and Corollary \ref{coro:uniqueRepOfYinSets}.
  \end{proof}

\subsection{The Yin space $\mathbb{Y}$
 and the Jordan space $\mathbb{J}$
 are isomorphic}
\label{sec:isomorphism}

The join operation can be expressed by the meet operation
 and the complementation operation;
 this is also true for all other Boolean operations.
 
 \begin{definition}
   \label{def:join}
   The \emph{join of two realizable spadjors} ${\cal J}$ and ${\cal K}$
    is a binary operation
    $\vee: \mathbb{J}\times\mathbb{J}\rightarrow\mathbb{J}$
    defined as
    \begin{equation}
      \label{eq:join}
      \forall {\cal J}, {\cal K}\in \mathbb{J},\qquad
      {\cal J} \vee {\cal K}
      := ({\cal J}' \wedge {\cal K}')'.
    \end{equation}
  \end{definition}

The following theorem is the theoretical culmination of this paper.

\begin{theorem}
  \label{thm:isomorphism}
  The Boolean algebras $(\mathbb{J},\vee,\wedge,',\Zero,\One)$
  and
  $(\mathbb{Y},\cup^{\perp\perp},\cap,^{\perp},\emptyset,\mathbb{R}^2)$
  are isomorphic 
  under the boundary-to-interior map
  $\rho$ in Definition \ref{def:boundaryToInteriorMap}.
\end{theorem}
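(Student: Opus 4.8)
The plan is to promote the three facts already in hand---bijectivity of $\rho$ (Theorem \ref{thm:b2iMapIsBijective}), compatibility with complementation (Lemma \ref{lem:complementationHomo}), and compatibility with meet (Lemma \ref{lem:meetHomo})---into a full Boolean-algebra isomorphism by checking only the two remaining structural requirements: that $\rho$ carries the nullary operations and the join of $\mathbb{J}$ to their counterparts in $\mathbb{Y}$. Since every topological and combinatorial difficulty has already been absorbed into the earlier lemmas on the cutting and pasting maps and the boundary-selection criteria, what remains is pure assembly.

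First I would dispose of the constants: Definition \ref{def:boundaryToInteriorMap} gives $\rho(\Zero)=\emptyset$ and $\rho(\One)=\mathbb{R}^2$ directly, so $\rho$ matches the nullary operations. The only computation with any content is the join. Using the definition $\mathcal{J}\vee\mathcal{K}=(\mathcal{J}'\wedge\mathcal{K}')'$ from Definition \ref{def:join} and chaining Lemmas \ref{lem:complementationHomo} and \ref{lem:meetHomo},
\[
  \rho(\mathcal{J}\vee\mathcal{K})
  = \bigl(\rho(\mathcal{J}')\cap\rho(\mathcal{K}')\bigr)^{\perp}
  = \bigl((\rho(\mathcal{J}))^{\perp}\cap(\rho(\mathcal{K}))^{\perp}\bigr)^{\perp}.
\]
Because $\mathbf{Y}$ is a Boolean algebra (Theorem \ref{thm:YinSetsFormABooleanAlgebra}), De Morgan's law (\ref{eq:DeMorgansLaws}) holds there with complement $^{\perp}$, meet $\cap$, and join $\cup^{\perp\perp}$; thus $(A\cup^{\perp\perp}B)^{\perp}=A^{\perp}\cap B^{\perp}$, and applying $^{\perp}$ to both sides together with the involutivity $(X^{\perp})^{\perp}=X$ of regular open Yin sets yields $(A^{\perp}\cap B^{\perp})^{\perp}=A\cup^{\perp\perp}B$. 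Hence $\rho(\mathcal{J}\vee\mathcal{K})=\rho(\mathcal{J})\cup^{\perp\perp}\rho(\mathcal{K})$, so $\rho$ respects the join as well.

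Finally I would close by transport of structure. At this point $\rho$ is a bijection sending $\wedge,\vee,{}',\Zero,\One$ on $\mathbb{J}$ to $\cap,\cup^{\perp\perp},{}^{\perp},\emptyset,\mathbb{R}^2$ on $\mathbb{Y}$. Since $\mathbf{Y}$ satisfies the Boolean-algebra axioms (BA-1), (BA-2), (LA-1), and the distributive laws (\ref{eq:distributiveLaws}), each such identity can be pulled back verbatim along the bijection $\rho$: given an axiom in $\mathbb{Y}$, replace every element by its $\rho$-preimage and every operation by its $\mathbb{J}$-counterpart. This argument does double duty---it simultaneously establishes that $(\mathbb{J},\vee,\wedge,{}',\Zero,\One)$ is a Boolean algebra and that $\rho$ is an isomorphism between the two.

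I do not expect a genuine obstacle in this theorem; the De Morgan manipulation for the join is the only step requiring a moment's thought, and it is routine once the involutivity of $^{\perp}$ on regular open sets is invoked. The substantive work---the bijectivity in Theorem \ref{thm:b2iMapIsBijective} and the operation-preserving identities in Lemmas \ref{lem:complementationHomo} and \ref{lem:meetHomo}, which encode the correctness of the pasting map and the boundary-edge selection---has already been carried out, so the present statement is the clean payoff rather than the crux.
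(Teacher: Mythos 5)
Your proposal is correct and takes essentially the same route as the paper, whose proof is a one-line citation of Lemmas \ref{lem:complementationHomo} and \ref{lem:meetHomo}, Definitions \ref{def:boundaryToInteriorMap} and \ref{def:join}, and DeMorgan's law (\ref{eq:DeMorgansLaws}), combined implicitly with the bijectivity from Theorem \ref{thm:b2iMapIsBijective}. Your De Morgan computation for the join and the transport-of-structure paragraph simply spell out what the paper leaves implicit---including the point, glossed over in the paper, that $(\mathbb{J},\vee,\wedge,',\Zero,\One)$ itself satisfies the Boolean-algebra axioms by pullback along $\rho$.
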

\begin{proof}
  This follows directly from 
  Lemmas \ref{lem:complementationHomo} and \ref{lem:meetHomo},
  Definitions \ref{def:boundaryToInteriorMap} and \ref{def:join},
  and DeMorgan's law (\ref{eq:DeMorgansLaws}).
\end{proof}

As the desired consequence,
 we have reduced the two-dimensional problems
 $\cup^{\perp\perp}$, $\cap$, and $^{\perp}$
 to the one-dimensional problems
 $\vee$, $\wedge$, and $'$.

\subsection{Algorithmic implementation}
\label{sec:algor-impl}

 \begin{figure}
   \centering
   \subfigure[${\cal P}^{\perp}$:
   exterior of the panda 
   ${\cal P}$
   in Figure \ref{fig:panda}
   obtained by complementation
   in Definition \ref{def:CJJ}.
   ]{
     \includegraphics[width=0.45\linewidth]{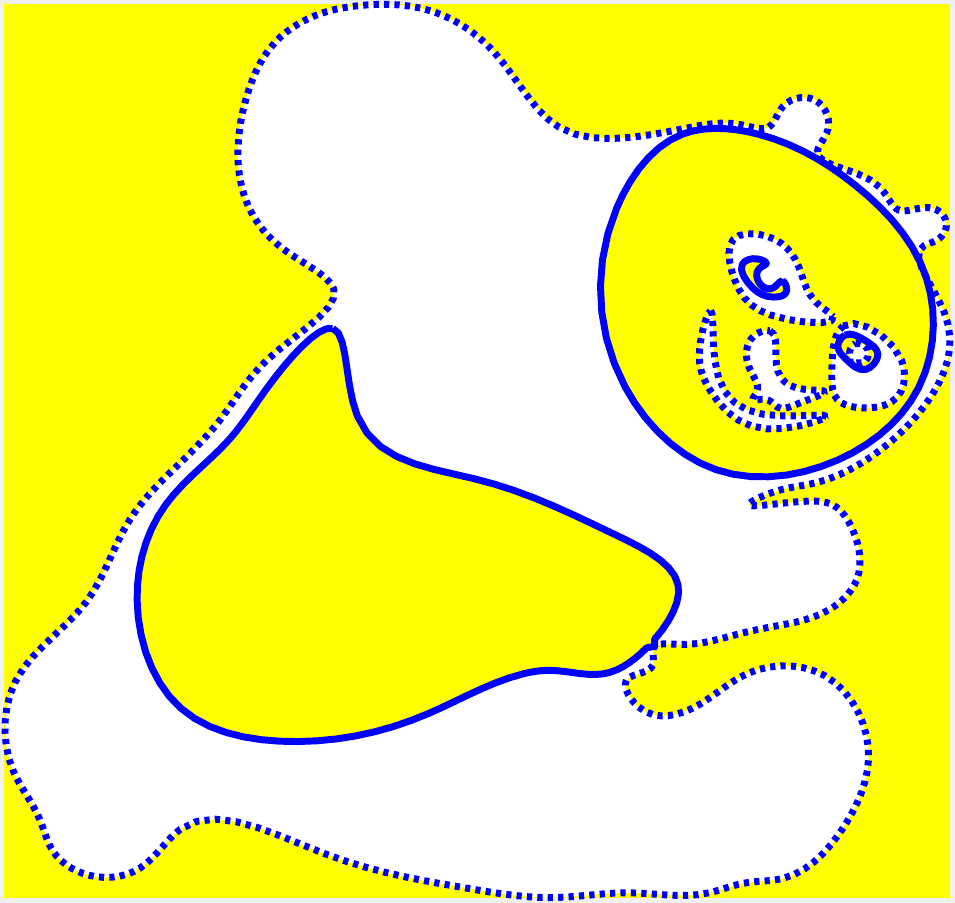}
   }
   \subfigure[a Mickey mouse modeled as a Yin set ${\cal M}$]{
     \includegraphics[width=0.45\linewidth]{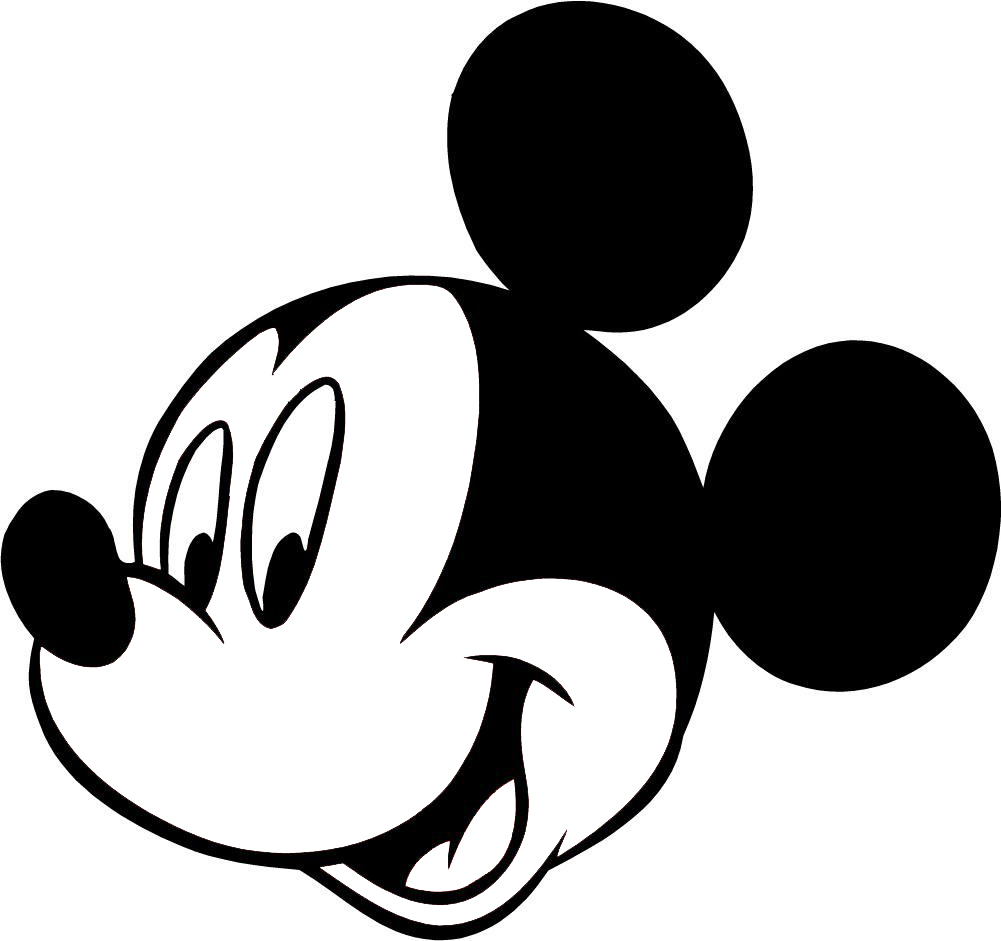}
   }

   \subfigure[${\cal M}\cap {\cal P}$:
   intersection of ${\cal M}$ and ${\cal P}$
   obtained by the meet operation in Definition \ref{def:MRS}.
   ]{
     \includegraphics[width=0.46\linewidth]{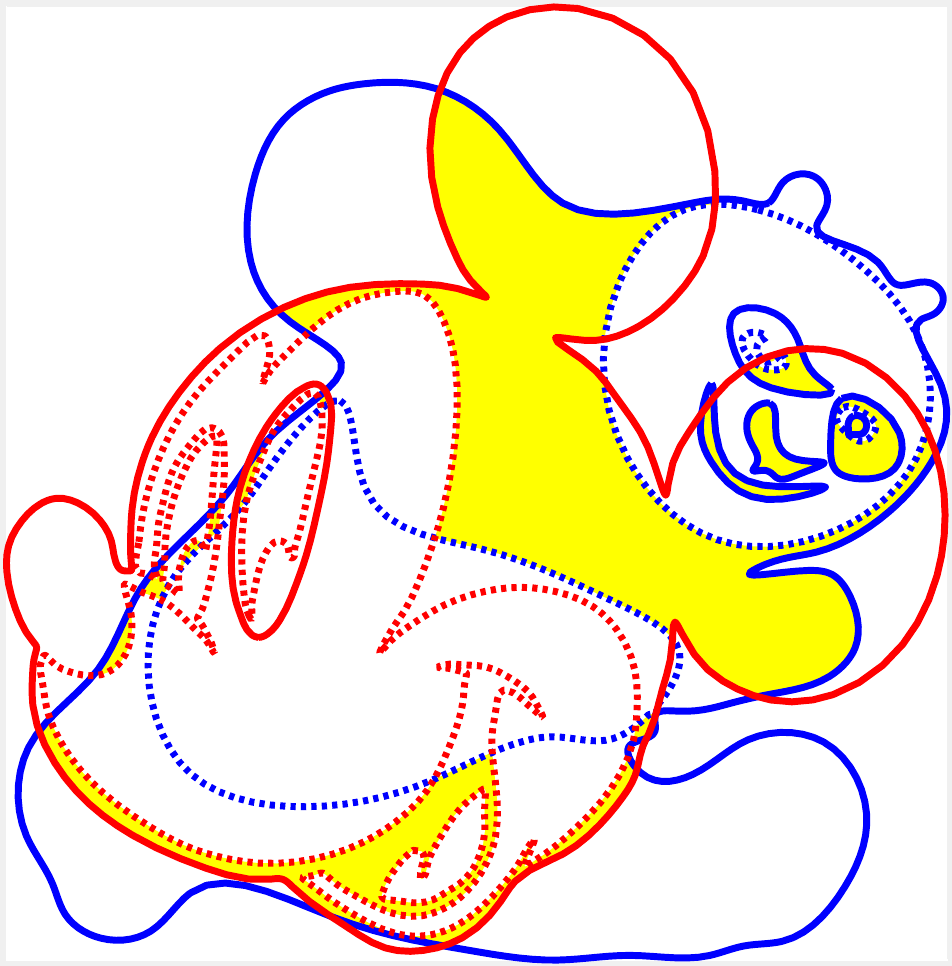}
   }
   \hfill
   \subfigure[${\cal M}\cup^{\perp\perp}{\cal P}$:
   regularized union of ${\cal M}$ and ${\cal P}$
   obtained by the join operation in Definition \ref{def:join}.
   ]{
     \includegraphics[width=0.46\linewidth]{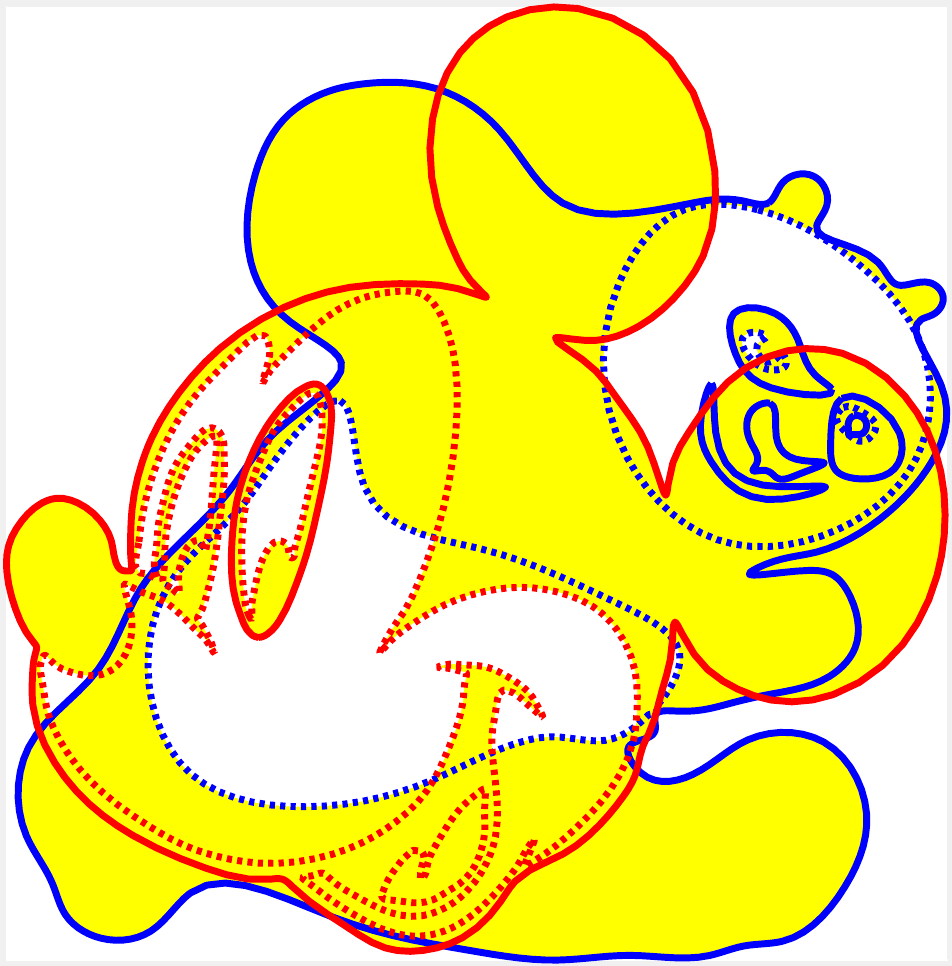}
   }
   \caption{Results of testing Boolean algorithms
      on Yin sets with complex topology and geometry.
     In subplots (a), (c), and (d),
      % illustrate the complementation,
      % the intersection, and the union operations
      % in Definitions
      % \ref{def:CJJ},
      % \ref{def:MRS}, and \ref{def:join},
      % respectively.
      a solid line represents a positively
      oriented Jordan curve,
      a dotted line a negatively
      oriented Jordan curve,
      and a shaded region the result of a Boolean operation.
     There are several improper intersections
      of the Jordan curves in the realizable spadjor
      that represents the panda.
   }
 \label{fig:panda-mickey}
\end{figure}

Purely algebraic and constructive as they are,
 Definitions %\ref{def:conversionFromSF2RS}, \ref{lem:RS2AS},
 \ref{def:CJJ}, \ref{def:MRS}, and \ref{def:join}
 already constitute a complete set of Boolean algorithms on Yin sets.
We implement these algorithms
% in \texttt{C++}
 and perform a variety of test cases
 to validate our theory and verify our implementation;
 some fun examples are shown in Figure \ref{fig:panda-mickey}.
% Our \texttt{C++} package has also been submitted 
%  as a supplement
%  to increase the utility of this work.

In our implementation, %our \texttt{C++} implementation,
 the data structure of Yin sets is
 a straightforward orchestration of the realizable spadjor,
 i.e. a set of point arrays
 and each point array represent a polygon,
 with its orientation indicated by the direction of the points.
In particular,
 the user does not need to specify the pairwise inclusion relations
 of the boundary Jordan curves,
 since the computation of this information is
 encapsulated inside the algorithms.
So long as each input Yin set
 is indeed a realizable spadjor,
 the algorithm returns the correct result.
These designs make the software interface 
 simple, flexible, and user-friendly.

As one distinguishing feature,  
 the user can control the uncertainty 
 of Boolean operations of our implementation.
\emph{Given a small positive real number $\epsilon$,
 we define two points to be the same point
 if their distance is smaller than $\epsilon$}.
 % and two line segments to be the same
 % if each of the two endpoint pairs is the same point.
A consistent enforcement of this definition and its implications
 across the entire package
 makes our implementation robust
 and provides an effective mechanism to handle various degenerate cases
 that characterize topological changes.

It is well known in the community of computational geometry
 that intersecting line segments
 might lead to unavoidable self-inconsistencies
 and cause a program to abort at the run time
 \cite{kettner08:_class_examp_of_robus_probl}.
The mathematical core of this difficulty
 is the potentially arbitrary ill-conditioning
 of intersecting line segments
 in an unlimited range of length scales.
In the context of numerically simulating multiphase flows,
 there always exists a length scale $h$
 below which finer details are not needed.
Hence this uncertain parameter $\epsilon$
 is not only a device of flexibility and convenience, 
 but more importantly a simple fix
 of the aforementioned robustness problem in computational geometry.

In calculating intersections of a set of line segments,
 we modify the standard line sweep algorithm
 in \cite{bentley79:_algor}\cite[Chap. 2]{berg08:_comput_geomet}
 to enforce this uncertainty criterion.
We also need to frequently determine whether or not
 a point belongs to the interior, the exterior,
 or the boundary of a polygon;
 for this purpose,
 we expand the simple algorithm
 in \cite[Sec. 7.4]{orourke98:_comput_geomet_in_c}
 so that it applies not only to simple polygons
 but also to realizable spadjors.
These algorithmic details of our implementation
 will be reported in a separate paper.
% This paper is accompanied by the supplement \texttt{C++} package
%  \texttt{YinSets2D.tar.gz},
%  by which
%  an interested reader is invited
%  to reproduce the test results in Figure \ref{fig:panda-mickey},
%  play with other tests,
%  and find more details.

%%% Local Variables: 
%%% mode: latex
%%% TeX-master: "../YinSets2D"
%%% End: 

%  LocalWords:  Jonpi bijection Jonpis spadjor Spadjor spadjors homomorphism
%  LocalWords:  multigraph injective disjointness semianalytic poset Hasse
%  LocalWords:  isomorphism piecewise complementation multiphase

\section{Conclusion}
\label{sec:conclusion}

We have introduced the problem of fluid modeling
 in multiphase flows as a counterpart of solid modeling
 in CAD,
 and have proposed to solve this problem
 via the Yin space,
 a topological space equipped
 with a simple, efficient, and complete Boolean algebra.
% Simple and efficient boolean algorithms
%  are achieved via an isomorphism
%  from the Yin space to the Jordan space
%  of posets of oriented Jordan curves, 
%  so that the intersection of two Yin sets
%  only involves checking the relative position
%  of a point to a polygon
%  and calculating intersections of line segments.
%As prominent features of the proposed approach,
Under this framework,
 topological changes of deforming Yin sets
 can be characterized and handled naturally
 and topological information such as Betti numbers
 can be extracted in constant time.
% A supplement package that implements Yin sets 
%  in \texttt{C++} is also freely available,
%  and may be useful for studying multiphase flows
%  as well as CAD and GIS.

Several prospects for future research follow.
The theory and algorithms on the Yin space
 can be generalized to 2-manifolds in a straightforward manner.
Another generalization of Yin sets
 to three dimensions is currently a work in progress.
Finally, it would be exciting to couple Yin sets
 with high-order finite-volume methods \cite{zhang16:_GePUP}
 to form fourth- and higher-order solvers
 for simulating multiphase flows
 such as free-surface flows
 and fluid-structure interactions.

%%% Local Variables: 
%%% mode: latex
%%% TeX-master: "../YinSets2D"
%%% End: 

%  LocalWords:  isomorphism complementation

{\bf Acknowledgments.}
This work was supported by the grant with approval number 11871429
 from the national natural science foundation of China.
The authors thank Difei Hu for digitizing the panda image.
 % and Zhixuan Li
 % for implementing the algorithms in \texttt{C++}.

 % would also like to thank two anonymous referees,
 % whose comments lead to an improvement
 % of the exposition.

\bibliography{bib/YinSets2D}
\bibliographystyle{siam}

\end{document}